%% file: main.tex
\documentclass[a4paper,fleqn]{cls/cas-dc}

\usepackage[numbers]{natbib}

\usepackage{amsthm,dsfont,mathtools}
\usepackage{nicematrix}
\usepackage{array,tabularx,adjustbox,multirow,booktabs}
\usepackage{textcomp}
\usepackage{stfloats}
\usepackage{url}
\usepackage{verbatim}
\usepackage{algorithmic}
\usepackage[ruled,vlined]{algorithm2e}
\usepackage[font=footnotesize]{caption}
\usepackage[font=footnotesize]{subcaption}
\usepackage{todonotes}
\usepackage{stackengine}
\usepackage{dirtytalk}

\DeclarePairedDelimiter\ceil{\lceil}{\rceil}

\allowdisplaybreaks

\DeclareMathOperator{\Quant}{Quantile}
\DeclareMathOperator{\dist}{dist}

\newtheorem{theorem}{Theorem}[section]

\newtheorem{lemma}[theorem]{Lemma}
\newtheorem{Assumption}[theorem]{Assumption}

\newtheorem{remark}[theorem]{Remark}
\theoremstyle{definition}
\newtheorem{definition}[theorem]{Definition}
\newtheorem{problem}{Problem}
\newtheorem*{problem*}{Problem}
\newtheorem{subproblem}{Problem}[problem]

\newcommand\xqed[1]{%
  \leavevmode\unskip\penalty9999 \hbox{}\nobreak\hfill
  \quad\hbox{#1}}

\newcommand\resultend{\xqed{$\bullet$}}

\newcommand{\longthmtitle}[1]{\mbox{}\textup{\textbf{(#1):}}}

\input{cls/sym.tex}

\begin{document}
\let\WriteBookmarks\relax
\def\floatpagepagefraction{1}
\def\textpagefraction{.001}

% Short title / running header
\shorttitle{A Unified Perspective on CP and Wasserstein DRO for Uncertainty Quantification}
\shortauthors{K. Long et al.}

% Main title
\title[mode=title]{A Unified Perspective on Conformal Prediction and Wasserstein Distributionally Robust Optimization for Uncertainty Quantification}

\author[1]{Kehan Long}[%
  % orcid=0000-0000-0000-0000,   % TODO: fill in if available
  ]
% \cormark[1]
\author[2]{Yiqi Zhao}
\author[3]{Pol Mestres}
\author[2,4]{Lars Lindemann}
\author[1]{Nikolay Atanasov}
\author[1]{Jorge Cort\'es}

\affiliation[1]{organization={Contextual Robotics Institute, University of California San Diego},
  city={La Jolla}, state={CA}, country={USA}}
  \affiliation[2]{organization={Thomas Lord Department of Computer Science, University of Southern California},
  city={Los Angeles}, state={CA}, country={USA}}
\affiliation[3]{organization={Department of Mechanical and Civil Engineering, California Institute of Technology},
  city={Pasadena}, state={CA}, country={USA}}
\affiliation[4]{organization={Automatic Control Laboratory, ETH Zürich},
  city={Zürich}, country={Switzerland}}

% \cortext[1]{Corresponding author.
%   Email: k3long@ucsd.edu}

\begin{abstract}
Uncertainty quantification from finite data is central to machine learning, optimization, and automation systems, where decisions must remain reliable under limited samples and test-time distribution shift. Conformal prediction (CP) and distributionally robust optimization (DRO) offer two complementary approaches: CP constructs data-dependent prediction sets with distribution-free finite-sample validity under exchangeability, while DRO optimizes worst-case performance over an ambiguity set around an empirical distribution. We develop a unified probabilistic perspective on CP and DRO by viewing both as ways to turn finite calibration data into a data-dependent quantile estimator that a test score falls below with high probability. From this perspective, CP and DRO correct the empirical quantile along two coordinates of the same family of estimators: CP inflates the quantile level, whereas DRO shifts the quantile value through an ambiguity radius. Both methods provide the same calibration-conditional guarantee for the true distribution, requiring the target coverage to hold with high probability over the calibration sample. Their constructions differ, however: CP uses a closed-form, distribution-free level correction, while DRO uses a value-space correction whose certified radius depends on properties of the unknown distribution and additionally guarantees coverage uniformly over the ambiguity set. This distinction emerges in the tails of the score distribution. Because CP relies on sparse upper-tail order statistics of the calibration samples, its level inflation barely moves the estimator when those samples are dense near the target quantile but overshoots when they are sparse, whereas a well-chosen DRO radius corrects in value space and may avoid this overshoot. We further extend the comparison to test-time distribution shift and derive CP- and DRO-based estimators under the L\'evy--Prokhorov model that satisfy the same two-level guarantee. We validate these findings across image classification, multiple-choice question answering, and autonomous-driving trajectory prediction.
\end{abstract}

\begin{keywords}
Uncertainty quantification \sep Distribution shift \sep Conformal prediction \sep Distributionally robust optimization  
\end{keywords}

\maketitle

\input{tex/Intro.tex}

\input{tex/Related_Work.tex}

\input{tex/ProblemFormulation.tex}

% \input{tex/Optimization.tex}
% \input{tex/Composition_Functions}
\input{tex/Conclusion.tex}

%==================================================================%
% References
\bibliographystyle{cas-model2-names}
\bibliography{ref}

\end{document}

%% file: cls/sym.tex
\newcommand{\calM}{{\cal M}}

\newcommand{\calX}{{\cal X}}

\newcommand{\bbP}{\mathbb{P}}
\newcommand{\bbQ}{\mathbb{Q}}

%% file: tex/Intro.tex
\section{Introduction}
\label{sec: intro}

Modern applications in machine learning, robotics, and decision-making increasingly rely on data-driven methods. A fundamental challenge in these settings is to \emph{quantify uncertainty using a finite set of samples}. Conformal prediction (CP) \cite{shafer2008tutorial, angelopoulos2021gentle} addresses this by constructing prediction sets with finite-sample distribution-free coverage guarantees under mild assumptions, such as data exchangeability. Distributionally robust optimization (DRO) \cite{Esfahani2018DatadrivenDR, kuhn2025distributionally} instead optimizes worst-case performance over an ambiguity set of distributions, centered at the empirical distribution, yielding decisions that remain robust under distributional shift. CP and DRO originate from different communities and employ different problem formulations, leading to distinct methodologies, guarantees, and trade-offs. Nevertheless, both approaches can provide finite-sample high-coverage statistical guarantees from i.i.d. data. 

Consider a set of calibration samples and a test sample drawn from the same unknown distribution, each assigned a scalar score. The goal is to construct a threshold from the calibration scores such that the test score falls below it with a target probability. Under \emph{marginal} validity, this probability is taken jointly over the calibration and test samples. The target coverage then holds on average over possible calibration sets, but need not hold for the particular set observed. In contrast, \emph{calibration-conditional} validity requires the target coverage to hold for the calibration set actually drawn. Because a finite calibration sample does not fully characterize the tail of the underlying distribution, this cannot hold for every calibration set, and one instead requires it to hold for all but a small fraction of them. We call the resulting statement a \emph{two-level} guarantee: one level is the target coverage for the test sample, and the other is the confidence with which that coverage holds over the random calibration set.

This paper presents a comparative study of CP and DRO in the context of uncertainty quantification. We summarize their theoretical formulations, statistical properties, finite-sample behaviors, and provide side-by-side comparisons of coverage guarantees, conservativeness, and numerical performance. Our \textbf{contributions} are summarized as follows.
\begin{itemize}
    \item We develop a unified probabilistic perspective on CP and Wasserstein DRO by formulating both as finite-sample data-dependent quantile-estimation methods with calibration-conditional coverage guarantees.

    \item In the absence of test-time distribution shift, we connect CP and DRO through data-dependent quantile estimation: CP inflates the empirical quantile level, whereas DRO adds a value-space ambiguity radius. Both provide the same calibration-conditional guarantee for the true distribution, while DRO additionally certifies coverage uniformly over the realized ambiguity set. We further characterize their finite-sample and asymptotic behavior and explain how the score density near the target quantile affects conservativeness.

    \item Under test-time distribution shift, we compare CP and DRO using shift models based on Wasserstein and L\'evy--Prokhorov distances. We derive both CP and DRO estimators with calibration-conditional two-level guarantees and show how value-space and quantile-level corrections account for different forms of distribution shift.

    \item We illustrate our theoretical findings on three representative tasks: image classification, multiple-choice question answering, and autonomous-driving trajectory prediction. The experiments compare the empirical coverage and conservativeness of CP and DRO estimators, assess satisfaction of the calibration-conditional two-level guarantees, and evaluate their robustness under distribution shift.
\end{itemize}

The paper aims to provide theoretical basis and practical guidance for selecting uncertainty-quantification methods in machine learning and decision-making from finite data.

%% file: tex/Related_Work.tex
\section{Related Work}
\label{sec: related}

We review CP, DRO, and related approaches to uncertainty quantification, with an emphasis on applications in machine learning, control, and robotics.

\subsubsection*{Conformal prediction}

% wraps a predictive model to 
CP addresses uncertainty quantification when the data-generating distribution is unknown. Instead of relying on parametric assumptions, CP constructs prediction sets or intervals that achieve finite-sample coverage guarantees under the mild assumption of data exchangeability \cite{shafer2008tutorial,angelopoulos2021gentle}. The idea originates from \cite{vovk2005algorithmic} and was reformulated for regression problems by \cite{lei2018distribution}, whose split-conformal construction underlies most modern use. Beyond this marginal-validity perspective, Vovk \cite{vovk2012conditional} studied several notions of conditional validity for conformal predictors. Two are particularly relevant here. The first conditions on the training data, referred to as \emph{training-conditional validity}, and corresponds to what we call calibration-conditional validity in this paper; Bian and Barber \cite{bian2023training} characterize when such guarantees are attainable. The second conditions on the test input, for which exact distribution-free guarantees are generally unattainable. Requiring coverage conditional on arbitrary subsets of the input space can lead to highly conservative prediction sets, whereas meaningful guarantees can be recovered over suitably restricted classes of conditioning sets \cite{foygel2021limits}. Conformalized quantile regression \cite{romano2019conformalized} seeks to improve conditional adaptivity in practice by producing intervals whose widths adapt to input-dependent noise, or heteroscedasticity, while retaining marginal coverage.
The results above assume exchangeability between calibration and test data. A separate line of work relaxes this assumption, including methods for covariate shift with a known likelihood ratio \cite{tibshirani2019covariate} and bounds on the coverage gap under departures from exchangeability \cite{barber2023beyond}. Angelopoulos et al. \cite{angelopoulos2024theoretical} provide a unified theoretical treatment of conformal prediction that also covers distribution-shift settings. Several seminal works \cite{singh2024uncertainty,zhou2025conformal,shorinwa2025survey, karimi2023quantifying, lindemann2025formal, strawn2023conformal} have established CP as a popular tool for reliable uncertainty quantification in machine learning, control, and autonomous systems.

In machine learning, CP offers a model-agnostic approach for converting model outputs into finite-sample calibrated prediction sets. Angelopoulos et al.~\cite{angelopoulos2020uncertainty} developed CP-based uncertainty sets for image classification that remain valid while being substantially more compact than standard calibration baselines, and \cite{lu2023federated} extended CP to federated learning by introducing a weaker notion of partial exchangeability suited to heterogeneous clients. In high-stakes applications, such as clinical imaging, \cite{vazquez2022conformal,lu2022fair} highlighted the growing role of subgroup-adaptive CP coverage for fairer uncertainty quantification. More recently, CP has been adapted to large language models for multiple-choice question answering \cite{kumar2023conformal}, generative language modeling \cite{quach2023conformal}, and improved validity guarantees for LLM outputs through enhanced conformal inference procedures \cite{cherian2024large}.

CP has also been increasingly adopted in control, where it converts model and prediction uncertainty into finite-sample certificates for constraint satisfaction, safety, and verification. It has been used for distribution-free optimal control of linear stochastic systems \cite{vlahakis2024conformal} and to quantify and robustify the uncertainty of model-based controllers \cite{chee2024uncertainty}. CP has further been embedded in stochastic model predictive control \cite{fernandezzapico2025stochastic} and in perception-based control under sensor uncertainty \cite{yang2023safe_perception}. Furthermore, it has been paired with high-level specifications and verification, including signal temporal logic control \cite{yu2026signal}, conformal predictive programming for chance-constrained optimization \cite{zhao2024conformal_cc_opti}, safety filters for reinforcement learning \cite{strawn2023conformal}, and runtime verification of autonomous systems \cite{lindemann2025formal, zhao2024robust_cp_verification}.

CP has also been applied in autonomous systems and robotics to convert model predictions into finite-sample calibrated uncertainty sets for planning, safety assurance, and interaction. Lindemann et al.~\cite{lindemann2023safe} incorporated CP into MPC-based safe planning in dynamic environments by calibrating trajectory-prediction uncertainty, while \cite{sun2023conformal} used CP to quantify uncertainty in diffusion dynamics models for uncertainty-aware planning. Luo et al.~\cite{luo2024sample} leveraged conformal calibration to obtain sample-efficient safety assurances with guaranteed false-negative rates, and \cite{lekeufack2024conformal} extended this perspective from set prediction to direct calibration of autonomous decisions. Seo et al.~\cite{seo2025uncertainty} used CP to calibrate epistemic-uncertainty thresholds in latent safety filters for avoiding out-of-distribution failures in robot navigation and manipulation. In human-robot interaction, \cite{lidard2024risk} used set-valued intent prediction to achieve risk-calibrated interaction under uncertain human intent.

\subsubsection*{Distributionally robust optimization}

DRO addresses optimization under uncertainty by assuming that the true distribution lies within a specified family of distributions, known as an \emph{ambiguity set}. This is a stronger assumption than that required by CP: CP relies only on exchangeability between calibration and test data, whereas DRO requires an ambiguity set that contains the true distribution, whose construction from finite data generally requires additional distributional assumptions. DRO then optimizes against the worst-case distribution in this set, providing robustness to finite-sample estimation error and distribution shift. Typical ambiguity sets are defined through moments \cite{Parys2015monent, delage2010distributionally}, statistical divergences such as Kullback--Leibler distance \cite{Jiang2016DatadrivenCC}, or probability metrics such as the Wasserstein distance \cite{Esfahani2018DatadrivenDR, Xie2021OnDR}. Among these, Wasserstein DRO has become particularly influential because it often combines computational tractability with finite-sample guarantees. Notably, Mohajerin Esfahani and Kuhn \cite{Esfahani2018DatadrivenDR} developed a data-driven Wasserstein DRO formulation with rigorous out-of-sample guarantees when the true distribution is unknown. Two questions are central to this formulation: how to evaluate the worst case over the ambiguity set, and how to choose the size of the set. Strong-duality results \cite{blanchet2019quantifying,gao2023distributionally} address the first by replacing the infinite-dimensional optimization over distributions with a tractable dual problem. Concentration results for the empirical measure \cite{fournier2015rate} address the second by specifying the radius of a Wasserstein ambiguity ball that contains the true distribution with prescribed confidence. Comprehensive treatments are given in \cite{rahimian2022frameworks, kuhn2025distributionally}. These properties have made DRO an increasingly important tool in machine learning \cite{blanchet2019robust, chen2020distributionally, gao2023finite}, control \cite{long2023dro_lf,Dimitris_2021_TAC, hakobyan2024wasserstein}, and robotics \cite{coulson2021distributionally, ren2022distributionally_ral, hakobyan2022distributionally}, where uncertainty is often data-driven.

In machine learning, DRO has been used both for robust training and for understanding existing learning objectives. Blanchet et al.~\cite{blanchet2019robust} showed that several standard estimators, including LASSO and logistic regression, can be recast as Wasserstein DRO problems, revealing a connection between robustness and regularization. Building on this perspective, \cite{chen2020distributionally} developed a distributionally robust learning framework under Wasserstein ambiguity sets, encompassing regression, classification, and sequential decision-making problems. Chen and Paschalidis~\cite{chen2018robust} showed that Wasserstein DRO yields tractable formulations with guarantees against adversarial outliers. Gao~\cite{gao2023finite} provided finite-sample guarantees for broad classes of Wasserstein DRO learning problems without suffering from the curse of dimensionality, showing that the ambiguity radius balances empirical loss against loss variation. More recently, \cite{wu2023understanding} interpreted contrastive representation learning through the lens of DRO, showing that its training objective implicitly performs worst-case optimization over negative samples and using this to explain robustness to sampling bias.

% \marginJC{We have 1 paragraph on DRO in machine learning and 1 on DRO in robotics. I think we should also have 1 paragraph on DRO in control, e.g.,~\cite{AC-JC:20-tac,DB-JC-SM:21-tac,IY:21}, particularly of power systems~\cite{YG-KB-EDA-ZH-THS:19,BL-RJ-JLM:19,BKP-ARH-SB-DSC-AC:21}. If you chase them down, you'll find more  refs that might be relevant.}

In control, DRO has been used to design controllers that are robust to distributional uncertainty in disturbances, dynamics, and constraints. One line of work builds data-driven ambiguity sets with finite-sample guarantees for control, in cooperative \cite{AC-JC:20-tac} and time-varying \cite{DB-JC-SM:21-tac} settings, and applies them to stochastic optimal control \cite{IY:21, Parys2015monent} and to safe and stable control synthesis via robust barrier and Lyapunov certificates \cite{PM-KL-NA-JC:23-csl, long2023safe_dro_acc, long2024distributionally}. DRO has been used in model predictive control to provide closed-loop guarantees \cite{mcallister2025distributionally,coulson2021distributionally}, recursive feasibility from data \cite{mark2023recursively} with total-variation and tube-based formulations \cite{dixit2022distributionally, aolaritei2023wasserstein}. DRO has also been impactful in power and energy systems, where distributionally robust chance constraints bound constraint-violation risk under the unknown distribution of renewable generation and load, with applications to optimal power flow \cite{YG-KB-EDA-ZH-THS:19, BL-RJ-JLM:19, duan2018distributionally, xie2018distributionally} and economic dispatch \cite{BKP-ARH-SB-DSC-AC:21}.

In autonomous systems and robotics, DRO has been used to make planning and control robust to uncertainty in state estimation, perception, and environment models and changes. Summers~\cite{summers2018_dr_rrt} proposed a distributionally robust sampling-based planner that accounts for localization, dynamics, and obstacle uncertainties through worst-case risk allocation, while \cite{lathrop2021distributionally} developed a rapidly exploring random tree algorithm that leverages Wasserstein ambiguity sets to provide finite-sample probabilistic safety guarantees in uncertain obstacle environments. Boskos et al. \cite{DB-JC-SM:23-cdc} proposed a distributionally robust coordination algorithm to achieve optimal deployment of a multi-agent system, responding to events of interest with unknown probability distribution.
Xu et al. \cite{xu2024_drcc} incorporated Wasserstein distributionally robust chance constraints into safe-corridor trajectory optimization and derived a convex quadratic reformulation. DRO has also been increasingly applied to safe navigation under uncertainty, including sensing and localization errors \cite{long2024sensorbased_dro}, uncertain obstacle and pedestrian motion \cite{Hakobyan2022_collision_cvar,ryu_2024icra_dro_safe}, and uncertainty from learned perception or environment models \cite{ham_dro_edl_mpc_2025,long2025neural_cdf_dro}.

\subsubsection*{Unified perspective}

Recent work has started to connect CP and DRO. Cauchois et al.~\cite{cauchois2024robust} developed a robust validation approach that uses CP-style prediction sets to guarantee coverage over an $f$-divergence ambiguity set around the source distribution, thereby linking distribution shift robustness to worst-case coverage control. Aolaritei et al.~\cite{aolaritei2025conformal} further developed a distributionally robust form of CP under Lévy-Prokhorov ambiguity sets, showing how local and global perturbations of the test distribution can be propagated through conformity scores to characterize worst-case quantiles and coverage. Guo~\cite{guo2026learning} utilized CP in a DRO formulation by centering the DRO ambiguity set at a fitted predictive distribution, rather than the empirical one, and calibrating the DRO radius using split CP across different problem instances. 

%split CP across problem instances calibrates its radius so that the set contains the unknown distribution with prescribed probability. 

CP and DRO are not the only routes to distribution-free finite-sample guarantees. Classical distribution-free tolerance limits provide coverage from order statistics, with the  required sample size following from a binomial tail
\cite{wilks1941determination}. The scenario approach \cite{calafiore2006scenario,
campi2008exact} carries this idea into optimization: the solution of a convex program built from $K$ sampled constraints violates the true constraint with a known probability, and a sampling-and-discarding refinement, in which some of the sampled constraints are removed, yields a violation probability with a Beta
distribution \cite{campi2011sampling}. More recently, wait-and-judge guarantees have been extended beyond convex programs to the nonconvex setting~\cite{garatti2024nonconvex}. O'Sullivan et al.~\cite{NO-LR-KM:25} show that ranking nonconformity scores is a one-dimensional scenario program with discarded constraints, and extend the argument to calibration-conditional CP. Calafiore~\cite{calafiore2026bridging} further develops the connection between conformal calibration and scenario optimization, using it to allocate risk across multiple calibrated constraints. Broader comparisons of CP with scenario optimization and PAC-Bayes theory for verification and control are provided in~\cite{lindemann2025formal}.

Unlike prior works that primarily combine CP with distributionally robust ideas to obtain shift-robust coverage, we aim to develop a unified probabilistic framing of CP and DRO, characterize their connection in finite-sample quantile estimation, and provide a systematic comparison of their statistical behavior and practical trade-offs.

%% file: tex/ProblemFormulation.tex
\section{Problem Formulation and Preliminaries}
\label{sec: UQ}

Let $(\Omega, \mathcal{F}, P)$ be a probability space, where $\Omega$ is the sample space, 
$\mathcal{F}$ is a $\sigma$-algebra, and $P$ is a probability measure. For $\mathcal{X} \subseteq \mathbb{R}$ with Borel $\sigma$-algebra $\mathcal{B}(\mathcal{X})$, 
denote by $\mathcal{P}(\mathcal{X})$ the set of all Borel probability measures on $\mathcal{X}$. Let $\mathbb{P} \in \mathcal{P}(\mathcal{X})$ denote the distribution of a random variable $R: \Omega \to \mathcal{X}$.
We use $\mathbb{P}_{\text{test}} \in \mathcal{P}(\mathcal{X})$ to denote a test distribution 
that may differ from $\mathbb{P}$. Let $R^{(0)}:\Omega \to \calX
$, referred to as the test score, be drawn from $\bbP_{\text{test}}$. 

For any distribution $\mathbb{Q}\in\mathcal{P}(\mathcal{X})$ and level
$p\in(0,1)$, define its $p$-quantile as
\begin{equation} \label{eq:quantile_definition} q_{p}(\mathbb{Q}) := \inf\{ \alpha\in\mathbb{R}: \mathbb{Q}(R\leq\alpha)\geq p \}, \;\; R\sim\mathbb{Q}.
\end{equation} 
For $n$ values $a_1,\ldots,a_n\in\mathbb{R}\cup\{\infty\}$ and a level
$p\in(0,1)$, we write $\Quant_p(a_1,\ldots,a_n)$ for the $\ceil*{np}$-th smallest
of them, the empirical $p$-quantile.
If $\bbP_{\text{test}}$ were known, the solution would be
$q_{1-\delta}(\bbP_{\text{test}})$ in \eqref{eq:quantile_definition}, available in
closed form for standard families such as the Gaussian or the uniform. In
practice $\bbP_{\text{test}}$ is unknown and $\alpha$ must be estimated from a
calibration dataset $R^{(1)}, \hdots, R^{(K)}$ drawn i.i.d.\ from $\bbP$,
denoted $R^{1:K}:= R^{(1)},\ldots,R^{(K)}$. Since $\bbP$ may differ from
$\bbP_{\text{test}}$, estimation requires a known bound on their discrepancy.

\begin{problem}\longthmtitle{Test-distribution quantile estimation}
\label{prob: UQ}
Let $\delta \in (0,1)$ be a risk level, let $R^{1:K}$ be calibration scores
drawn i.i.d.\ from $\bbP$, and let $\dist$ be a discrepancy on
$\mathcal{P}(\mathcal{X})$ with a known budget $\gamma \ge 0$ such that
$\dist(\bbP,\bbP_{\text{test}}) \le \gamma$. Given $\delta$, $R^{1:K}$ and
$(\dist,\gamma)$, compute the smallest $\alpha \in \mathbb{R}$ satisfying:
\begin{equation}\label{eq:original_UQ}
    \bbP_{\text{test}}(R^{(0)} \leq \alpha) \geq 1 - \delta .
\end{equation}
\end{problem}

We first consider the case $\bbP = \bbP_{\text{test}}$, and aim to construct an estimator $\bar{\alpha} = \bar{\alpha}(R^{1:K})$ such that $\mathbb{P}(R^{(0)} \leq \bar{\alpha}) \geq 1 - \delta$ holds with high probability over the draw of the calibration samples. We define the problem as follows.

%
% \marginJC{I wonder if we could label Problem 2 as 1.a and Problem 3 as 1.b? That way is clear that Problems 2 and 3 are just different flavors of Problem 1. With the current numbering, in what follows, we deal with Problem 2 and Problem 3, but never refer back to Problem 1. With the suggested numbering, i think it is clearer that we're just dealing with tractable formulations of Problem 1.}
%

\begin{subproblem}\longthmtitle{Calibration-conditional quantile estimation}
\label{prob:UQ_no_test_shift}
Consider Problem~\ref{prob: UQ} with $\gamma = 0$, i.e., $\bbP_{\text{test}} = \bbP$.
Given a risk level $\delta \in (0,1)$ and $K+1$ i.i.d. samples
$R^{(0)},R^{(1)},\ldots,R^{(K)}$ from $\mathbb{P}$, construct a
data-dependent threshold
$\bar{\alpha}=\bar{\alpha}(R^{1:K})$
such that
\begin{align}
\label{eq:calib_cond_gua}
    \mathbb{P}\left(
    R^{(0)} \leq \bar{\alpha}
    \; \middle|\, R^{1:K}
    \right)
    \geq 1-\delta .
\end{align}
\end{subproblem}

We refer to \eqref{eq:calib_cond_gua} as a calibration-conditional coverage guarantee. The terminology emphasizes that, after the calibration samples $R^{1:K}$ are realized, the data-dependent threshold $\bar{\alpha}$ should cover an independent future test score $R^{(0)}$ with probability at least $1-\delta$.

%The formulation above assumes that the calibration and test scores are drawn from the same distribution. In practice, this assumption may fail when calibration data are historical or collected under different environments, sensing conditions, populations, or tasks than the future test data. 

The assumption that the calibration and test scores are drawn from the same distribution may fail in practice, e.g., when the calibration data are historical or collected under different environments, sensing conditions, populations, or tasks than the future test data. For example, a model calibrated on past observations may be deployed under a new temporal regime, or a language model calibrated on one benchmark or prompt distribution may be evaluated on another. 
%Consequently, the test-score distribution may differ from the calibration-score distribution, i.e., $\mathbb{P}_{\mathrm{test}} \neq \mathbb{P}$. 
To account for such calibration-test distribution shift, we consider Problem~\ref{prob:UQ_test_shift}.

\begin{subproblem}\longthmtitle{Calibration-test shift-robust quantile estimation}
\label{prob:UQ_test_shift}
Consider Problem~\ref{prob: UQ} with $\gamma > 0$.
Given a risk level $\delta \in (0,1)$, $K$ i.i.d. calibration samples
$R^{1:K}$ drawn from $\mathbb{P}$, and an independent test sample $R^{(0)}$ drawn from $\mathbb{P}_{\mathrm{test}}$, construct
an estimator $\bar{\alpha}=\bar{\alpha}(R^{1:K})$
such that
\begin{align}
\label{eq:calib_cond_gua_shift}
    \mathbb{P}_{\mathrm{test}}\left(
    R^{(0)} \leq \bar{\alpha}
    \; \middle|\,
    R^{1:K}
    \right)
    \geq 1-\delta,
\end{align}
where $\mathbb{P} \neq \mathbb{P}_{\mathrm{test}}$.
\end{subproblem}

%
% \marginJC{Can we add informal names to Problems 1-3?}
%

In this paper, we present two methods for constructing the estimators in \eqref{eq:calib_cond_gua} and \eqref{eq:calib_cond_gua_shift}: conformal prediction (CP) and distributionally robust optimization (DRO).

% \NA{We can simplify the section titles below, e.g.,
% \begin{itemize}
%     \item Sec. 4. CP and DRO Without Distribution Shift
%     \item Sec. 5. CP and DRO With Distribution Shift
% \end{itemize}}

\section{CP and DRO Without Distribution Shift}

In this section, we address Problem~\ref{prob:UQ_no_test_shift} with CP and DRO when no test-time distribution shift is present. We first consider the CP approach. 

\subsection{Calibration-Conditional Conformal Prediction} 
\label{sec: uq_cp} 

Unfortunately, exact calibration-conditional guarantees of the form \eqref{eq:calib_cond_gua} cannot be achieved in a finite-sample, distribution-free setting. Intuitively, after conditioning on a finite calibration set, the observed samples do not fully determine the unseen tail of the distribution; therefore, any finite threshold may fail to cover enough probability mass for some distribution consistent with the calibration data. Instead, \cite{vovk2012conditional} presents a conformal prediction variant that provides calibration-conditional coverage guarantees of the form:
\begin{align}\label{eq:cal_cond_goal}
    \mathbb{P}^K\big(\mathbb{P}(R^{(0)}\le \bar{\alpha}(R^{1:K}))\ge 1-\delta\big)\ge 1-\beta,
\end{align}
where $\beta\in (0,1)$ is a user-defined  failure probability over the calibration data. For notational convenience, we write $\mathbb{P}^K = \mathbb{P}^{\otimes K}$ 
for the product measure of $K$ i.i.d. draws from $\mathbb{P}$, 
so that $\mathbb{P}^K$ denotes probability with respect to the samples $\{R^{1:K}\}$, and $\mathbb{E}^K$ the corresponding expectation over $R^{1:K}\sim\mathbb{P}^K$. We intuitively interpret the statement in equation \eqref{eq:cal_cond_goal} as ``with probability no less than $1-\beta$ over the draw of calibration datapoints $R^{1:K}$, it holds that $R^{(0)}\le \bar{\alpha}(R^{1:K})$ with probability no less than $1-\delta$ over the draw of a test datapoint $R^{(0)}$.'' Consequently, the outer probability measure $\mathbb{P}^K(\cdot)$ is defined over the randomness in $R^{1:K}$, while the inner probability measure $\mathbb{P}(\cdot)$ is defined over the randomness in $R^{(0)}$. If $\beta\in (0,1)$ is chosen to be very small, then one can approximately obtain calibration-conditional guarantees. 

Classical split conformal prediction \cite{shafer2008tutorial, angelopoulos2021gentle} is typically formulated through the marginal coverage guarantee:
\begin{equation}
\label{eq:marginal_coverage_goal}
\mathbb P^{K+1}\left(
R^{(0)}\le \bar\alpha(R^{1:K})
\right)\ge 1-\delta,
\end{equation}
which averages jointly over the randomness in the calibration sample
$R^{1:K}$ and the independent test score $R^{(0)}$. Consequently, marginal
coverage does not control the coverage obtained for a particular
realization of the calibration sample: calibration sets with coverage
below $1-\delta$ may be compensated for by calibration sets with higher
coverage.

The calibration-conditional guarantee in \eqref{eq:cal_cond_goal}
provides additional control by requiring that the target coverage holds for at least a $1-\beta$ fraction of calibration samples. The following result relates this two-level guarantee to marginal coverage.

\begin{lemma}\longthmtitle{Calibration-conditional coverage implies marginal coverage}
\label{lem:marginal_coverage}
Let $R^{(0)},R^{(1)},\ldots,R^{(K)}$ be i.i.d.\ draws from a distribution $\mathbb P$, and let $\bar\alpha:\mathbb R^K\to\mathbb R$ be any measurable function of the calibration samples $R^{1:K}$. 
If for some $\delta,\beta\in(0,1)$:
\[
\mathbb P^K \bigl(\ \mathbb P\bigl(R^{(0)}\le \bar\alpha(R^{1:K}) \bigr) \ge 1-\delta\ \bigr) \ge 1-\beta,
\]
then the marginal coverage satisfies: % (average-case) 
\[
\mathbb{P}^{K + 1} \bigl(R^{(0)}\le \bar\alpha(R^{1:K})\bigr) \ge (1-\delta)(1-\beta),
\]
where $\mathbb{P}^{K + 1}$ is the $K + 1$ dimensional product measure. 
\resultend
\end{lemma}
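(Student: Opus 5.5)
The plan is to write the marginal probability as an iterated integral and then bound the inner conditional coverage from below on the good event. Define the conditional coverage function
\[
g(r^{1:K}) := \mathbb P\bigl(R^{(0)}\le \bar\alpha(r^{1:K})\bigr), \qquad r^{1:K}\in\mathbb R^K .
\]
Because $\bar\alpha$ is measurable, the set $\{(r^{(0)},r^{1:K}) : r^{(0)}\le \bar\alpha(r^{1:K})\}$ is a measurable subset of $\mathbb R^{K+1}$. Since $R^{(0)}$ is independent of $R^{1:K}$, the joint law is the product $\mathbb P\otimes\mathbb P^K=\mathbb P^{K+1}$. Tonelli's theorem then gives that $g$ is measurable and
\[
\mathbb P^{K+1}\bigl(R^{(0)}\le \bar\alpha(R^{1:K})\bigr)=\mathbb E^K\bigl[g(R^{1:K})\bigr].
\]
This is the tower property for the conditional probability appearing in \eqref{eq:calib_cond_gua}.

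Next, let $G:=\{r^{1:K}: g(r^{1:K})\ge 1-\delta\}$, which is measurable because $g$ is. By hypothesis, $\mathbb P^K(G)\ge 1-\beta$. Since $g\ge 0$ everywhere, we can drop the contribution from the complement of $G$:
\[
\mathbb E^K[g]\;\ge\;\mathbb E^K[g\,\mathds 1_G]\;\ge\;(1-\delta)\,\mathbb P^K(G)\;\ge\;(1-\delta)(1-\beta).
\]
This is exactly the claimed bound.

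The only step requiring care is the first one: justifying that the inner probability is a measurable function of the calibration data and that the joint probability factorizes as an iterated integral. Both follow from measurability of $\bar\alpha$, independence of the samples, and Fubini--Tonelli applied to the indicator of a measurable set. Everything after that is an elementary Markov-type lower bound.
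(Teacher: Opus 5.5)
Your proposal is correct and follows essentially the same route as the paper: express the marginal probability via the tower property as $\mathbb{E}^K$ of the conditional coverage, restrict to the good event, and bound below by $(1-\delta)$ times its probability. Your explicit Fubini--Tonelli justification of the measurability of the conditional coverage is a small added rigor that the paper leaves implicit.
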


\begin{proof}
Define the calibration-conditional coverage
\[
c(R^{1:K}) := \mathbb P\bigl(R^{(0)}\le \bar\alpha(R^{1:K})\bigr)\in[0,1]
\]
and the event $A:=\{\,c(R^{1:K})\ge 1-\delta\,\}$. By the tower property,
\begin{align}
&\mathbb{P}^{K + 1} \bigl(R^{(0)}\le \bar\alpha(R^{1:K})\bigr)
= \mathbb E^K\bigl[c(R^{1:K})\bigr] \notag \\
&\quad = \mathbb E^K\bigl[c(R^{1:K})\,\mathds{1}_A\bigr]
+\mathbb E^K\bigl[c(R^{1:K})\,\mathds{1}_{A^c}\bigr] \notag \\
&\quad \ge (1-\delta)\,\mathbb P^K(A) + 0\cdot \mathbb P^K(A^c).
\end{align}
Using $\mathbb P^K(A)\ge 1-\beta$ gives the claim.
\end{proof}

Lemma~\ref{lem:marginal_coverage} shows that the two-level guarantee also
provides marginal coverage, although at the lower bound $(1-\delta)(1-\beta)$. The converse need not hold: a method may achieve
the marginal guarantee in \eqref{eq:marginal_coverage_goal} while failing to attain coverage $1-\delta$ for a non-negligible fraction of
calibration samples. This distinction is central to our comparison and
is illustrated in the numerical studies. We next summarize how calibration-conditional conformal prediction constructs an estimator
$\bar\alpha$ satisfying \eqref{eq:cal_cond_goal}.

The estimators below share the same reduction. Each selects an order statistic from
$\{R^{1:K},\infty\}$. Under continuity of the score distribution, the selected threshold achieves
coverage at least $1-\delta$ if and only if it is at least the true $(1-\delta)$-quantile $\alpha$.
For a fixed order statistic, this depends only on how many of the $K$ calibration scores fall
below $\alpha$. That count is $\mathrm{Binomial}(K,1-\delta)$, so \eqref{eq:cal_cond_goal} reduces
to controlling a binomial tail. The three constructions below differ only in how they bound it.

%
% \marginJC{Are we doing 1st letter=capitals in titles? Seems like most of the times, but not consistently always}
%
\begin{lemma}[\textbf{Calibration-conditional coverage} \cite{vovk2012conditional}]\label{lem:2}
    Let $R^{(0)},\hdots,R^{(K)}$ be $K+1$ independent and identically distributed random variables. Then, the probabilistic prediction region in equation \eqref{eq:cal_cond_goal} is valid for the choice:
\begin{equation}\label{eq:cp_cond}
    \bar{\alpha}_{\mathrm{CP}}:=\Quant_{1 - \delta + \sqrt{\frac{\ln(1/\beta)}{2K}}}( R^{1:K}, \infty ).
\end{equation}
\resultend
\end{lemma}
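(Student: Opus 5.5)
We reduce the two-level guarantee \eqref{eq:cal_cond_goal} to a binomial tail bound, as in the reduction sketched before the lemma, and then apply Hoeffding's inequality. Set $p := 1-\delta+\sqrt{\ln(1/\beta)/(2K)}$ and $m := \ceil*{(K+1)p}$. By definition of $\Quant_p$ on the $K+1$ values $R^{1:K},\infty$, the estimator $\bar\alpha_{\mathrm{CP}}$ is the $m$-th order statistic of this augmented list. If $m > K$ (in particular if $p>1$), then $\bar\alpha_{\mathrm{CP}}=\infty$ and coverage $1-\delta$ holds trivially. So we assume $m\le K$, in which case $\bar\alpha_{\mathrm{CP}}=R_{(m)}$, the $m$-th smallest calibration score.

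\textbf{Step 1: reduce coverage failure to a counting event.} Let $\alpha^\star := q_{1-\delta}(\mathbb P)$. We will show that if $\mathbb P(R^{(0)}\le R_{(m)}) < 1-\delta$, then $R_{(m)} < \alpha^\star$, since $\mathbb P(R\le t)\ge 1-\delta$ for every $t\ge\alpha^\star$. Hence the failure event implies that at least $m$ calibration scores lie strictly below $\alpha^\star$. Define $N := \#\{k: R^{(k)}<\alpha^\star\}$. Then $N\sim\mathrm{Binomial}(K,\pi)$ with $\pi := \mathbb P(R<\alpha^\star)\le 1-\delta$, where the inequality follows from the infimum in \eqref{eq:quantile_definition}. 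This argument needs no continuity assumption: ties and atoms only make the inclusion cruder, and the inclusion still holds. By stochastic monotonicity of the binomial in $\pi$, it therefore suffices to bound $\mathbb P(N\ge m)$ with $N\sim\mathrm{Binomial}(K,1-\delta)$.

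\textbf{Step 2: Hoeffding bound.} Since $m\ge (K+1)p \ge Kp = K(1-\delta)+\sqrt{K\ln(1/\beta)/2}$, we have
\[
\mathbb P(N\ge m)\le \mathbb P\Bigl(N-K(1-\delta)\ge \sqrt{K\ln(1/\beta)/2}\Bigr)\le \exp\Bigl(-2K\cdot\tfrac{\ln(1/\beta)}{2K}\Bigr)=\beta,
\]
by the one-sided Hoeffding inequality for sums of Bernoulli variables with deviation $t=\sqrt{\ln(1/\beta)/(2K)}$ in the mean. Taking complements then gives \eqref{eq:cal_cond_goal}.

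\textbf{Expected obstacle.} The only delicate point is Step 1: the direction of the inequalities at $\alpha^\star$ when $\mathbb P$ has atoms, and the convention for $\Quant_p$ when $p\ge 1$. We handle the first by using the strict count $\{R^{(k)}<\alpha^\star\}$, which has probability at most $1-\delta$. We handle the second by noting that the $\infty$ entry is selected, so the claim is trivial in that case. The Hoeffding step itself is routine.
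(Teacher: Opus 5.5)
Your proof is correct and follows the route the paper indicates: you reduce coverage failure to the binomial tail event that at least $m$ calibration scores fall below the true $(1-\delta)$-quantile, then bound that event with Hoeffding's inequality at deviation $\sqrt{\ln(1/\beta)/(2K)}$. The paper's sketch assumes a continuous score distribution. Your strict count $\#\{k: R^{(k)}<\alpha^\star\}$, together with $\mathbb{P}(R<\alpha^\star)\le 1-\delta$ and binomial stochastic monotonicity, removes that assumption.
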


% \NA{Minor but is there a way to avoid having \resultend alone on a new line?}

Lemma~\ref{lem:2} follows from Hoeffding's inequality~\cite{bentkus2004hoeffding}, which bounds the tail without using the variance $\delta(1-\delta)$, so its correction does not shrink as $\delta$ does. To mitigate this,~\cite{vovk2012conditional, angelopoulos2024theoretical} proposed two alternative formulations for estimating $\bar{\alpha}$: Lemma~\ref{lem:3} uses the exact binomial tail, and Lemma~\ref{lem:4} a Bernstein bound~\cite{boucheron2013concentration} that retains the variance. 

\begin{lemma}[\textbf{Calibration-conditional coverage, variant 1} \cite{vovk2012conditional, angelopoulos2024theoretical}]
\label{lem:3}
Under the setting of Lemma~\ref{lem:2}, the probabilistic prediction region in equation \eqref{eq:cal_cond_goal} is valid for the choice:
\begin{equation}\label{eq: cali_cp_lemma2} \bar{\alpha}_{\mathrm{CP}2}:=\Quant_{1 - \delta'}( R^{1:K}, \infty ) \end{equation}
where $\delta' \in [0,1]$ satisfies $\beta \ge I_{1-\delta}(K-\lfloor k \rfloor, 1+\lfloor k \rfloor)$,
with $k = \delta'(K+1)-1$, and
$I_x(a,b)$ denoting the regularized incomplete beta function,
i.e., the cumulative distribution function of a $\mathrm{Beta}(a,b)$ distribution
evaluated at $x$.
\resultend
\end{lemma}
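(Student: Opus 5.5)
The plan is to reduce the two-level guarantee \eqref{eq:cal_cond_goal} to a binomial tail, as outlined before Lemma~\ref{lem:2}, and then identify that tail with the regularized incomplete beta function. Let $q:=q_{1-\delta}(\bbP)$ be the true quantile from \eqref{eq:quantile_definition}. Two facts follow from that definition and need no continuity assumption: $\bbP(R\le q)\ge 1-\delta$ by right-continuity of the CDF, and $\bbP(R<q)\le 1-\delta$. The first fact gives: whenever $\bar\alpha_{\mathrm{CP}2}\ge q$, the coverage $\bbP(R^{(0)}\le \bar\alpha_{\mathrm{CP}2}\mid R^{1:K})$ is at least $1-\delta$. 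It therefore suffices to show $\bbP^K(\bar\alpha_{\mathrm{CP}2}<q)\le\beta$.

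\textbf{Index of the order statistic.} By definition, $\Quant_{1-\delta'}(R^{1:K},\infty)$ is the $j$-th smallest of the $K+1$ values, where $j=\ceil*{(K+1)(1-\delta')}$. Using $\ceil*{m-x}=m-\lfloor x\rfloor$ for integer $m$, this becomes $j=K+1-\lfloor \delta'(K+1)\rfloor=K-\lfloor k\rfloor$, with $k=\delta'(K+1)-1$. If $j=K+1$, the threshold is $\infty$ and coverage is trivial. Otherwise $\bar\alpha_{\mathrm{CP}2}=R_{(j)}$, the $j$-th order statistic of the calibration scores.

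\textbf{Binomial reduction.} The event $\{R_{(j)}<q\}$ holds exactly when at least $j$ calibration scores lie strictly below $q$. Let $N:=\#\{i:R^{(i)}<q\}$. Then $N\sim\mathrm{Binomial}(K,p)$ with $p=\bbP(R<q)\le 1-\delta$. Upper binomial tails are nondecreasing in the success probability, so
\[
\bbP^K(R_{(j)}<q)=\bbP(N\ge j)\le \bbP\bigl(\mathrm{Bin}(K,1-\delta)\ge j\bigr).
\]
The monotonicity can be justified by a standard coupling through uniforms.

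\textbf{Beta identity and conclusion.} The classical identity $\bbP(\mathrm{Bin}(n,x)\ge m)=I_x(m,n-m+1)$ follows by differentiating both sides in $x$, or from the fact that the $m$-th uniform order statistic is $\mathrm{Beta}(m,n-m+1)$. Applying it with $n=K$, $x=1-\delta$ and $m=K-\lfloor k\rfloor$ shows that the failure probability is at most $I_{1-\delta}(K-\lfloor k\rfloor,1+\lfloor k\rfloor)$. By the hypothesis on $\delta'$, this is at most $\beta$, which yields \eqref{eq:cal_cond_goal}.

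The main obstacle I expect is bookkeeping rather than analysis. The ceiling/floor conversion between $\delta'$, $k$ and $j$ must be handled carefully, including edge cases such as $\delta'(K+1)$ being an integer and $j$ falling outside $\{1,\dots,K\}$. The beta-function arguments must also be matched to the correct (upper rather than lower) binomial tail. I would verify this by checking the case $K=1$ directly.
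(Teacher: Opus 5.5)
Your proposal is correct. It is the same binomial-tail reduction the paper sketches before Lemma~\ref{lem:2}; the paper itself defers the full proof to the cited references. You identify the threshold as the $(K-\lfloor k\rfloor)$-th order statistic, bound the failure event by the event that at least that many scores fall strictly below the true $(1-\delta)$-quantile, and evaluate the resulting upper tail as $I_{1-\delta}(K-\lfloor k\rfloor,1+\lfloor k\rfloor)$. Your version is also slightly more general than the paper's sketch: the sketch invokes continuity so that the count is exactly $\mathrm{Binomial}(K,1-\delta)$, whereas you need only the implication ``threshold $\ge q$ $\Rightarrow$ coverage'', together with $\mathbb{P}(R<q)\le 1-\delta$ and the monotonicity of binomial upper tails, so no continuity assumption is required.
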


\begin{lemma}[\textbf{Calibration-conditional coverage, variant 2} \cite{vovk2012conditional}]
\label{lem:4}
Under the setting of Lemma~\ref{lem:2}, the probabilistic prediction region in equation \eqref{eq:cal_cond_goal} is valid for the choice:
\begin{equation}\label{eq: cali_cp_lemma3} \bar{\alpha}_{\mathrm{CP}3}:=\Quant_{1 -\delta + \sqrt{\frac{2 \delta \ln(1/\beta)}{K}} + \frac{2\ln(1/\beta)}{K}}( R^{1:K}, \infty ). 
\end{equation}
\resultend
\end{lemma}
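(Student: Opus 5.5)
We reduce Lemma~\ref{lem:4} to a binomial tail, exactly as in the reduction described before Lemma~\ref{lem:2}, and then bound that tail with Bernstein's inequality instead of Hoeffding's. Write $\epsilon := \sqrt{2\delta\ln(1/\beta)/K} + 2\ln(1/\beta)/K$, and let $m := \ceil*{K(1-\delta+\epsilon)}$. If $m > K$, then $\bar\alpha_{\mathrm{CP}3}=\infty$ and the claim is trivial, so assume $m\le K$. In that case $\bar\alpha_{\mathrm{CP}3}=R_{(m)}$, the $m$-th order statistic. Let $\alpha := q_{1-\delta}(\mathbb{P})$.

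\textbf{Reduction.} First assume $\mathbb{P}$ is continuous. Then $\mathbb{P}(R^{(0)}\le R_{(m)})<1-\delta$ implies $R_{(m)}<\alpha$. This in turn implies that at least $m$ calibration scores lie strictly below $\alpha$. Set $N:=\#\{i: R^{(i)}<\alpha\}$; each indicator is Bernoulli with mean $\mathbb{P}(R<\alpha)\le 1-\delta$. (In the general, non-continuous case we use $\{R^{(i)}<\alpha\}$, whose probability is still at most $1-\delta$ by the definition of the quantile, and the same implication holds.) So $N$ is stochastically dominated by $\mathrm{Binomial}(K,1-\delta)$. It therefore suffices to show
\[
\mathbb{P}^K\bigl(N \ge K(1-\delta+\epsilon)\bigr)\le \beta ,
\]
since $N\ge m$ implies $N\ge K(1-\delta+\epsilon)$.

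\textbf{Bernstein step.} Write $N-K(1-\delta) = \sum_i (X_i - \mathbb{E}X_i)$ with $|X_i-\mathbb{E}X_i|\le 1$. The variance of each term is at most $\delta(1-\delta)\le\delta$; after reducing to the dominating binomial with $p=1-\delta$, the variance is exactly $\delta(1-\delta)$. Bernstein's inequality in the form $\Pr(\sum (X_i-\mathbb{E}X_i)\ge \sqrt{2Kv\,t}+ t/3)\le e^{-t}$ \cite{boucheron2013concentration}, taken with $v=\delta$ and $t=\ln(1/\beta)$, gives deviation $\sqrt{2K\delta\ln(1/\beta)} + \ln(1/\beta)/3$. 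Dividing by $K$, this is at most $K\epsilon/K=\epsilon$, since $2\ln(1/\beta)/K \ge \ln(1/\beta)/(3K)$. Hence the tail probability is at most $\beta$.

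\textbf{Main obstacle.} The delicate point is making the stochastic dominance and monotonicity step rigorous. We need that $\Pr(\mathrm{Bin}(K,p)\ge s)$ is increasing in $p$, which follows from a standard coupling. With that in hand, Bernstein can be applied to the dominating $\mathrm{Bin}(K,1-\delta)$ variable directly. We also need to handle ties and atoms, which the strict-inequality event above takes care of. The constants are loose (the factor $2$ versus $1/3$ on the linear term), so we only need the weaker form of Bernstein.
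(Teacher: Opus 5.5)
Your proposal is correct and takes essentially the paper's route: you reduce the two-level event to the upper tail of the count of calibration scores below $\alpha$ (dominated by $\mathrm{Binomial}(K,1-\delta)$) and bound that tail with Bernstein's inequality using the variance $\delta(1-\delta)\le\delta$; your strict-inequality count and stochastic-dominance step are a slightly more general version of the paper's continuity-based reduction. One small correction: under the paper's convention $\Quant_p(R^{1:K},\infty)$ is the $\lceil (K+1)p\rceil$-th smallest of the $K+1$ values, not $R_{(\lceil Kp\rceil)}$, but since $\lceil (K+1)p\rceil\ge Kp$ the inclusion into $\{N\ge K(1-\delta+\epsilon)\}$, and hence the whole argument, goes through unchanged.
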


The connection between Lemma~\ref{lem:3} and scenario optimization is
established in~\cite{NO-LR-KM:25}: the $j$-th order statistic of $K$ scores
solves a one-dimensional scenario program with $K-j$ of the sampled constraints
discarded, and the beta-function condition of Lemma~\ref{lem:3} is the
sampling-and-discarding guarantee of~\cite{campi2011sampling} specialized to
that program. The correspondence is exact only for Lemma~\ref{lem:3}, since
Lemmas~\ref{lem:2} and~\ref{lem:4} bound the same condition through Hoeffding's
and Bernstein's inequalities and are therefore conservative relative to it.

\begin{remark}[\textbf{Finite-sample non-vacuity}]
\label{rem:cp_finite_sample}
For an estimator of the form
$\Quant_{p_K}(R^{1:K},\infty)$, the appended value
$\infty$ is inactive, and hence the estimator is finite, if and only if
\begin{equation}
p_K\le \frac{K}{K+1}.
\end{equation}
If $p_K>\frac{K}{K+1}$, the quantile selects the value
$\infty$, yielding a vacuous prediction region. For the estimator in Lemma~\ref{lem:2}, the finite-threshold condition is
\begin{equation}
\label{eq:cp_finite_condition}
\sqrt{\frac{\ln(1/\beta)}{2K}}+\frac{1}{K+1}\le\delta.
\end{equation}
Equivalently, the minimum calibration size is
\begin{equation}
K_{\min}^{\mathrm{CP}}(\delta,\beta)
:=
\min\left\{
K\in\mathbb N:
\sqrt{\frac{\ln(1/\beta)}{2K}}+\frac{1}{K+1}\le\delta
\right\}.
\end{equation}
For Lemma~\ref{lem:3}, the threshold is the $j$-th order statistic with
$j=K-\lfloor k\rfloor$, and $I_{1-\delta}(j,K+1-j)$ is decreasing in $j$. A finite threshold is therefore available when the largest admissible choice $j=K$ satisfies the beta-function condition. Since
$\mathrm{Beta}(K,1)$ has cumulative distribution function $x^{K}$, this leads to:
\begin{equation} \label{eq:cp3_finite_condition}
(1-\delta)^{K}\le\beta,
\;\text{equivalently}\;
K^{\mathrm{CP}2}_{\min}(\delta,\beta)
=\left\lceil\frac{\ln(1/\beta)}{\ln\big(1/(1-\delta)\big)}\right\rceil.
\end{equation}
For Lemma~\ref{lem:4}, the corresponding condition is
\begin{equation}
\sqrt{\frac{2\delta\ln(1/\beta)}{K}}
+\frac{2\ln(1/\beta)}{K}
+\frac{1}{K+1}
\le\delta.
\end{equation}
\resultend
\end{remark}

The variant in Lemma~\ref{lem:4} can be sharper than Lemma~\ref{lem:2}
when $\delta$ is small, and the exact beta-function construction of
Lemma~\ref{lem:3} is sharper than both. Table~\ref{tab:kmin} makes the
comparison precise. The Hoeffding-based
condition~\eqref{eq:cp_finite_condition} forces
$K_{\min}=\Theta\big(\ln(1/\beta)/\delta^{2}\big)$, whereas
Lemmas~\ref{lem:3} and~\ref{lem:4} both scale as
$\Theta\big(\ln(1/\beta)/\delta\big)$, with a much smaller constant for
Lemma~\ref{lem:3} by~\eqref{eq:cp3_finite_condition}. These conditions suggest a small-sample limitation of the constructions
above: when the required confidence correction is large relative to the calibration size, the only distribution-free threshold they supply is the vacuous value $\infty$. Table~\ref{tab:kmin} summarizes the three lemmas: Lemma~\ref{lem:3} is non-vacuous at a much smaller $K$. All three are evaluated in Sections~\ref{sec:examples} and~\ref{sec:exp_setup_cp_cls}.

\begin{table}[htb]
    \centering
    \small
    \caption{Minimum calibration size $K_{\min}$ for a finite
    (non-vacuous) threshold, at $\beta=0.05$; the smallest of the three
    is shown in bold. The Hoeffding bound of Lemma~\ref{lem:2} degrades
    as $\delta^{-2}$, while Lemmas~\ref{lem:3} and~\ref{lem:4} degrade
    as $\delta^{-1}$.}
    \label{tab:kmin}
    \begin{tabular}{@{}lrrr@{}}
    \toprule
    $\delta$ & Lemma~\ref{lem:2} & Lemma~\ref{lem:3} & Lemma~\ref{lem:4} \\
    \midrule
    $0.2$   & $47$          & $\mathbf{14}$    & $86$ \\
    $0.1$   & $170$         & $\mathbf{29}$    & $172$ \\
    $0.05$  & $639$         & $\mathbf{59}$    & $343$ \\
    $0.01$  & $15178$     & $\mathbf{299}$   & $1712$ \\
    $0.001$ & $1499866$ & $\mathbf{2995}$ & $17119$ \\
    \bottomrule
    \end{tabular}
\end{table}

We next turn to distributionally robust optimization (DRO), which supplies a threshold that is finite for every $K$. Rather than relying on a finite-sample calibration argument, DRO constructs a worst-case guarantee over an ambiguity set of probability distributions centered at the empirical distribution $\widehat{\bbP}_K= \frac{1}{K} \sum_{i=1}^K \delta_{R^{(i)}}$, where $\delta_x$ denotes the Dirac measure at $x$; by construction $q_{1-\delta}(\widehat{\bbP}_K)=\Quant_{1-\delta}(R^{1:K})$. It attains the same calibration-conditional guarantee, at the cost of the distributional assumptions needed to certify its radius.
%
% \marginJC{I think we need a different notation for the empirical distribution: as it stands, $\bbP_K$ can easily be confused with $\bbP^K$, and these are very different things.}
%

\subsection{Distributionally Robust Optimization} 
To formalize the DRO construction, we introduce a Wasserstein ambiguity set that defines a set of possible distributions around the empirical distribution. For $\mathbb{P}, \mathbb{Q} \in \mathcal{P}(\mathcal{X})$, we denote by $\Gamma(\mathbb{P},\mathbb{Q})$ the set of couplings, i.e., all joint distributions on $\mathcal{X} \times \mathcal{X}$ with marginals $\mathbb{P}$ and $\mathbb{Q}$. We consider the $\infty$-Wasserstein distance:
\begin{equation}
\label{eq: wasserstein_infty_distance}
  W_\infty(\mathbb{P}, \mathbb{Q}) := 
  \inf_{\pi \in \Gamma(\mathbb{P},\mathbb{Q})}
    \sup_{(x_1, x_2) \in \operatorname{supp}(\pi)} \|x_1 - x_2\|.
\end{equation}
Here $\operatorname{supp}(\pi)$ denotes the support of the coupling  $\pi$, i.e., the smallest closed set $S \subseteq \mathcal{X}\times\mathcal{X}$ such that $\pi(S) = 1$. Given $\widehat{\bbP}_K$, we define an ambiguity set of distributions $\bbQ$ that are within $W_\infty$ distance of $r>0$ from $\widehat{\bbP}_K$:
\begin{equation}
\label{eq: wasserstein_ball_def}
\mathcal{M}^{r}(\widehat{\bbP}_K) := \left\{ \mathbb{Q} \in \mathcal{P}(\mathbb{X}) \,\middle|\, W_{\infty}(\mathbb{Q}, \widehat{\bbP}_K) \le r \right\}.
\end{equation}
We use $\calM^r(\widehat{\bbP}_K)$ to construct a DRO-based quantile bound analogous to the conformal prediction result.

A key step in the DRO construction is to choose the ambiguity radius $r$ so that the Wasserstein ball around the empirical distribution contains the true distribution with high probability. Following \cite{fournier2015rate,liu2019rate}, Lemma~\ref{lem:choice-r} provides one such choice, $r=r_K(\beta)$, which ensures this containment with probability at least $(1-\beta)$.

\begin{lemma}\longthmtitle{Choice of Wasserstein-$\infty$ ball radius in one dimension}
\label{lem:choice-r}
Assume $\mathbb{P}$ is supported on $[a,b] \subset \mathbb{R}$ and has
density $f$ satisfying $\inf_{x\in[a,b]}f(x) \ge m>0$. Then, for
any $\beta \in (0,1)$, the choice
\begin{equation}
\label{eq:wasserstein_inf_radius}
    r_K(\beta)=\frac{1}{m}\sqrt{\frac{\ln(2/\beta)}{2K}}
\end{equation}
ensures
\begin{equation}
\label{eq: wasserstein_radius_guarantee}
    \mathbb{P}^K\!\left(
    W_\infty(\widehat{\bbP}_K,\mathbb{P}) \le r_K(\beta)
    \right)
    \ge 1-\beta .
\end{equation}
\resultend
\end{lemma}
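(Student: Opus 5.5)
In one dimension, $W_\infty$ between two distributions is the sup-distance between their quantile functions. The monotone (quantile) coupling is optimal for $W_\infty$ on $\mathbb{R}$, so
\[
W_\infty(\widehat{\bbP}_K,\mathbb{P})=\sup_{u\in(0,1)}\bigl|F_K^{-1}(u)-F^{-1}(u)\bigr|,
\]
where $F$ is the CDF of $\mathbb{P}$, $F_K$ is the empirical CDF, and $F^{-1},F_K^{-1}$ are the generalized inverses, consistent with \eqref{eq:quantile_definition}. The plan is to bound this quantile discrepancy by the Kolmogorov distance $\|F_K-F\|_\infty$, using the density lower bound $m$. We then control $\|F_K-F\|_\infty$ with the Dvoretzky--Kiefer--Wolfowitz (DKW) inequality in Massart's form:
\[
\mathbb{P}^K\bigl(\|F_K-F\|_\infty>\varepsilon\bigr)\le 2e^{-2K\varepsilon^2}.
\]
Setting $\varepsilon=\sqrt{\ln(2/\beta)/(2K)}$ makes the right-hand side equal to $\beta$, which matches exactly the radius \eqref{eq:wasserstein_inf_radius} after dividing by $m$.

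The key deterministic step is a Lipschitz property. Since $f\ge m$ on $[a,b]$, the map $F$ is strictly increasing there and $|F(x)-F(y)|\ge m|x-y|$ for $x,y\in[a,b]$. Equivalently, $F^{-1}$ is $(1/m)$-Lipschitz on $[0,1]$. Assume $\|F_K-F\|_\infty\le\varepsilon$ and fix $u\in(0,1)$.
\begin{itemize}
\item \emph{Upper side.} Let $x=F^{-1}(\min\{u+\varepsilon,1\})$. Then $F_K(x)\ge F(x)-\varepsilon\ge u$ if $u+\varepsilon\le 1$, and if $u+\varepsilon>1$ we take $x=b$, where $F_K(b)=1$ since all samples lie in $[a,b]$. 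Either way $F_K^{-1}(u)\le x$, which gives $F_K^{-1}(u)-F^{-1}(u)\le \varepsilon/m$.
\item \emph{Lower side.} Take any $y<F^{-1}(u)-\varepsilon/m$ with $y\ge a$. Then $F(y)\le u-\varepsilon$ with strict slack, so $F_K(y)\le F(y)+\varepsilon<u$. Points below $a$ carry no empirical mass. Hence $F_K^{-1}(u)\ge F^{-1}(u)-\varepsilon/m$.
\end{itemize}
Taking the supremum over $u$ yields $W_\infty(\widehat{\bbP}_K,\mathbb{P})\le \varepsilon/m$ on the DKW event. That event has probability at least $1-\beta$, which proves \eqref{eq: wasserstein_radius_guarantee}.

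The main obstacle is the careful handling of the generalized inverses near the endpoints and at the jumps of $F_K$. One must check the strict versus non-strict inequalities, and make sure that truncating $u\pm\varepsilon$ to $[0,1]$ does not break the bound; the support assumption on $[a,b]$ resolves this. A secondary point is justifying the one-dimensional identity $W_\infty=\sup_u|F_K^{-1}-F^{-1}|$. Only the inequality "$\le$" is actually needed, and it follows directly by exhibiting the comonotone coupling $(F_K^{-1}(U),F^{-1}(U))$ with $U\sim\mathrm{Unif}(0,1)$: this coupling has the correct marginals, and its support lies in the closure of the graph, where the distance is at most $\varepsilon/m$. So no optimality argument is required.
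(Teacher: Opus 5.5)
Your proof is correct and follows the route the paper indicates. The paper gives no standalone proof and cites the result, but its discussion attributes the radius to DKW concentration of the empirical CDF (the $\ln(2/\beta)$ reflecting the two-sided Massart constant), converted into value space by the $(1/m)$-Lipschitz quantile function, together with the one-dimensional representation $W_\infty(\mathbb{P}_1,\mathbb{P}_2)=\sup_{u\in(0,1)}|q_u(\mathbb{P}_1)-q_u(\mathbb{P}_2)|$ used in the proof of Lemma~\ref{lem:dro_quantile_refined}; your handling of the generalized inverses at the endpoints of $[a,b]$ and your comonotone-coupling proof of $W_\infty\le\sup_u|F_K^{-1}-F^{-1}|$ supply the details the paper leaves to its references.
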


The radius in \eqref{eq:wasserstein_inf_radius} is one of several finite-sample choices of the Wasserstein radius proposed in the literature; more general high-confidence ambiguity sets in arbitrary dimension \cite{Dimitris_2021_TAC} and measure-concentration bounds for light-tailed distributions \cite{Esfahani2018DatadrivenDR, fournier2015rate} provide alternatives under different metrics and tail assumptions. These choices, however, share a practical limitation: the radius depends on problem-specific constants (e.g., the dimension, the support diameter, a density lower bound, or tail and moment constants) that are conservative and rarely computable in practice. Consequently, the Wasserstein radius is, in practice, selected by validation or treated as a tunable robustness parameter rather than computed from a closed-form bound.

With \eqref{eq:wasserstein_inf_radius}, the ambiguity set $\mathcal{M}^{r_K(\beta)}$ contains $\mathbb{P}$ with probability at least $1-\beta$, which we use to derive the following distributionally robust conditional coverage guarantee under the $W_{\infty}$ distance.

\begin{lemma}[\textbf{Distributionally robust conditional coverage under $W_\infty$}] 
\label{lem:dro_quantile_refined}
Let $R^{(0)}, R^{(1)}, \ldots, R^{(K)}$ be $K + 1$ independent and identically distributed random variables. Then, the probabilistic prediction region in equation \eqref{eq:cal_cond_goal} is valid for the choice:
\begin{equation}
\label{eq: alpha_dro_bound}
\bar{\alpha}_{\mathrm{DRO}} := \Quant_{1 - \delta}(R^{1:K}) + r_K(\beta), 
\end{equation}
where the Wasserstein ball $\mathcal{M}^{r_K(\beta)}(\widehat{\bbP}_K)$ is defined as in \eqref{eq: wasserstein_ball_def} and the radius \(r_K(\beta)\) is chosen so that \eqref{eq: wasserstein_radius_guarantee} is satisfied. Lemma~\ref{lem:choice-r} provides one choice of such a radius. Moreover, for every realization of the calibration sample,
\begin{equation}
\label{eq:dro_full_ball_coverage}
\inf_{\mathbb{Q}\in \mathcal{M}^{r_K(\beta)}(\widehat{\bbP}_K)}
\mathbb{Q}(R^{(0)} \le \bar{\alpha}_{\mathrm{DRO}})
\ge 1 - \delta.
\end{equation}
\resultend
\end{lemma}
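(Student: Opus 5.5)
The plan is to prove the deterministic statement \eqref{eq:dro_full_ball_coverage} first, and then obtain \eqref{eq:cal_cond_goal} from it by intersecting with the high-probability event of \eqref{eq: wasserstein_radius_guarantee}. Write $r=r_K(\beta)$ and $\hat q:=\Quant_{1-\delta}(R^{1:K})=q_{1-\delta}(\widehat{\bbP}_K)$.

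\textbf{Step 1 (deterministic).} Fix a realization of the calibration sample and any $\bbQ\in\calM^{r}(\widehat{\bbP}_K)$.

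First, I need a coupling $\pi\in\Gamma(\bbQ,\widehat{\bbP}_K)$ with $|x_1-x_2|\le r$ $\pi$-a.s. I would obtain it from the fact that the infimum in \eqref{eq: wasserstein_infty_distance} is attained; on the real line one can take the monotone (quantile) coupling. If attainment is in doubt, I would instead use couplings with radius $r+\epsilon$ and let $\epsilon\downarrow0$ at the end, using right-continuity of CDFs.

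Given such a $\pi$, on the event $\{x_2\le \hat q\}$ we have $x_1\le x_2+r\le \hat q+r=\bar\alpha_{\mathrm{DRO}}$. Therefore
\[
\bbQ(R\le \bar\alpha_{\mathrm{DRO}})=\pi(x_1\le \hat q+r)\ge \pi(x_2\le \hat q)=\widehat{\bbP}_K(R\le\hat q)\ge 1-\delta.
\]
The last inequality holds because $\hat q$ is the $\lceil K(1-\delta)\rceil$-th order statistic, so at least $\lceil K(1-\delta)\rceil$ of the $K$ atoms lie at or below it. Equivalently, it follows from the definition \eqref{eq:quantile_definition} together with right-continuity of the empirical CDF. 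Taking the infimum over $\bbQ$ gives \eqref{eq:dro_full_ball_coverage} for every realization.

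\textbf{Step 2 (probabilistic).} Let $E:=\{W_\infty(\widehat{\bbP}_K,\bbP)\le r\}$. By the assumed choice of radius (for instance via Lemma~\ref{lem:choice-r}), $\bbP^K(E)\ge 1-\beta$. On $E$ we have $\bbP\in\calM^{r}(\widehat{\bbP}_K)$, so Step 1 yields $\bbP(R^{(0)}\le\bar\alpha_{\mathrm{DRO}}(R^{1:K}))\ge1-\delta$. Here the independence of $R^{(0)}$ from $R^{1:K}$ lets us evaluate the inner probability with the threshold held fixed. Hence the event in \eqref{eq:cal_cond_goal} contains $E$, and its $\bbP^K$-probability is at least $1-\beta$.

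\textbf{Main obstacle.} The main difficulty is the technical one in Step 1: justifying a coupling that achieves $|x_1-x_2|\le r$ almost surely, and handling the inequality $\ge$ versus $>$ at atoms of the empirical measure. I expect the one-dimensional quantile-function characterization $W_\infty(\bbP,\bbQ)=\sup_{u\in(0,1)}|F_\bbP^{-1}(u)-F_\bbQ^{-1}(u)|$ to settle both points directly. Indeed, it gives $q_{1-\delta}(\bbQ)\le q_{1-\delta}(\widehat{\bbP}_K)+r$, and coverage then follows from the definition of the quantile. This is the route I would actually write down; measurability of $\bar\alpha_{\mathrm{DRO}}$ is immediate since it is an order statistic plus a constant.
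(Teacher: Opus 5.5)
Your proposal is correct and, in the form you say you would actually write down, it is the paper's proof. The paper also uses the one-dimensional identity $W_\infty(\mathbb{P}_1,\mathbb{P}_2)=\sup_{u\in(0,1)}|q_u(\mathbb{P}_1)-q_u(\mathbb{P}_2)|$ to get $q_{1-\delta}(\mathbb{Q})\le q_{1-\delta}(\widehat{\bbP}_K)+r_K(\beta)$ and specializes to $\bbP$ on the event of \eqref{eq: wasserstein_radius_guarantee}; it just handles $\bbP$ first and the whole ball second, and your coupling version of Step~1 is likewise valid (it is the transfer argument the paper uses for Lemma~\ref{lem:cp_shift}).
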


\begin{proof}
We begin by noting that, by the choice of $r_K(\beta)$, the true distribution $\bbP$ lies within the Wasserstein ball centered at $\widehat{\bbP}_K$ with probability at least $1-\beta$:
\begin{equation}
\label{eq:proof_dro_quantile_in_ball}
\bbP^K\left(
\bbP\in\mathcal{M}^{r_K(\beta)}(\widehat{\bbP}_K)
\right)\ge 1-\beta.
\end{equation}
Next, we show that the $p$-quantile functional on probability measures over $\mathbb{R}$ is 1-Lipschitz with respect to the Wasserstein distance $W_{\infty}$.
In one dimension, we have the exact representation
\begin{equation}
\label{eq:proof_dro_Winf_quantiles}
  W_{\infty}(\mathbb{P}_1,\mathbb{P}_2)
  \;=\;
  \sup_{u\in(0,1)}
    \bigl|q_u(\mathbb{P}_1)-q_u(\mathbb{P}_2)\bigr|,
\end{equation}
so for every fixed $u\in(0,1)$,
\begin{equation}
\label{eq:proof_dro_quantile_lipschitz_infty}
  \bigl|q_u(\mathbb{P}_1)-q_u(\mathbb{P}_2)\bigr|
  \;\le\;
  W_{\infty}(\mathbb{P}_1,\mathbb{P}_2).
\end{equation}
Setting $u = 1 - \delta$ and applying this to $\mathbb{P}$ and the empirical distribution $\widehat{\bbP}_K$ yields:
\begin{equation}
\label{eq:proof_dro_quantile_shift_bound}
\left| q_{1-\delta}(\mathbb{P}) - q_{1-\delta}(\widehat{\bbP}_K) \right| \le W_{\infty}(\mathbb{P}, \widehat{\bbP}_K).
\end{equation}
Thus, by \eqref{eq:proof_dro_quantile_in_ball} and the definition
of $\bar{\alpha}_{\mathrm{DRO}}$ in \eqref{eq: alpha_dro_bound}, with
probability at least $1-\beta$ over the calibration samples, we have
\begin{equation}
\label{eq:proof_dro_upper_bound}
q_{1-\delta}(\mathbb{P})
\le
q_{1-\delta}(\widehat{\bbP}_K)+r_K(\beta)
=
\bar{\alpha}_{\mathrm{DRO}} .
\end{equation}
By the definition of the quantile, this implies:
\begin{equation}
\label{eq:proof_dro_final_result}
\mathbb{P}^K\left(
\mathbb{P}(R^{(0)} \le \bar{\alpha}_{\mathrm{DRO}})
\ge 1-\delta
\right)
\ge 1-\beta.
\end{equation}

Moreover, the same quantile Lipschitz argument applies to any
$\mathbb{Q}\in\mathcal{M}^{r_K(\beta)}(\widehat{\bbP}_K)$, since
$W_\infty(\mathbb{Q},\widehat{\bbP}_K)\le r_K(\beta)$ implies
\[
q_{1-\delta}(\mathbb{Q})
\le
q_{1-\delta}(\widehat{\bbP}_K)+r_K(\beta)
=
\bar{\alpha}_{\mathrm{DRO}}.
\]
Therefore,
$\mathbb{Q}(R^{(0)}\le \bar{\alpha}_{\mathrm{DRO}})\ge 1-\delta$
for every
$\bbQ\in\mathcal{M}^{r_K(\beta)}(\widehat{\bbP}_K)$, which gives
\eqref{eq:dro_full_ball_coverage}.
\end{proof}

\begin{table*}[t]
\centering
\small
\caption{CP and DRO estimators without distribution shift
($\bbP_{\mathrm{test}}=\bbP$).}
\label{tab:comparison_without_shift}
\setlength{\tabcolsep}{4pt}
\renewcommand{\arraystretch}{1.2}
\resizebox{\linewidth}{!}{
\begin{tabular}{@{}p{4.5cm}p{2.5cm}p{5.5cm}p{5.5cm}p{1.8cm}@{}}
\toprule
\textbf{Method} &
\textbf{Shift model} &
\textbf{Coverage guarantee} &
\textbf{Estimator $\displaystyle\bar\alpha(R^{1:K})$} &
\textbf{Result} \\
\midrule
%---------------------------------------------------------------------
\textbf{Cal.-conditional CP} &
$\bbP_{\mathrm{test}}=\bbP$ &
$\displaystyle
\bbP^K\Bigl[
\bbP\{R^{(0)}\le\bar\alpha\}\ge 1-\delta
\Bigr]\ge 1-\beta$ &
$\displaystyle
\Quant_{\,
1-\delta+\sqrt{\frac{\ln(1/\beta)}{2K}}}
\bigl(R^{1:K},\infty\bigr)$ &
Lemma~\ref{lem:2} \\[8pt]
%---------------------------------------------------------------------
\textbf{Cal.-conditional CP (variant 1)} &
$\bbP_{\mathrm{test}}=\bbP$ &
$\bbP^K\Bigl[
\bbP\{R^{(0)}\le\bar\alpha\}\ge 1-\delta
\Bigr]\ge 1-\beta$ &
$\displaystyle
\Quant_{\,1-\delta'}
\bigl(R^{1:K},\infty\bigr)$ &
Lemma~\ref{lem:3} \\[8pt]
%---------------------------------------------------------------------
\textbf{Cal.-conditional CP (variant 2)} &
$\bbP_{\mathrm{test}}=\bbP$ &
$\bbP^K\Bigl[
\bbP\{R^{(0)}\le\bar\alpha\}\ge 1-\delta
\Bigr]\ge 1-\beta$ &
$\displaystyle
\Quant_{\,
1-\delta+\sqrt{\frac{2\delta\ln(1/\beta)}{K}}
+\frac{2\ln(1/\beta)}{K}}
\bigl(R^{1:K},\infty\bigr)$ &
Lemma~\ref{lem:4} \\[8pt]
\textbf{DRO} &
$\bbP_{\mathrm{test}}=\bbP$ &
$\bbP^K\Bigl[
\bbP\{R^{(0)}\le\bar\alpha\}\ge 1-\delta
\Bigr]\ge 1-\beta$ &
$\displaystyle
\Quant_{1-\delta}(R^{1:K})
+r_K(\beta)$ &
Lemma~\ref{lem:dro_quantile_refined} \\
%---------------------------------------------------------------------
\bottomrule
\end{tabular}}
\end{table*}
%
% \marginJC{Why do we write here (7) and (8) instead of the quantile expressions?}
%

\begin{figure}[t]
    \centering
    \includegraphics[width=\linewidth]{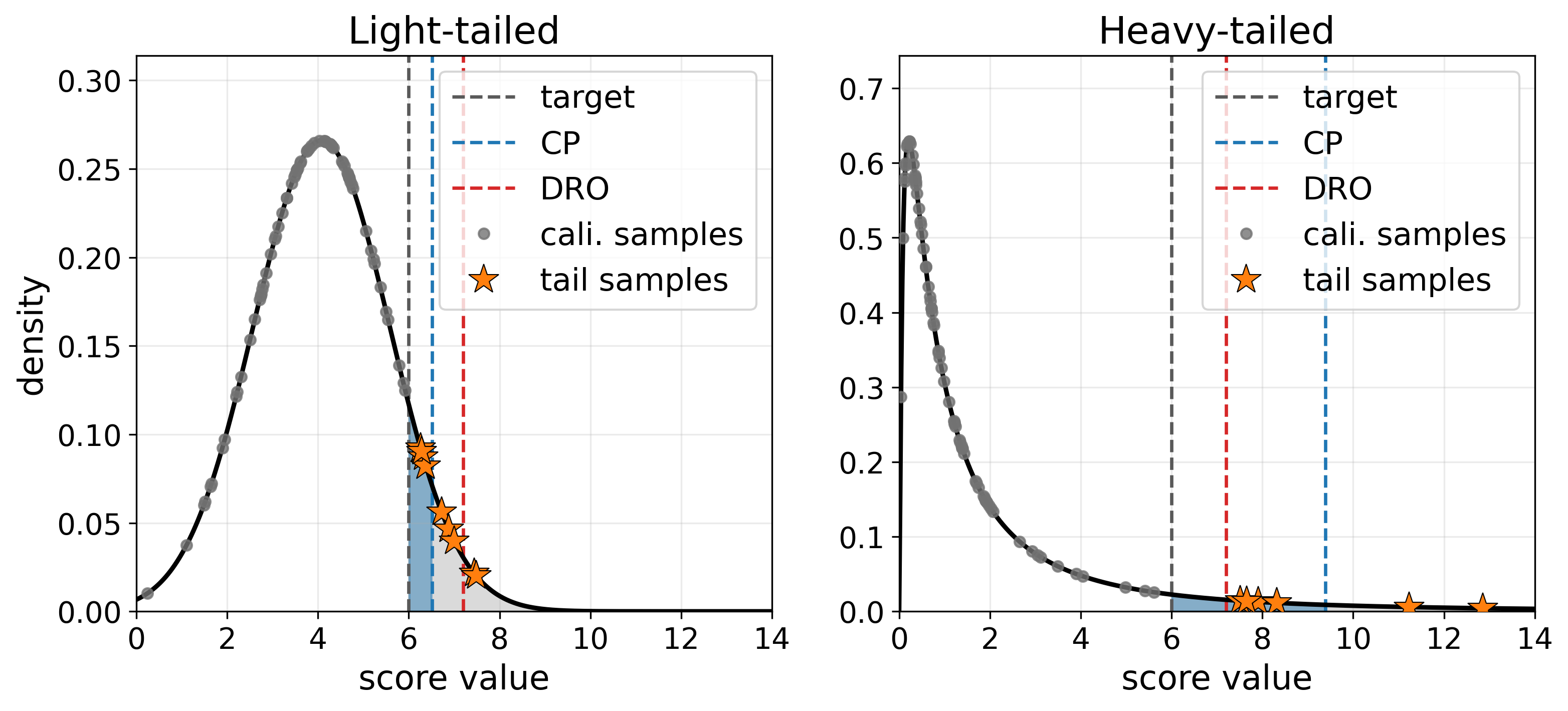}
    \caption{The two corrections act on different axes: CP shifts the quantile \emph{index}, while DRO shifts the quantile \emph{value}. Dots are calibration samples placed on the density at their value; stars are the upper-tail samples ($>q_{1-\delta}$) that CP's level inflation $1-\delta \to 1-\delta+\epsilon$ relies on. For a light-tailed distribution
    (left), these samples cluster near the $(1-\delta)$ quantile, so the CP threshold barely moves; for a heavy-tailed distribution (right), the samples are sparse and far out, pushing the CP threshold deep into the tail, whereas the DRO radius $r$ adds a similar value-space margin in both. The shown radius $r$ is illustrative, not the guaranteed radius $r_K(\beta)$ of Lemma~\ref{lem:choice-r}.}
    \label{fig:cp_dro_intuition}
\end{figure}

Because Lemma~\ref{lem:marginal_coverage} is stated for an arbitrary measurable $\bar\alpha$, it applies to $\bar\alpha_{\mathrm{DRO}}$: the guarantee of Lemma~\ref{lem:dro_quantile_refined} implies the marginal coverage bound $\mathbb{P}^{K+1}(R^{(0)}\le\bar\alpha_{\mathrm{DRO}}(R^{1:K}))\ge(1-\delta)(1-\beta)$.

\begin{remark}\longthmtitle{Robust joint coverage over the ambiguity ball}
  The guarantee in \eqref{eq:dro_full_ball_coverage} holds for every realization of the calibration sample, so it survives taking expectations. Since the ball $\mathcal{M}^{r_K(\beta)}(\widehat{\bbP}_K)$ is centered at the \emph{random} empirical measure $\widehat{\bbP}_K$, the worst-case test distribution must be modeled as a measurable data-dependent selection $R^{1:K}\mapsto\mathbb{Q}_{R^{1:K}}\in\mathcal{M}^{r_K(\beta)}(\widehat{\bbP}_K)$, and
  \[
  \mathbb{E}^K\!\left[
  \inf_{\mathbb{Q}\in\mathcal{M}^{r_K(\beta)}(\widehat{\bbP}_K)}
  \mathbb{Q}\bigl(R^{(0)}\le\bar\alpha_{\mathrm{DRO}}\bigr)\right]
  \;\ge\;1-\delta ,
  \]
  equivalently $\inf_{\mathbb{Q}}\Pr_{R^{1:K},\,R^{(0)}\sim\mathbb{Q}_{R^{1:K}}}\{R^{(0)}\le\bar\alpha_{\mathrm{DRO}}\}\ge 1-\delta$,
  the infimum being over all such selections. No factor $1-\beta$ appears here, in contrast to Lemma~\ref{lem:marginal_coverage}: the confidence level enters only through the radius $r_K(\beta)$, and once that radius is fixed \eqref{eq:dro_full_ball_coverage} holds surely rather than with probability $1-\beta$.
  \resultend
\end{remark}

So far, we have presented CP- and DRO-based estimators of $\bar{\alpha}$ in \eqref{eq:cal_cond_goal}, with their connections and key differences summarized in Table~\ref{tab:comparison_without_shift}. Both inflate the empirical quantile to hedge against the gap between $\widehat{\bbP}_K$ and $\bbP$, but along different axes: CP raises the probability level at which the empirical quantile is evaluated, from the nominal $1-\delta$ to $1-\delta+\epsilon$, where the level inflation $\epsilon>0$ is the finite-sample correction supplied by Lemmas~\ref{lem:2}--\ref{lem:4}; DRO instead enlarges the quantile value through a Wasserstein ambiguity radius. We refer to these as the \emph{level} and \emph{value} coordinates of the correction, the first living on the probability axis and the second in the units of the score. Their relative behavior depends on the tail of the underlying distribution, which we make precise as follows.

\begin{definition}[Light- and heavy-tailed distributions]
  \label{def:tails}
  A distribution $\mathbb{P}$ is \emph{light-tailed} if its right tail decays at least
  exponentially, i.e., there exist constants $C,a>0$ such that $\mathbb{P}(R>t)\le C e^{-at}$
  for all $t\ge 0$; otherwise $\mathbb{P}$ is \emph{heavy-tailed}.
\end{definition}

Figure~\ref{fig:cp_dro_intuition} shows how the tail of the score distribution governs the two corrections. Since CP evaluates an upper-tail order statistic, \(\epsilon\) barely moves the threshold for a light-tailed score, whose upper samples are dense near the quantile, but can push it much farther for a heavy-tailed score, whose upper samples are sparse~\cite{cleaveland2024conformal}. DRO instead adds a radius \(r\) directly in the score's units, determined by \(K\), \(\beta\), and the ambiguity-set construction rather than by the realized sample spacing. Sections~\ref{sec:examples} and~\ref{sec:exp_setup_cp_cls} evaluate both estimators on this basis. Calibration-conditional CP reliably attains the two-level guarantee~\eqref{eq:cal_cond_goal} with compact prediction sets and is our default. Its main weakness is heavy-tailed scores at small \(K\), where the level inflation may substantially overshoot the true quantile; in the extreme case, the selected threshold is \(\infty\), yielding a vacuous set. In this regime, a tuned DRO radius can be more accurate, although for bounded discrete scores the resulting DRO set may also become vacuous.

The two corrections are related: for a fixed calibration sample, both raise the threshold above the empirical $(1-\delta)$-quantile, CP in discrete steps through the upper order statistics and DRO continuously by $r$. They differ in what the data can \emph{certify}. CP's inflation follows from distribution-free concentration of the empirical CDF, via the Dvoretzky--Kiefer--Wolfowitz (DKW) or Hoeffding inequality, so the closed form $\epsilon=\sqrt{\ln(1/\beta)/(2K)}$ certifies~\eqref{eq:cal_cond_goal} for every $\bbP$. Achieving the same margin in value space costs a factor of the inverse density: the quantile function is the inverse of the CDF, so its slope at level $p$ is $1/f$, and a level margin $\epsilon$ therefore corresponds to a value margin $\epsilon/f$. Because $W_\infty$ requires the bound to hold across the whole support, the certified radius must use the uniform lower bound $m\le f$, giving $r_K(\beta)=\tfrac{1}{m}\sqrt{\ln(2/\beta)/(2K)}$ (Lemma~\ref{lem:choice-r}) and making the DRO shift the provably larger of the two (Section~\ref{subsec:stat_props}). The bound $m$ is usually unknown, so in practice $r$ is tuned and the guarantee is lost. In return, DRO certifies coverage uniformly over the ambiguity ball (Table~\ref{tab:comparison_without_shift}), at the price of conservativeness. CP is therefore the default for distribution-free coverage on i.i.d.\ data, while DRO returns a finite threshold at small $K$, where CP returns
$\infty$, and extends to test-time shift (Section~\ref{subsec:distribution_shift}).

%
%
% \marginJC{Shouldn't we comment here on how/when/under which conditions one approach will result in a better estimate than the other? And the sensibleness of the conditions? Right now, we present parallel results for both approaches, but beyond noting differences (adjusting index vs value), we don't quite exploit what we've derived to provide more insight.}
%

\subsection{Statistical Properties of CP and DRO Estimators}
\label{subsec:stat_props}

Beyond coverage guarantees, it is important to understand the statistical behavior of the CP and DRO estimators, including consistency, finite-sample distributions, and asymptotic behavior because these properties guide how conservative or efficient each method is in practice. We first establish the statistical properties of the CP quantile estimator introduced in Lemma~\ref{lem:2}, whose level correction $\sqrt{\ln(1/\beta)/(2K)}$ is available in closed form, whereas the level $\delta'$ of Lemma~\ref{lem:3} is defined implicitly by a beta-function condition. Analogous results hold for the variants in \eqref{eq: cali_cp_lemma3}.

\begin{lemma}\longthmtitle{Statistical Properties of CP Quantile Estimation}\label{lem:statistical-properties-cp-estimation}
    Let $\bar{\alpha}_{\mathrm{CP}}$ be defined as in Lemma~\ref{lem:2}, let $F$ denote the
    cumulative distribution function of $R^{(0)}$, and let $\alpha$ denote the $(1-\delta)$
    quantile of $R^{(0)}$ as defined in Problem~\ref{prob: UQ}. Suppose $F$ is continuous and
    strictly increasing in a neighborhood of $\alpha$, then
    $\bar{\alpha}_{\mathrm{CP}}\to\alpha$ almost surely as $K\to\infty$.
    Additionally, for any $t\in\mathbb{R}$,
    \begin{align}\label{eq:distribution-alphabar-cp}
        \mathbb{P}^K\big( \bar{\alpha}_{\mathrm{CP}} \leq t \big)
        = \sum_{i=m_\mathrm{CP}}^{K} \binom{K}{i} F(t)^i \big(1-F(t)\big)^{K-i},
    \end{align}
    where $m_{\mathrm{CP}} := \ceil*{(K+1)\,p_K}$ and $p_K := 1-\delta+\sqrt{\ln(1/\beta)/(2K)}$. Furthermore, if $F$ is differentiable at $\alpha$ with density $f(\alpha) > 0$, then as $K\to\infty$:
    \begin{equation}\label{eq:convergence-in-distribution-cp}
        \sqrt{K}(\bar{\alpha}_{\mathrm{CP}}-\alpha) \xrightarrow{d} {}\frac{\sqrt{\delta (1-\delta)}}{f(\alpha) } N(0,1) 
        + \frac{1}{f(\alpha)}\sqrt{\frac{\ln(1/\beta)}{2}},
    \end{equation}
    where $\;\xrightarrow{d}$ denotes convergence in distribution and $N(0,1)$ the standard normal distribution. \resultend
    %and $f(\cdot)$ is the probability density function of $R^{(0)}$. 
\end{lemma}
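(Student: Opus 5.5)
Throughout, $\bar{\alpha}_{\mathrm{CP}}$ is an order statistic, and each claim will be reduced to a statement about it. By definition, $\Quant_{p_K}(R^{1:K},\infty)$ is the $\ceil*{(K+1)p_K}$-th smallest of the $K+1$ values $R^{(1)},\ldots,R^{(K)},\infty$. Whenever $m_{\mathrm{CP}}=\ceil*{(K+1)p_K}\le K$, this is the order statistic $R_{(m_{\mathrm{CP}})}$ of the calibration scores. Since $p_K\to 1-\delta<1$, this condition holds for all large $K$. Under continuity of $F$ ties occur with probability zero, and only large $K$ matters for the asymptotic claims.

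\emph{Finite-sample distribution.} Fix $t$. The event $\{R_{(m)}\le t\}$ coincides with the event that at least $m$ of the $K$ scores are at most $t$. The number $N_t=\sum_{i=1}^K \mathds{1}\{R^{(i)}\le t\}$ of such scores is $\mathrm{Binomial}(K,F(t))$. Summing its probability mass function from $m_{\mathrm{CP}}$ to $K$ gives \eqref{eq:distribution-alphabar-cp}. If $m_{\mathrm{CP}}=K+1$, the threshold equals $\infty$ and the empty sum is $0$, which is still consistent for finite $t$.

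\emph{Consistency.} I would use Glivenko--Cantelli, or the DKW inequality combined with Borel--Cantelli, to obtain $\sup_t|\widehat F_K(t)-F(t)|\to 0$ almost surely. The CP estimator is the empirical quantile at the level $m_{\mathrm{CP}}/K$. This level converges to $1-\delta$, because $\sqrt{\ln(1/\beta)/(2K)}\to 0$ and the $(K+1)/K$ and ceiling effects are $O(1/K)$. Fix $\varepsilon>0$. Strict monotonicity of $F$ near $\alpha$ gives $F(\alpha-\varepsilon)<1-\delta<F(\alpha+\varepsilon)$. Uniform convergence then forces $\widehat F_K(\alpha-\varepsilon)<m_{\mathrm{CP}}/K\le\widehat F_K(\alpha+\varepsilon)$ for all large $K$, almost surely. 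Hence $|\bar\alpha_{\mathrm{CP}}-\alpha|\le\varepsilon$ eventually, which proves almost sure convergence.

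\emph{Asymptotic distribution.} This is the main step. Write $m_{\mathrm{CP}}/K=1-\delta+c/\sqrt K+o(1/\sqrt K)$ with $c=\sqrt{\ln(1/\beta)/2}$. I would compute $\mathbb{P}^K(\sqrt K(\bar\alpha_{\mathrm{CP}}-\alpha)\le x)$ directly from the binomial representation above. Set $t_K=\alpha+x/\sqrt K$. The event $\{R_{(m)}\le t_K\}$ equals $\{N_{t_K}\ge m_{\mathrm{CP}}\}$. Differentiability at $\alpha$ gives $F(t_K)=1-\delta+f(\alpha)x/\sqrt K+o(1/\sqrt K)$. Standardize $N_{t_K}$, a binomial whose success probability varies with $K$ but stays bounded away from $0$ and $1$. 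The Lindeberg (or Berry--Esseen) CLT for triangular arrays gives
\[
\mathbb{P}^K\bigl(N_{t_K}\ge m_{\mathrm{CP}}\bigr)\to \mathbb{P}\Bigl(Z\ge \tfrac{c-f(\alpha)x}{\sqrt{\delta(1-\delta)}}\Bigr)=\Phi\Bigl(\tfrac{f(\alpha)x-c}{\sqrt{\delta(1-\delta)}}\Bigr).
\]
This limit is the CDF of $\frac{\sqrt{\delta(1-\delta)}}{f(\alpha)}Z+\frac{c}{f(\alpha)}$, which is \eqref{eq:convergence-in-distribution-cp}.

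The main obstacle is bookkeeping the $o(1/\sqrt K)$ errors in this last step. These come from the ceiling, the $(K+1)$ versus $K$ factor, and the Taylor remainder of $F$, and each must vanish after multiplication by $\sqrt K$. Convergence must also hold at every continuity point $x$; this is automatic here because the limit CDF is continuous. As an alternative, one could invoke the Bahadur representation of sample quantiles at the shifted level $1-\delta+c/\sqrt K$. The direct binomial computation, however, avoids requiring a density in a whole neighborhood.
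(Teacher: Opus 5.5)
Your proposal is correct. The finite-sample formula uses the same binomial-count argument the paper relies on (the paper simply cites the order-statistic distribution), but the other two parts follow a different route.

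For consistency, the paper invokes strong consistency of sample quantiles at fixed levels. It sandwiches $\bar\alpha_{\mathrm{CP}}$ between $\Quant_{1-\delta}(R^{1:K},\infty)$ and $\Quant_{1-\delta+\epsilon}(R^{1:K},\infty)$ and then lets $\epsilon\to0^+$. You instead work directly with $\widehat F_K(\alpha\pm\varepsilon)$ and the level $m_{\mathrm{CP}}/K$ on a single Glivenko--Cantelli event. This is self-contained and handles all $\varepsilon$ at once.

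For the limit law, the paper splits $\sqrt K(\bar\alpha_{\mathrm{CP}}-\alpha)$ into two terms and combines them with Slutsky. The fluctuation term $\sqrt K\bigl(\bar\alpha_{\mathrm{CP}}-F^{-1}(\hat p_K)\bigr)$ is handled by a cited CLT for sample quantiles at a converging level. The deterministic drift $\sqrt K\bigl(F^{-1}(\hat p_K)-F^{-1}(1-\delta)\bigr)$ is handled by differentiating $F^{-1}$ at $1-\delta$. You instead evaluate $\mathbb{P}^K\bigl(\bar\alpha_{\mathrm{CP}}\le\alpha+x/\sqrt K\bigr)=\mathbb{P}(N_{t_K}\ge m_{\mathrm{CP}})$ and apply a triangular-array CLT to the binomial count. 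Your route reuses the finite-sample representation and uses only differentiability of $F$ at $\alpha$. It needs no inverse-function step and no external quantile CLT with a moving centering; it is essentially the classical proof of the result the paper imports.

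What the paper's decomposition buys is interpretability. It separates the random term $\frac{\sqrt{\delta(1-\delta)}}{f(\alpha)}N(0,1)$, which is shared verbatim with the DRO estimator of Lemma~\ref{lem:statistical-properties-dro-quantile-estimation}, from the deterministic offset created by level inflation. That split is exactly what the subsequent CP-versus-DRO comparison uses. In your computation it appears only implicitly, as the constant $c$ in $c-f(\alpha)x$.

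Two minor remarks. First, $\{R_{(m)}\le t\}=\{N_t\ge m\}$ holds for any $F$, so the comment on ties is unnecessary. The vacuous case should read $m_{\mathrm{CP}}\ge K+1$, since $p_K$ can exceed $1$ for small $K$. Second, passing from $Z_K\xrightarrow{d}N(0,1)$ to $\mathbb{P}(Z_K\ge a_K)\to\mathbb{P}(Z\ge a)$ with $a_K\to a$ needs Slutsky or P\'olya's theorem. Berry--Esseen, which you mention, provides this directly.
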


\begin{proof}
    By~\cite[Theorem 1]{LJH-GL:11}, we have
    $\Quant_{1-\delta}(R^{1:K},\infty)$ converges to $\alpha$ with
    probability one, whereas, for any fixed $\epsilon\in(0,\delta)$,
    $\Quant_{1-\delta+\epsilon}(R^{1:K},\infty)$ converges to the
    $(1-\delta+\epsilon)$ quantile of $R$ with probability one. Since $p_K-(1-\delta)=\sqrt{\ln(1/\beta)/(2K)}\to0$, for every fixed
    $\epsilon>0$, there exists $K_\epsilon$ such that $1-\delta\leq p_K\leq1-\delta+\epsilon$ for all $K\geq K_\epsilon$.
    By the monotonicity of quantiles,
    \begin{equation*}
    \Quant_{1-\delta}(R^{1:K},\infty)
    \leq \bar{\alpha}_{\mathrm{CP}}
    \leq \Quant_{1-\delta+\epsilon}(R^{1:K},\infty).
    \end{equation*}
    Taking the limit as $K\to\infty$ and then letting $\epsilon\to0^+$, the continuity and strict
    monotonicity of $F$ in a neighborhood of $\alpha$ imply that:
    \[
        \bar{\alpha}_{\mathrm{CP}} \longrightarrow \alpha
        \qquad \text{almost surely}.
    \]
    On the other hand,~\eqref{eq:distribution-alphabar-cp} follows from
    \cite[Section 2.3.4]{RJS:80}.

    We now show~\eqref{eq:convergence-in-distribution-cp}. Since
    $p_K\to1-\delta<1$, we have $p_K\leq K/(K+1)$ for all sufficiently large $K$. Hence, $m_{\mathrm{CP}}=\ceil*{(K+1)p_K}\leq K$, so the appended point $\infty$ is eventually inactive. Let $\hat p_K:=m_{\mathrm{CP}}/K$. Since $\hat p_K-p_K=O(K^{-1})$ and $p_K$ approaches $1-\delta$ at the rate $K^{-1/2}$, the quantile inflation contributes a deterministic offset that persists in the limit. We isolate the two effects by writing
    \begin{align*}
    \sqrt{K}\,(\bar{\alpha}_{\mathrm{CP}}-\alpha)
    &= \underbrace{\sqrt{K}\,\bigl(\bar{\alpha}_{\mathrm{CP}}-F^{-1}(\hat p_K)\bigr)}
    _{\text{fluctuation}} \\
    &\quad+
    \underbrace{\sqrt{K}\,\bigl(F^{-1}(\hat p_K)-F^{-1}(1-\delta)\bigr)}
    _{\text{drift}}.
    \end{align*}
    For the fluctuation term, $\bar{\alpha}_{\mathrm{CP}}$ is the empirical $\hat p_K$-quantile of $R^{1:K}$, where $\hat p_K\to1-\delta$. By the asymptotic normality of empirical quantiles at a converging level \cite[Section 2.3.3]{RJS:80},
    \begin{align*}
    \sqrt{K}\,\bigl(
    \bar{\alpha}_{\mathrm{CP}}-F^{-1}(\hat p_K)
    \bigr)
    \xrightarrow{d}
    \frac{\sqrt{\delta(1-\delta)}}{f(\alpha)}\,N(0,1).
    \end{align*}
    For the drift term, since $\hat p_K-p_K=O(K^{-1})$ and $p_K-(1-\delta)=\tfrac{1}{\sqrt{K}}\sqrt{\ln(1/\beta)/2}$, we have
    \begin{align*}
    \sqrt{K}\bigl(\hat p_K-(1-\delta)\bigr)
    \to \sqrt{\frac{\ln(1/\beta)}{2}}.
    \end{align*}
    Moreover, $F^{-1}$ is differentiable at $1-\delta$ with derivative
    $1/f(\alpha)$. Therefore, a first-order expansion gives
    \begin{align*}
    \sqrt{K}\bigl(F^{-1}(\hat p_K)-F^{-1}(1-\delta)\bigr)
    \to \frac{1}{f(\alpha)}
    \sqrt{\frac{\ln(1/\beta)}{2}}.
    \end{align*}
    Since the drift converges to a constant, Slutsky's theorem~\cite[Section 1.5.4]{RJS:80} yields~\eqref{eq:convergence-in-distribution-cp}.
\end{proof}

Next, we establish the statistical properties of the DRO quantile estimator introduced in Lemma~\ref{lem:dro_quantile_refined}.

\begin{lemma}\longthmtitle{Statistical Properties of DRO Quantile Estimation}\label{lem:statistical-properties-dro-quantile-estimation}
    Let $\bar{\alpha}_{\mathrm{DRO}}$ be defined as in Lemma~\ref{lem:dro_quantile_refined},
    let $F$ denote the cumulative distribution function of $R^{(0)}$, and let $\alpha$ be its
    $(1-\delta)$ quantile. Suppose that $F$ is continuous and strictly increasing in a neighborhood of $\alpha$ and that $r_K(\beta)\to 0$ as $K\to\infty$. Then
    $\bar{\alpha}_{\mathrm{DRO}}\to\alpha$ almost surely as $K\to\infty$.
    Additionally, for any $t\in\mathbb{R}$,
    \begin{align}\label{eq:distribution-alphabar-dro}
        \mathbb{P}^K\big( \bar{\alpha}_{\mathrm{DRO}} \leq t \big)
        = \sum_{i=m_{\mathrm{DRO}}}^{K} \binom{K}{i} F(u)^i \big(1-F(u)\big)^{K-i},
    \end{align}
    where $u := t-r_K(\beta)$ and $m_{\mathrm{DRO}} := \ceil*{K(1-\delta)}$. Furthermore, suppose that $F$ is differentiable
    at $\alpha$ with density $f(\alpha) > 0$. If $\sqrt{K}\,r_K(\beta)\to c_\beta$ for some constant $c_\beta\geq0$, then 
    \begin{equation} 
    \label{eq:dro_asympto_general} 
    \sqrt{K}\left( \bar{\alpha}_{\mathrm{DRO}}-\alpha \right) \xrightarrow{d} \frac{\sqrt{\delta(1-\delta)}}{f(\alpha)}N(0,1) +c_\beta. 
    \end{equation}
    as $K\to\infty$. This condition holds, for example, for the choice of $r_K(\beta)$ in Lemma~\ref{lem:choice-r}. \resultend
\end{lemma}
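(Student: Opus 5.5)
The plan mirrors the proof of Lemma~\ref{lem:statistical-properties-cp-estimation}, which is simpler here because the DRO estimator is a fixed-level empirical quantile plus a deterministic shift. Write $\bar{\alpha}_{\mathrm{DRO}} = \hat q_K + r_K(\beta)$, where $\hat q_K := \Quant_{1-\delta}(R^{1:K})$ is the $m_{\mathrm{DRO}}$-th order statistic, $R_{(m_{\mathrm{DRO}})}$, with $m_{\mathrm{DRO}}=\ceil*{K(1-\delta)}$.

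\emph{Consistency.} Invoke \cite[Theorem 1]{LJH-GL:11}, exactly as in the CP proof, to obtain $\hat q_K\to\alpha$ almost surely. This uses that $F$ is continuous and strictly increasing near $\alpha$, so the $(1-\delta)$-quantile is unique. Since $r_K(\beta)\to 0$ is deterministic, the sum converges almost surely to $\alpha$.

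\emph{Finite-sample distribution.} For any $t$, we have $\{\bar{\alpha}_{\mathrm{DRO}}\le t\}=\{R_{(m_{\mathrm{DRO}})}\le u\}$ with $u=t-r_K(\beta)$. This event occurs iff at least $m_{\mathrm{DRO}}$ of the $K$ i.i.d.\ scores are $\le u$. The count of such scores is $\mathrm{Binomial}(K,F(u))$, which yields \eqref{eq:distribution-alphabar-dro}; equivalently, apply \cite[Section 2.3.4]{RJS:80} at the point $u$. No continuity of $F$ is needed for this step.

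\emph{Asymptotics.} Decompose
\[
\sqrt{K}(\bar{\alpha}_{\mathrm{DRO}}-\alpha)=\sqrt{K}(\hat q_K-\alpha)+\sqrt{K}\,r_K(\beta).
\]
For the first term, the level is fixed at $1-\delta$, and $m_{\mathrm{DRO}}/K-(1-\delta)=O(K^{-1})=o(K^{-1/2})$. The classical Bahadur/central limit theorem for sample quantiles \cite[Section 2.3.3]{RJS:80}, under differentiability of $F$ at $\alpha$ with $f(\alpha)>0$, then gives convergence to $\frac{\sqrt{\delta(1-\delta)}}{f(\alpha)}N(0,1)$. The second term is deterministic and converges to $c_\beta$, so Slutsky's theorem \cite[Section 1.5.4]{RJS:80} yields \eqref{eq:dro_asympto_general}. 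For the radius of Lemma~\ref{lem:choice-r}, $\sqrt{K}\,r_K(\beta)=\frac{1}{m}\sqrt{\ln(2/\beta)/2}$ exactly, so the condition holds with $c_\beta=\frac1m\sqrt{\ln(2/\beta)/2}$.

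The only delicate point is the quantile CLT. I must make sure the cited result needs only differentiability at $\alpha$, not a density in a neighborhood, and that the $O(1/K)$ discrepancy between $\ceil*{K(1-\delta)}/K$ and $1-\delta$ is negligible at the $\sqrt{K}$ scale. If the reference requires more, I would fall back on a direct argument. Write
\[
\mathbb{P}^K\bigl(\sqrt{K}(\hat q_K-\alpha)\le x\bigr)=\mathbb{P}\bigl(\mathrm{Bin}(K,F(\alpha+x/\sqrt K))\ge m_{\mathrm{DRO}}\bigr),
\]
expand $F(\alpha+x/\sqrt K)=1-\delta+f(\alpha)x/\sqrt K+o(K^{-1/2})$, and apply the de Moivre--Laplace theorem to the binomial. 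This gives the limit $\Phi\bigl(f(\alpha)x/\sqrt{\delta(1-\delta)}\bigr)$.
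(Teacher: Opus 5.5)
Your proposal is correct and follows the same route as the paper. The paper likewise treats $\bar{\alpha}_{\mathrm{DRO}}$ as the $\lceil K(1-\delta)\rceil$-th order statistic plus the deterministic shift $r_K(\beta)$, and then cites the order-statistic distribution \cite[Section 2.3.4]{RJS:80}, strong consistency of the empirical quantile \cite[Theorem 1]{LJH-GL:11}, and the sample-quantile central limit theorem \cite[Section 2.3.3]{RJS:80} combined with Slutsky's theorem.

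The point you flag as delicate resolves in your favor. That order statistic is exactly the usual sample $(1-\delta)$-quantile, so there is no level discrepancy to absorb. The quantile central limit theorem needs only differentiability of $F$ at $\alpha$ with $f(\alpha)>0$, and your binomial fallback itself verifies this, provided you use a triangular-array form of de Moivre--Laplace (e.g.\ via Berry--Esseen), since the success probability $F(\alpha+x/\sqrt{K})$ varies with $K$.
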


The identity in \eqref{eq:distribution-alphabar-dro} follows from the distribution of order statistics~\cite[Section 2.3.4]{RJS:80}. The strong consistency follows from the consistency of empirical quantiles \cite[Theorem 1]{LJH-GL:11} together with $r_K(\beta)\to0$, while \eqref{eq:dro_asympto_general} follows from the asymptotic normality of empirical quantiles~\cite[Section 2.3.3]{RJS:80} and Slutsky's theorem~\cite[Section 1.5.4]{RJS:80}.

Lemmas~\ref{lem:statistical-properties-cp-estimation}  and~\ref{lem:statistical-properties-dro-quantile-estimation}  show that both \(\bar{\alpha}_{\mathrm{CP}}\) and \(\bar{\alpha}_{\mathrm{DRO}}\) are strongly consistent estimators of \(\alpha\). Moreover, their estimation errors are
asymptotically of order \(K^{-1/2}\) and decompose into a random sampling fluctuation and a deterministic upward correction. The random component is identical for the two
estimators and converges, after multiplication by \(\sqrt{K}\), to
\(\frac{\sqrt{\delta(1-\delta)}}{f(\alpha)}N(0,1).\)

For CP, the deterministic correction arises from inflating the empirical quantile level above \(1-\delta\), resulting in the asymptotic shift
\(\frac{1}{f(\alpha)}\sqrt{\frac{\ln(1/\beta)}{2}}\). For DRO, the correction is induced by the Wasserstein radius
\(r_K(\beta)\), and its asymptotic contribution is \(c_\beta\) whenever \(\sqrt{K}r_K(\beta)\to c_\beta\). 

Thus, the two estimators share the same first-order asymptotic variance and differ in a deterministic shift. The shifts, however, are governed by the score distribution differently: CP's is set by the density at the target quantile, whereas DRO's is inherited from the ambiguity set radius. For Lemma~\ref{lem:choice-r}, where \(\sqrt{K}\,r_K(\beta)=\frac{1}{m}\sqrt{\ln(2/\beta)/2}\) and \(m\) lower bounds the density on the whole support,
\begin{align*}
    \underbrace{\frac{1}{f(\alpha)}\sqrt{\frac{\ln(1/\beta)}{2}}}_{\text{CP shift}}
    \;<\;
    \underbrace{\frac{1}{m}\sqrt{\frac{\ln(2/\beta)}{2}}}_{\text{DRO shift}},
\end{align*}
since \(m\le f(\alpha)\). Thus, CP pays a \emph{local} price and DRO a \emph{uniform} one and, with this radius, the DRO correction is never the smaller of the two. The gap widens as the density varies more sharply over the support, which can be observed in Figure~\ref{fig:cp_dro_intuition}. As the caption notes, the value-space margin shown is illustrative, and realizing it requires a radius tighter than \eqref{eq:wasserstein_inf_radius} (Section~\ref{sec:choosing_radius}).

\subsection{Choosing the Wasserstein Radius}
\label{sec:choosing_radius}

The choice of the Wasserstein radius is central to the statistical interpretation and practical performance of DRO. Lemma~\ref{lem:choice-r} provides one sufficient choice of $r_K(\beta)$ such that:
\begin{equation}
\bbP^K\left(
\bbP\in\mathcal{M}^{r_K(\beta)}(\widehat{\bbP}_K)
\right)\ge 1-\beta.
\end{equation}
More generally, finite-sample Wasserstein radii can be constructed using measure-concentration or statistical-inference arguments \cite{fournier2015rate,gao2023finite,blanchet2019robust}. Such choices give the ambiguity set the interpretation of a high-confidence region for the unknown distribution. However, the resulting bounds may depend on unknown distributional constants and can be overly conservative in finite samples.

Alternatively, the radius may be selected by validation or treated as a design parameter controlling the trade-off between robustness and conservativeness \cite{cisneros2020distributionally, long2024sensorbased_dro, long2023dro_lf}, or calibrated around a fitted center to cover the unknown distribution~\cite{guo2026learning}. For any fixed
radius $r$, the threshold \(\Quant_{1-\delta}(R^{1:K})+r\) still guarantees coverage uniformly over $\mathcal{M}^{r}(\widehat{\bbP}_K)$. However, if $r$ is selected heuristically, the high-probability coverage guarantee for the true distribution $\bbP$ does not follow. 

CP provides another useful reference for the scale of the radius. Whenever the CP estimator $\bar\alpha_{\mathrm{CP}}$ is finite, define its effective value-space correction as
\begin{equation}
\label{eq:cp_effective_radius}
r_{\mathrm{CP}}(R^{1:K})
:=
\bar\alpha_{\mathrm{CP}}
-
\Quant_{1-\delta}(R^{1:K}).
\end{equation}
By construction,
\begin{equation}
\Quant_{1-\delta}(R^{1:K})
+r_{\mathrm{CP}}(R^{1:K})
=
\bar\alpha_{\mathrm{CP}}.
\end{equation}
Thus, $r_{\mathrm{CP}}$ translates CP's quantile-level inflation into an equivalent value-space correction for the realized calibration sample. It may therefore serve as a reference or initialization when selecting a practical DRO radius. Importantly, it is not generally a certified Wasserstein radius: CP controls the target quantile, but does not guarantee that the distribution $\bbP$ lies in $\mathcal{M}^{r_{\mathrm{CP}}}(\widehat{\bbP}_K)$ with high probability.

To interpret the scale of this correction, let $\epsilon_K:=\sqrt{\ln(1/\beta)/(2K)}$, and suppose that $\bbP$ has a
continuous density $f$ that is positive near its $(1-\delta)$-quantile $\alpha$. A first-order quantile approximation gives:
\begin{equation} \label{eq:cp_radius_local_approximation}
r_{\mathrm{CP}} \approx \frac{\epsilon_K}{f(\alpha)}.
\end{equation}
Hence, the conversion from CP's level correction to a value-space radius depends on the local density near the target quantile. For the special case $\bbP=\mathrm{Uniform}[0,1]$, where $f(\alpha)=1$, this reduces to $r_{\mathrm{CP}}\approx\epsilon_K$. For general distributions, no distribution-free constant conversion exists.

\subsection{Illustrative Examples}
\label{sec:examples}

We now use simple scalar distributions to illustrate the finite-sample mechanisms identified in the preceding analysis. The examples visualize how sample size, the shape of the score distribution, and the choice of Wasserstein radius affect the CP and DRO quantile estimators. Their purpose is to isolate and explain these theoretical effects in controlled settings, rather than to evaluate performance on a particular application. The studies in Section~\ref{sec:exp_setup_cp_cls}, instead, examine the same methods on more realistic problems.

We consider distributions with bounded, unbounded, and truncated
supports. Throughout, we set $\delta=0.1$ and $\beta=0.05$. For each
distribution and sample size $K$, we repeat the calibration procedure
over 500 independent trials, drawing a new calibration set in each trial, and report the mean and standard deviation of the
resulting quantile estimates.
%
% \marginJC{We explain here what we do, but we do not explain the goal. Why do we do it? We want to illustrate the results? Visualize the insights? Provide visual representations of the things we've unveiled? All of the above? Plus, the section after this is called "Simulations". So explain to the reader why the 2 separate sections -- I guess each one has a different purpose.}
%

\textbf{Sample size and conservativeness.}
%
% \marginJC{If I was a reader, I'd be unhappy about this title. Superdifficult to parse, almost impossible to understand what we mean, ugly to have Lemma~BLA in a title that, in general, should be self-explanatory. Worst of all, we have given no indication to the reader as to what is the issue that we're trying to analyze: I guess the choice of radius in DRO, the impact it has on the uncertanty quantification, the options opened by looking at it beyond the sufficient condition in Lemma IV.5,...}
%
Figure~\ref{fig:uniform_guarantee} compares the three CP corrections with the
DRO estimator of Lemma~\ref{lem:dro_quantile_refined} on
$\mathbb{P} = \mathrm{Uniform}[0,1]$, for which $\inf f = 1$ and the true
$(1-\delta)$-quantile is $0.9$. Panel~(a) shows the non-vacuity thresholds of
Table~\ref{tab:kmin}: Lemma~\ref{lem:3} becomes finite at $K=29$, whereas
Lemmas~\ref{lem:2} and~\ref{lem:4} require $K=170$ and $K=172$. The DRO estimator is finite at every $K$, but its certified radius
$r_K(\beta) = \sqrt{\ln(2/\beta)/(2K)}$ equals $\approx 1.358$ at $K=1$, so the
bound it returns is loose until $K$ reaches the hundreds. Panel~(b) confirms
the ordering predicted by the asymptotic shifts: once all four are available,
the certified DRO threshold is the most conservative and Lemma~\ref{lem:3} the
tightest.

\begin{figure*}[htb]
    \centering
    \includegraphics[width=\linewidth]{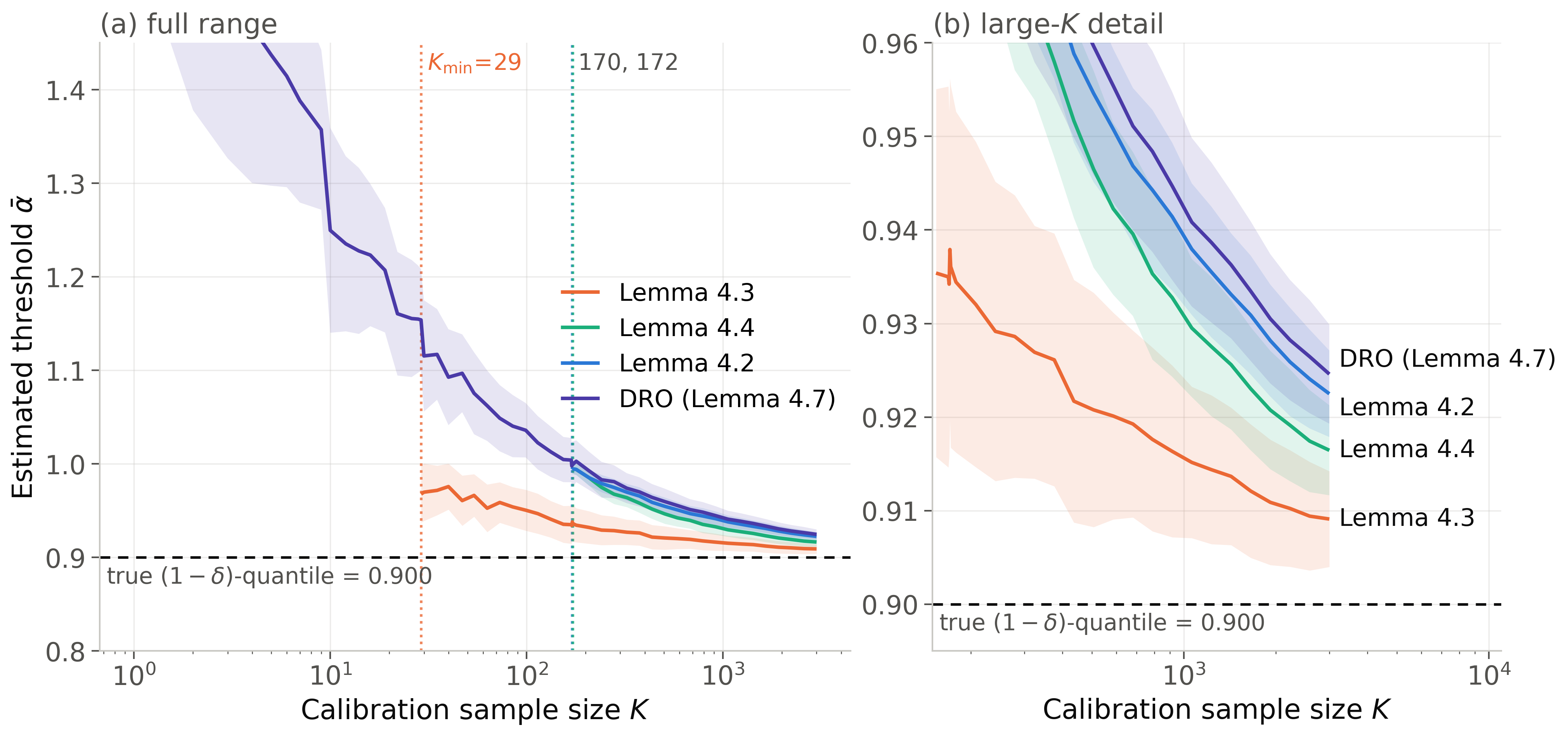}
    \caption{The three CP corrections and the certified-radius DRO
    estimator on $R \sim \mathrm{Uniform}[0,1]$ ($\delta=0.1$, $\beta=0.05$;
    mean and one standard deviation over $500$ trials).
    \textbf{(a)} Full range. Dotted verticals mark the non-vacuity thresholds
    $K_{\min}=29$ for Lemma~\ref{lem:3} and $170$, $172$ for
    Lemmas~\ref{lem:2} and~\ref{lem:4}; below these each estimator returns
    $\infty$ and is omitted. DRO is finite at every $K$, at the price of a
    large certified radius when $K$ is small.
    \textbf{(b)} Large-$K$ detail: the four estimates appear in the order Lemma~\ref{lem:3} $<$ Lemma~\ref{lem:4} $<$ Lemma~\ref{lem:2} $<$ DRO, as the asymptotic shifts predict.}
    \label{fig:uniform_guarantee}
    \vspace{-1ex}
\end{figure*}

\textbf{Effect of the DRO radius.}
We next compare CP with DRO using the simplified scaling
$r_K = r_1/\sqrt{K}$,
%
% \marginJC{This is slightly loose, wouldn't mind if we were more explicit about it. I mean, we're saying: we replace the def of $r_K(\beta)$ in (13) by this $r_K = r_1/\sqrt{K}$. I'd flat out specify it.}
%
which preserves the asymptotic $K^{-1/2}$ rate of
Lemma~\ref{lem:choice-r} while letting $r_1$ act as a tunable parameter. Figure~\ref{fig:multi_radius_panel} shows the
comparison on three distributions: $\mathrm{Uniform}[0,1]$, standard
Normal $\mathcal{N}(0,1)$, and
$\mathrm{Beta}(2,2)$. Across all $r_1 \in \{0.1, 0.5, 1, 4\}$, DRO converges to the true quantile, with larger $r_1$ giving more
conservative estimates at small $K$. The Normal needs the largest $r_1$: its density at the target quantile is the smallest of the three, so a level correction buys the least threshold there. For small $r_1$, the empirical $(1-\delta)$-quantile is often
drawn from samples that miss the upper tail, and the DRO estimate sits
below the truth before converging.

\begin{figure*}[htb]
    \centering
    \includegraphics[width=\linewidth]{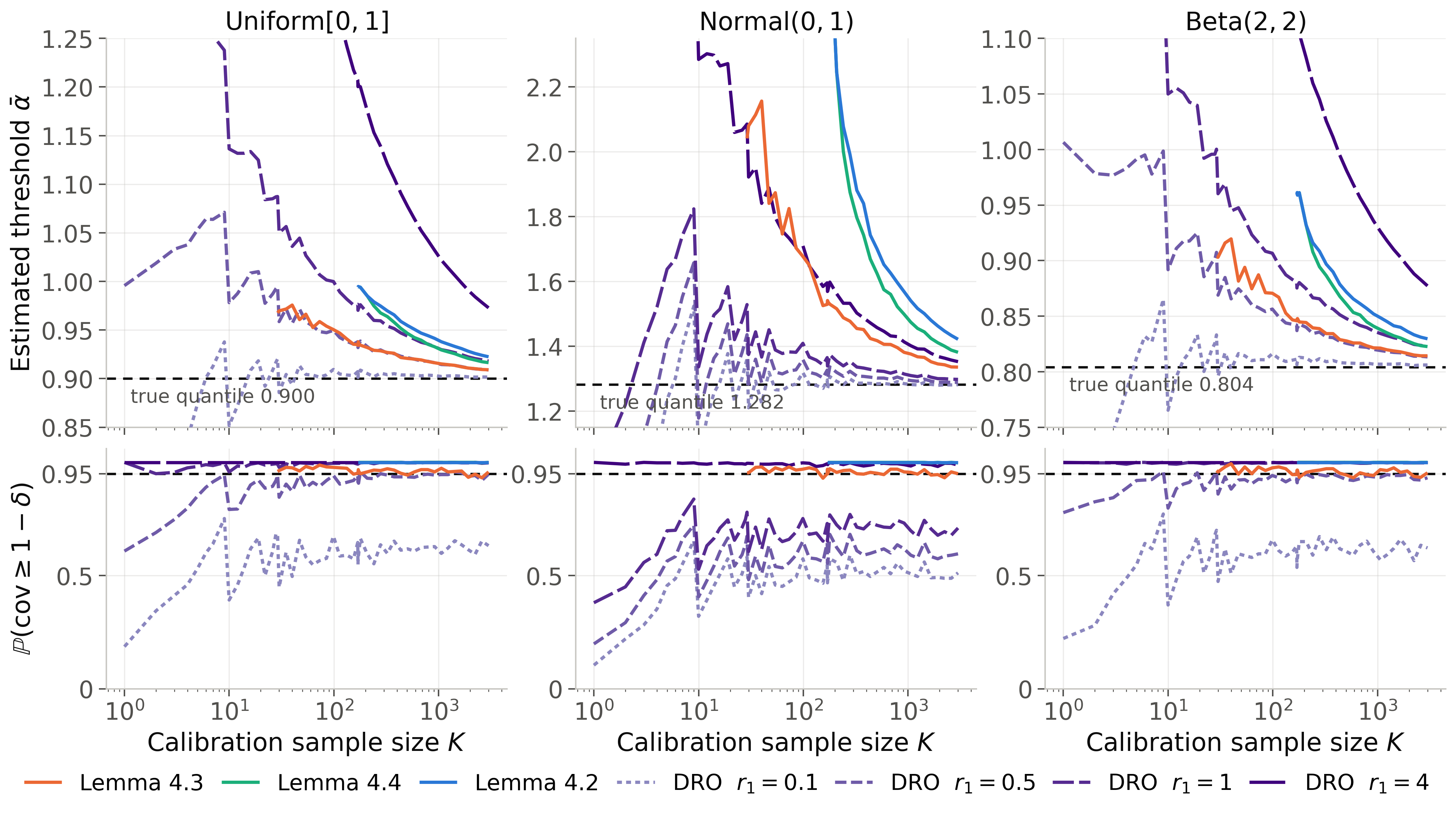}
    \caption{The three CP corrections against DRO with the simplified
    radius $r_K = r_1/\sqrt{K}$, on $\mathrm{Uniform}[0,1]$ (left), standard
    Normal (center), and $\mathrm{Beta}(2,2)$ (right) at $\delta=0.1$,
    $\beta=0.05$. DRO radii use a single-hue ramp, darker for larger $r_1$.
    \textbf{Top:} mean threshold over $500$ trials. Each CP curve begins at its
    non-vacuity threshold, and all methods converge to the true
    $(1-\delta)$-quantile (dashed) as $K$ grows.
    \textbf{Bottom:} the two-level success rate, which must lie on or above
    $1-\beta$. Lemmas~\ref{lem:2} and~\ref{lem:4} exceed the target by a wide
    margin, attaining a rate of one, whereas Lemma~\ref{lem:3} attains it with no
    slack, so its empirical rate fluctuates about $1-\beta$ within Monte Carlo
    error. DRO meets the
    target only for a large enough radius: $r_1\ge 1$ suffices on Uniform and Beta, whereas the Normal requires $r_1=4$, whose density at the target quantile is far smaller. Convergence of the estimate does not by itself imply
    the guarantee.}
    \label{fig:multi_radius_panel}
    \vspace{-2ex}
\end{figure*}

\textbf{Quantitative convergence.}
Table~\ref{tab:convergence} reports mean estimates of
$\bar\alpha_\mathrm{CP}$ and $\bar\alpha_\mathrm{DRO}$ (each at a radius whose two-level rate reaches $1-\beta$)
at
five sizes from $K=1$ to $10^4$, across five distributions including
Exponential (unbounded) and a Truncated Normal on $[-1,1]$, over 500
independent runs. Two observations emerge:
\emph{(i)} DRO yields a finite, usable estimate at $K = 1$ for all five
distributions, whereas CP is vacuous;
\emph{(ii)} at $K = 1000$, CP overshoots the true quantile by $4$--$22\%$,
with the largest overshoots on heavier-tailed supports (Exp$(1)$: $22\%$;
Normal: $21\%$; Truncated Normal: $12\%$; Uniform: $4\%$; Beta: $6\%$).
DRO, at radii that meet the two-level guarantee, stays within $3$--$10\%$ on the same five distributions.
The disparity reflects CP's $\sqrt{\ln(1/\beta)/(2K)}$ level correction,
which shifts quantile selection deep into the upper tail when the density
is small there; DRO's additive radius is symmetric.

\textbf{Discussion.}
With an appropriately chosen Wasserstein radius, DRO is competitive
with CP and, on heavier tails, tighter than the relaxation-based corrections
of Lemmas~\ref{lem:2} and~\ref{lem:4}. The advantage is most pronounced in
two regimes: low $K$, where CP is vacuous; and heavier tails, where CP's level
inflation overshoots. The optimal $r_1$ tracks the density at the
$(1-\delta)$-quantile: compactly supported distributions whose density stays
bounded away from zero there, such as $\mathrm{Uniform}[0,1]$ and
$\mathrm{Beta}(2,2)$, tolerate small $r_1$, whereas the Normal, whose density
at the quantile is far smaller, requires a larger one.

However, this comparison is not symmetric in guarantees. CP enjoys a 
distribution-free finite-sample coverage bound under only the iid 
assumption, via the DKW level correction $\sqrt{\ln(1/\beta)/(2K)}$. 
DRO requires the chosen radius to upper-bound 
$W_\infty(\widehat{\bbP}_K, \mathbb{P})$ with probability $\geq 1 - \beta$. 
Lemma~\ref{lem:choice-r} certifies this for bounded densities with 
$\inf f \geq m > 0$, which among our five distributions the Uniform and the truncated Normal
satisfy; $\mathrm{Beta}(2,2)$ has
compact support but a density vanishing at the endpoints, while the Normal and
Exponential are unbounded. For the others, the simplified scaling 
$r_K = r_1/\sqrt{K}$ inherits the asymptotic rate of 
Lemma~\ref{lem:choice-r}, but $r_1$ becomes a heuristic tuning parameter. 
The empirical performance in Table~\ref{tab:convergence} is therefore 
evidence of practical utility, not a finite-sample guarantee.

\begin{table}[htb]
    \centering
    \small
    \caption{Mean $\hat\alpha_\mathrm{CP}$ and $\hat\alpha_\mathrm{DRO}$ (each at a radius whose two-level rate reaches $1-\beta$ at every $K$ shown, given per row) across five distributions and calibration sizes $K$,
    averaged over $500$ independent runs at $\delta = 0.1$, $\beta = 0.05$.
    Last column reports the true $(1-\delta)$-quantile for reference.
    Here CP is the estimator of Lemma~\ref{lem:2}, by Remark~\ref{rem:cp_finite_sample} it is vacuous for $K<170$, which is why the first three columns are $\infty$. At $K = 1000$, CP overshoots the truth by $4$--$22\%$, with the largest overshoots on heavier-tailed
    supports (Exp(1): $22\%$; Normal: $21\%$; Trunc.\ Normal: $12\%$), while DRO, at radii that meet the two-level guarantee, stays within $3$--$10\%$.}
    \label{tab:convergence}    
    \setlength{\tabcolsep}{4pt}
    \resizebox{\linewidth}{!}{\begin{tabular}{@{}lrrrrrc@{}}
    \toprule
                              & \multicolumn{5}{c}{Calibration size $K$} & \\
    \cmidrule(lr){2-6}
    Distribution, method      & 1        & 10       & 100      & 1000  & 10000 & Truth \\
    \midrule
    Uniform, CP                & $\infty$ & $\infty$ & $\infty$ & 0.939 & 0.912 & \multirow{2}{*}{0.900} \\
    Uniform, DRO ($r_1{=}1$) & 1.496 & 1.125 & 0.991 & 0.931 & 0.910 &       \\
    \midrule
    Normal, CP                 & $\infty$ & $\infty$ & $\infty$ & 1.550 & 1.356 & \multirow{2}{*}{1.282} \\
    Normal, DRO ($r_1{=}4$) & 3.987 & 2.247 & 1.648 & 1.404 & 1.322 &       \\
    \midrule
    Beta(2,2), CP              & $\infty$ & $\infty$ & $\infty$ & 0.851 & 0.817 & \multirow{2}{*}{0.804} \\
    Beta(2,2), DRO ($r_1{=}1$) & 1.506 & 1.056 & 0.897 & 0.835 & 0.814 &       \\
    \midrule
    Exp(1), CP                 & $\infty$ & $\infty$ & $\infty$ & 2.800 & 2.433 & \multirow{2}{*}{2.303} \\
    Exp(1), DRO ($r_1{=}6$) & 7.001 & 3.750 & 2.860 & 2.483 & 2.361 &       \\
    \midrule
    Trunc.\ N., CP             & $\infty$ & $\infty$ & $\infty$ & 0.841 & 0.777 & \multirow{2}{*}{0.749} \\
    Trunc.\ N., DRO ($r_1{=}2$) & 1.991 & 1.203 & 0.931 & 0.810 & 0.769 &       \\
    \bottomrule
\end{tabular}}
\end{table}

\section{CP and DRO With Distribution Shift}
\label{subsec:distribution_shift}

In this section, we present how to solve Problem~\ref{prob:UQ_test_shift} with CP and DRO. Motivated by the calibration-conditional formulation in \eqref{eq:cal_cond_goal}, we seek a threshold $\bar{\alpha}=\bar{\alpha}(R^{1:K})$ that satisfies
\begin{equation}
\label{eq:cal_test_cond_goal}
\bbP^K\!\left(
  \bbP_{\mathrm{test}}\!\big(R^{(0)} \le \bar{\alpha}(R^{1:K})\big) \;\ge\; 1-\delta
\right) \;\ge\; 1-\beta ,
\end{equation}
for a user chosen confidence level $\beta \in (0,1)$. 

In this case, one must additionally account for the discrepancy between $\bbP$ and $\bbP_{\mathrm{test}}$.
%
% \marginJC{The wording makes it sound as in the previous section, we had not accounted for the the discrepancy between the empirical distribution $\widehat{\bbP}_K$
% and $\bbP$. But we have, no?}
%
To make the goal in Problem~\ref{prob:UQ_test_shift} attainable, this discrepancy must be
bounded. We consider two shift models, each an instance of Problem~\ref{prob: UQ} with a
different pair $(\dist,\gamma)$: a Wasserstein model, which bounds how far mass moves, and a
L\'evy--Prokhorov model, which additionally allows a fraction of the mass to move arbitrarily.

%
% \marginJC{Comment on how reasonable the assumption is (seems very). I guess knowing the value of $r$ is the part that might be trickier to justify.}
%

% Assumption~\ref{assump: test_shift_wasserstein} provides an interpretable way to quantify the discrepancy between the calibration distribution $\mathbb{P}$ and the test distribution $\mathbb{P}_{\mathrm{test}}$ \NA{This sentence is repetitive and can be removed (we already said that before and in the assumption).}. 

The radius $\eta$ controls the amount of calibration-test shift allowed by the model and leads to the additive threshold correction used below. In practice,~$\eta$ can be specified from prior knowledge, estimated from additional validation data when available, or treated as a robustness parameter to study the trade-off between validity and conservativeness.

\subsection{Wasserstein Shift Model}

\begin{Assumption}
\label{assump: test_shift_wasserstein}
The test distribution is within a $W_\infty$-ball of radius $\eta>0$ around the calibration distribution:
\begin{equation}
\label{eq: test_shift_wasserstein}
   W_\infty(\bbP,\bbP_{\mathrm{test}})\le \eta,
\end{equation}
corresponding to Problem~\ref{prob: UQ} with $\dist=W_\infty$ and $\gamma=\eta$.
\end{Assumption}

\textbf{Conformal prediction.} Under Assumption~\ref{assump: test_shift_wasserstein}, the shift is
absorbed by adding the Wasserstein radius $\eta$ to the calibration-conditional CP threshold.

\begin{lemma}[\textbf{Shifted robust CP estimator}]
\label{lem:cp_shift}
Fix $\delta,\beta\in(0,1)$. Given $R^{(1)},\ldots,R^{(K)} \sim \bbP$ i.i.d.\ and $R^{(0)}\sim \bbP_{\mathrm{test}}$ satisfying Assumption~\ref{assump: test_shift_wasserstein}, define the CP estimator $\bar{\alpha}_{\mathrm{CP}}$ as in \eqref{eq:cp_cond}, \eqref{eq: cali_cp_lemma2}, or \eqref{eq: cali_cp_lemma3}. The  test time estimator
\begin{equation}
\label{eq:cp_test_estimator}
\bar{\alpha}_{\mathrm{CP}}^{\mathrm{test}}
:= 
\bar{\alpha}_{\mathrm{CP}} + \eta ,
\end{equation}
satisfies the calibration-conditional guarantee:
\begin{equation}
\label{eq:cp_test_cond}
\bbP^{K}\!\left(
  \bbP_{\mathrm{test}}\!\big(R^{(0)} \le \bar{\alpha}_{\mathrm{CP}}^{\mathrm{test}}\big) \ge 1-\delta
\right) \ge 1-\beta . 
\end{equation} 
\resultend
\end{lemma}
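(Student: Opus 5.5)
The argument combines the calibration-conditional guarantee already established for $\bar{\alpha}_{\mathrm{CP}}$ with the $1$-Lipschitz property of quantiles under $W_\infty$. That Lipschitz property was shown in the proof of Lemma~\ref{lem:dro_quantile_refined}. The key point is that Assumption~\ref{assump: test_shift_wasserstein} is deterministic, so it costs no extra failure probability. The whole $\beta$ budget therefore stays with the CP step.

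\textbf{Step 1 (good calibration event).} Let
\[
A:=\bigl\{R^{1:K}:\ \bbP\bigl(R\le \bar{\alpha}_{\mathrm{CP}}(R^{1:K})\bigr)\ge 1-\delta\bigr\}, \qquad R\sim\bbP .
\]
By Lemma~\ref{lem:2}, Lemma~\ref{lem:3} or Lemma~\ref{lem:4}, depending on which CP variant is used, $\bbP^K(A)\ge 1-\beta$. On $A$, if $\bar{\alpha}_{\mathrm{CP}}=\infty$ the claim is trivial. Otherwise the definition \eqref{eq:quantile_definition} of the quantile as an infimum gives $q_{1-\delta}(\bbP)\le \bar{\alpha}_{\mathrm{CP}}$. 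No continuity of $F$ is needed for this implication.

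\textbf{Step 2 (transfer to the test distribution).} Apply \eqref{eq:proof_dro_quantile_lipschitz_infty} with $u=1-\delta$, $\mathbb{P}_1=\bbP_{\mathrm{test}}$ and $\mathbb{P}_2=\bbP$. Together with Assumption~\ref{assump: test_shift_wasserstein} this gives
\[
q_{1-\delta}(\bbP_{\mathrm{test}})\le q_{1-\delta}(\bbP)+W_\infty(\bbP,\bbP_{\mathrm{test}})\le q_{1-\delta}(\bbP)+\eta .
\]
Hence, on $A$, $q_{1-\delta}(\bbP_{\mathrm{test}})\le \bar{\alpha}_{\mathrm{CP}}+\eta=\bar{\alpha}_{\mathrm{CP}}^{\mathrm{test}}$.

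\textbf{Step 3 (back to coverage).} A CDF is right-continuous, so the infimum in \eqref{eq:quantile_definition} is attained. Thus $\bbP_{\mathrm{test}}\bigl(R^{(0)}\le q_{1-\delta}(\bbP_{\mathrm{test}})\bigr)\ge 1-\delta$. Monotonicity of the CDF then yields
\[
\bbP_{\mathrm{test}}\bigl(R^{(0)}\le \bar{\alpha}_{\mathrm{CP}}^{\mathrm{test}}\bigr)\ge 1-\delta \quad\text{on } A .
\]
Since $R^{(0)}$ is independent of $R^{1:K}$, the inner probability is a measurable function of $R^{1:K}$. So the event in \eqref{eq:cp_test_cond} contains $A$, and it therefore has $\bbP^K$-probability at least $1-\beta$.

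\textbf{Main obstacle.} The only delicate point is the Lipschitz bound \eqref{eq:proof_dro_quantile_lipschitz_infty} when the quantiles are defined as in \eqref{eq:quantile_definition}, i.e., without assuming continuity or strict monotonicity. I would verify it directly. If $\pi$ is a coupling with $|x_1-x_2|\le \eta'$ on its support, then
\[
\bbP_{\mathrm{test}}(R\le t+\eta')\ge \bbP(R\le t)\quad\text{for all } t .
\]
Taking $t=q_{1-\delta}(\bbP)$ shows $q_{1-\delta}(\bbP_{\mathrm{test}})\le q_{1-\delta}(\bbP)+\eta'$. Letting $\eta'\downarrow W_\infty(\bbP,\bbP_{\mathrm{test}})$ gives the bound. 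A near-optimal coupling suffices, so the argument does not require the infimum in \eqref{eq: wasserstein_infty_distance} to be attained.
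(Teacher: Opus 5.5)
Your proof is correct and uses the same ingredient as the paper's: a coupling with $|X-Y|\le\eta$ gives $\bbP_{\mathrm{test}}(Y\le t+\eta)\ge\bbP(X\le t)$ for every $t$. You invoke this at $t=q_{1-\delta}(\bbP)$ inside your Lipschitz check, whereas the paper applies it directly at $t=\bar\alpha_{\mathrm{CP}}$ on the good calibration event; that makes your passage through quantiles and right-continuity (Steps 1 and 3) unnecessary but harmless, and your remark that a near-optimal coupling suffices is a small extra bit of care, since the paper simply asserts an exact coupling with $|X-Y|\le\eta$ exists.
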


\begin{proof}
By Lemma~\ref{lem:2} (or its variants \eqref{eq: cali_cp_lemma2}, \eqref{eq: cali_cp_lemma3}), we have
\begin{equation}
\label{eq:cp_base_cond}
\bbP^{K}\!\left(
  \bbP \big(X \le \bar{\alpha}_{\mathrm{CP}}\big) \ge 1-\delta
\right) \ge 1-\beta,
\end{equation}
where $X\sim\bbP$. Under Assumption~\ref{assump: test_shift_wasserstein}, $W_{\infty}(\bbP,\bbP_{\mathrm{test}})\le \eta$, so there exists a coupling $\pi\in\Gamma(\bbP,\bbP_{\mathrm{test}})$ such that for $(X,Y)\sim\pi$ we have $|X-Y|\le \eta$ almost surely.
Consequently, for any $t\in\mathbb{R}$,
\begin{equation}
\label{eq:cdf_shift_winf}
\bbP_{\mathrm{test}}(Y \le t+\eta)
~\ge~
\bbP(X \le t).
\end{equation}
Applying \eqref{eq:cdf_shift_winf} with $t=\bar{\alpha}_{\mathrm{CP}}$ yields
\begin{equation}
\label{eq:cp_shift_transfer}
\bbP_{\mathrm{test}}\big(Y \le \bar{\alpha}_{\mathrm{CP}}+\eta\big)
~\ge~
\bbP\big(X \le \bar{\alpha}_{\mathrm{CP}}\big).
\end{equation}
Therefore, on the event
$\{\bbP(X \le \bar{\alpha}_{\mathrm{CP}})\ge 1-\delta\}$, we also have
$\bbP_{\mathrm{test}}(Y \le \bar{\alpha}_{\mathrm{CP}}^{\mathrm{test}})
\ge 1-\delta$ by \eqref{eq:cp_shift_transfer}. Combining this implication with
\eqref{eq:cp_base_cond} proves \eqref{eq:cp_test_cond}.
\end{proof}

Lemma~\ref{lem:cp_shift} has a simple interpretation. The conformal threshold $\bar{\alpha}_{\mathrm{CP}}$ controls finite-sample uncertainty under the calibration distribution $\bbP$, while Assumption~\ref{assump: test_shift_wasserstein} accounts for the additional discrepancy between
$\bbP$ and $\bbP_{\mathrm{test}}$. Under the $W_\infty$ shift model, this second source of uncertainty enters as an additive correction in value space. Thus, test-time robustness is obtained by increasing the calibration threshold by $\eta$, at the cost of additional conservativeness.

%
% \marginJC{It's ugly to have a dry exposition: result, proof, result, proof. Better to provide insights before moving on.}
%

\textbf{Distributionally robust optimization.} Next, we augment the DRO estimator by the same additive $\eta$, combining the empirical-calibration gap and the calibration-test shift.

\begin{lemma}[\textbf{Shift robust DRO estimator}]\label{lem:dro_shift}
Fix $\delta,\beta\in(0,1)$. Given $R^{(1)},\ldots,R^{(K)} \sim \bbP$ i.i.d.\ and $R^{(0)}\sim \bbP_{\mathrm{test}}$ satisfying Assumption~\ref{assump: test_shift_wasserstein}, 
% Let $r_K(\beta)$ be chosen so that $\bbP^{K}\!\big(W_\infty(\bbP,\widehat{\bbP}_K)\le r_K(\beta)\big)\ge 1-\beta$. 
define the DRO estimator $\bar{\alpha}_{\mathrm{DRO}}$ as in \eqref{eq: alpha_dro_bound}. The test time estimator
\begin{equation}
\label{eq:dro_test_estimator}
\bar{\alpha}_{\mathrm{DRO}}^{\mathrm{test}}
:=
\bar{\alpha}_{\mathrm{DRO}} + \eta ,
\end{equation}
where $r_K(\beta)$ is defined as in Lemma~\ref{lem:choice-r},
satisfies the two-level guarantee
\begin{equation}
\label{eq:dro_test_cond}
\bbP^{K}\!\left(
  \bbP_{\mathrm{test}}\!\big(R^{(0)} \le \bar{\alpha}_{\mathrm{DRO}}^{\mathrm{test}}\big) \ge 1-\delta
\right) \ge 1-\beta .
\end{equation}
\resultend
\end{lemma}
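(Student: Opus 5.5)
The argument runs parallel to the proof of Lemma~\ref{lem:cp_shift}, with Lemma~\ref{lem:dro_quantile_refined} taking the place of the calibration-conditional CP guarantee. The starting point is the base two-level guarantee for the unshifted problem. By Lemma~\ref{lem:choice-r}, the radius $r_K(\beta)$ satisfies \eqref{eq: wasserstein_radius_guarantee}. Lemma~\ref{lem:dro_quantile_refined} then gives
\[
\bbP^{K}\!\left(\bbP\big(X\le\bar\alpha_{\mathrm{DRO}}\big)\ge 1-\delta\right)\ge 1-\beta,\qquad X\sim\bbP.
\]
Equivalently, on the event $E:=\{W_\infty(\widehat{\bbP}_K,\bbP)\le r_K(\beta)\}$ we have $q_{1-\delta}(\bbP)\le\bar\alpha_{\mathrm{DRO}}$.

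The second step transfers coverage from $\bbP$ to $\bbP_{\mathrm{test}}$ with the coupling argument of Lemma~\ref{lem:cp_shift}. Assumption~\ref{assump: test_shift_wasserstein} gives $W_\infty(\bbP,\bbP_{\mathrm{test}})\le\eta$. In one dimension the infimum in \eqref{eq: wasserstein_infty_distance} is attained by the quantile (comonotone) coupling, so there is a $\pi\in\Gamma(\bbP,\bbP_{\mathrm{test}})$ with $|X-Y|\le\eta$ almost surely. If one prefers to avoid attainment, one can instead use the representation \eqref{eq:proof_dro_Winf_quantiles} directly. Either route yields $\bbP_{\mathrm{test}}(Y\le t+\eta)\ge\bbP(X\le t)$ for every deterministic $t$.

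Next, apply this inequality conditionally on the calibration sample, taking $t=\bar\alpha_{\mathrm{DRO}}(R^{1:K})$. The test score is independent of $R^{1:K}$, so the inner probability is a function of the realized threshold alone. On the event from the first step, this gives $\bbP_{\mathrm{test}}(R^{(0)}\le\bar\alpha^{\mathrm{test}}_{\mathrm{DRO}})\ge 1-\delta$. Since that event has $\bbP^K$-probability at least $1-\beta$, \eqref{eq:dro_test_cond} follows.

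An alternative, more DRO-flavored route uses the triangle inequality for $W_\infty$. On $E$ we have $W_\infty(\widehat{\bbP}_K,\bbP_{\mathrm{test}})\le r_K(\beta)+\eta$. Then $\bbP_{\mathrm{test}}\in\mathcal{M}^{r_K(\beta)+\eta}(\widehat{\bbP}_K)$, and the fixed-radius coverage \eqref{eq:dro_full_ball_coverage}, applied with radius $r_K(\beta)+\eta$, gives the claim directly. This version also shows that coverage holds uniformly over the enlarged ball. I would present the coupling proof and mention this one as a remark.

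The main obstacle is modest. It lies in justifying the pointwise CDF transfer, namely that the $W_\infty$ bound gives an almost-sure coupling bound, or the quantile inequality $q_{1-\delta}(\bbP_{\mathrm{test}})\le q_{1-\delta}(\bbP)+\eta$. One must also handle a possibly non-continuous $\bbP_{\mathrm{test}}$ carefully, using the definition \eqref{eq:quantile_definition}: $\alpha\ge q_p(\bbQ)$ implies $\bbQ(R\le\alpha)\ge p$ by right-continuity of the CDF.
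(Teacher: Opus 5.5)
Your proposal is correct and follows the paper's argument. You invoke Lemma~\ref{lem:dro_quantile_refined} for the base two-level guarantee under $\bbP$, then transfer it to $\bbP_{\mathrm{test}}$ through the $W_\infty$ coupling bound $\bbP_{\mathrm{test}}(Y\le t+\eta)\ge\bbP(X\le t)$, exactly as in Lemma~\ref{lem:cp_shift}; your alternative triangle-inequality route, which places $\bbP_{\mathrm{test}}$ in $\mathcal{M}^{r_K(\beta)+\eta}(\widehat{\bbP}_K)$, is also valid and is the same coupling-composition argument the paper uses for Theorem~\ref{thm:lp_dro_shift}.
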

%
% \marginJC{We're not consistent with the use of the symbol \resultend at the end of results. Some times we have it, some times we don't}
% %

The proof follows by combining Lemma~\ref{lem:dro_quantile_refined}
with the same $W_\infty$ shift argument used in
Lemma~\ref{lem:cp_shift}. Lemma~\ref{lem:dro_quantile_refined} gives
the calibration-distribution guarantee for
$\bar{\alpha}_{\mathrm{DRO}}$, while Assumption~\ref{assump:
test_shift_wasserstein} implies that adding $\eta$ transfers this guarantee
from $\bbP$ to $\bbP_{\mathrm{test}}$.
Thus, the DRO correction decomposes into two additive terms: $r_K(\beta)$ accounts for finite-sample uncertainty between the empirical and calibration distributions, while $\eta$ accounts for calibration-test distribution shift.

%
% \marginJC{Here we start with the LP stuff, which seems to combine elements from Wasserstein and CP. Should we present this as its own subsection?}
%

\subsection{L\'evy--Prokhorov Shift Model}

A complementary line of work~\cite{aolaritei2025conformal}
models test-time shift using the L\'evy--Prokhorov (LP) metric, which
allows both value-space perturbations and mass displacement. This leads to uncertainty thresholds that combine an additive value correction with a quantile-level correction. 

As a special case, we first introduce the total variation (TV) distance between two distributions $\mathbb{P}, \mathbb{Q}$:
\begin{equation}
\label{eq:tv_distance}
  \mathrm{TV}(\mathbb{P},\mathbb{Q}) 
  := \inf_{\pi \in \Gamma(\mathbb{P},\mathbb{Q})} 
    \int_{\mathcal{X}\times\mathcal{X}} \mathds{1}\{x_1 \neq x_2\} \, d\pi(x_1,x_2).
\end{equation}
Here $\mathds{1}$ denotes the indicator function. Intuitively, $\mathrm{TV}(\mathbb{P},\mathbb{Q})$ is the minimum
%
% \marginJC{maximal?}
%
fraction of mass that must be reassigned to transform $\mathbb{P}$ into $\mathbb{Q}$. 

The L{\'e}vy--Prokhorov (LP) distance generalizes the Wasserstein and TV distances by permitting both local and global perturbations. For $\epsilon \geq 0$, let
\begin{equation}\label{eq:lp_distance}
  \mathrm{LP}_\epsilon(\mathbb{P},\mathbb{Q}) := 
  \inf_{\pi \in \Gamma(\mathbb{P},\mathbb{Q})} 
  \int_{\mathcal{X}\times\mathcal{X}} 
    \mathds{1}\{\|x_1 - x_2\| > \epsilon\} \, d\pi(x_1,x_2),
\end{equation}
The corresponding LP ambiguity set is
\begin{equation}\label{eq: LP_ball}
  B_{\eta,\rho}(\mathbb{P}) := 
  \left\{ \mathbb{Q} \in \mathcal{P}(\mathcal{X}) \,\middle|\, \mathrm{LP}_{\eta}(\mathbb{P},\mathbb{Q}) \leq \rho \right\}.
\end{equation}
This admits a two-step interpretation: (i) each unit of mass under $\mathbb{P}$ can be moved within a radius $\eta$, and (ii) up to a fraction $\rho$ of the total mass may be displaced arbitrarily. Notably, $B_{0,\rho}(\mathbb{P})$ reduces to the TV ball, while $B_{\eta,0}(\mathbb{P})$ reduces to the $W_\infty$ ball.

The second shift model bounds the LP distance between the calibration and test distributions.

\begin{Assumption}
\label{assump:test_shift_LP}
The test distribution is within an LP ball around the calibration distribution: 
\begin{equation}\label{eq:lp_shift_ball}
\mathrm{LP}_{\eta}(\bbP,\bbP_{\mathrm{test}}) \le \rho ,
% \bbP_{\mathrm{test}} \in \mathcal{B}_{r,\rho}(\bbP) \;\coloneq\; \bigl\{\bbQ:\; \mathrm{LP}_r(\bbP,\bbQ)\le \rho \bigr\}.
\end{equation}
corresponding to Problem~\ref{prob: UQ} with $\dist=\mathrm{LP}_\eta$ and $\gamma=\rho$.
\end{Assumption}

% The cases $\rho=0$ and $\eta=0$ reduce to pure $W_\infty$–type and TV–type effects, respectively \NA{We already said this above Assumption 5.4.}.

Under Assumption~\ref{assump:test_shift_LP}, we restate the LP-robust conformal result of \cite[Cor.~4.2]{aolaritei2025conformal}.

\begin{theorem}
\label{thm:lp_robust_cp}
Fix $\delta \in (0,1)$. 
Given $R^{(1)},\ldots,R^{(K)} \sim \bbP$ i.i.d.\ and $R^{(0)} \sim \bbP_{\mathrm{test}}$ 
satisfying Assumption~\ref{assump:test_shift_LP}, 
define the LP robust conformal quantile as:
\begin{equation}
\label{eq:LP_estimator_explicit}
\bar\alpha_{\eta,\rho}(\widehat{\bbP}_K)
:= \Quant_{\,1-\delta+\rho+\tfrac{2+\rho-\delta}{K}}(R^{1:K},\infty) + \eta .
\end{equation}
Then, the following marginal guarantee holds:
%
% \marginJC{Why are we suddenly using curly brackets in these probability expressions?}
%
\begin{equation}
\label{eq:LP_single_prob}
(\bbP^{K}\times\bbP_{\mathrm{test}})\!\left(
R^{(0)} \le \bar\alpha_{\eta,\rho}(\widehat{\bbP}_K)
\right) \;\ge\; 1-\delta .
\end{equation}
\resultend
\end{theorem}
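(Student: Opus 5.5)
The plan is to split the argument into a transport step, which converts coverage under $\bbP_{\mathrm{test}}$ into coverage under $\bbP$ at the cost of $\rho$, and a standard exchangeability step for split conformal prediction under $\bbP$. Write $p:=1-\delta+\rho+\tfrac{2+\rho-\delta}{K}$ and $\hat Q:=\Quant_{p}(R^{1:K},\infty)$, so that $\bar\alpha_{\eta,\rho}(\widehat{\bbP}_K)=\hat Q+\eta$. If $p\ge 1$, then $\lceil (K+1)p\rceil\ge K+1$ selects $\infty$, the threshold is $\infty$, and \eqref{eq:LP_single_prob} holds trivially. We therefore assume $p<1$.

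\emph{Transport step.} By Assumption~\ref{assump:test_shift_LP} and the definition \eqref{eq:lp_distance}, there is a coupling $\pi\in\Gamma(\bbP,\bbP_{\mathrm{test}})$ with $\pi(|X-Y|>\eta)\le\rho$. If the infimum in \eqref{eq:lp_distance} is not attained, we take a near-optimal $\pi$ with $\rho+\epsilon'$ and let $\epsilon'\to0$ at the end. Mirroring the argument of Lemma~\ref{lem:cp_shift}, for every deterministic $t\in\mathbb{R}\cup\{\infty\}$ we have
\[
\bbP_{\mathrm{test}}(Y\le t+\eta)\;\ge\;\pi(X\le t,\ |X-Y|\le\eta)\;\ge\;\bbP(X\le t)-\rho .
\]
We draw $(X^{(0)},R^{(0)})\sim\pi$ independently of $R^{1:K}$. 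Conditioning on $R^{1:K}$ and applying this bound with $t=\hat Q$, then taking the outer expectation over $R^{1:K}$ (tower property, as in Lemma~\ref{lem:marginal_coverage}), gives
\[
(\bbP^K\times\bbP_{\mathrm{test}})\bigl(R^{(0)}\le \hat Q+\eta\bigr)\;\ge\;\bbP^{K+1}\bigl(X^{(0)}\le \hat Q\bigr)-\rho .
\]

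\emph{Exchangeability step.} Now $X^{(0)},R^{(1)},\dots,R^{(K)}$ are i.i.d.\ from $\bbP$, hence exchangeable. Let $j:=\lceil (K+1)p\rceil\le K$. The standard split-conformal rank argument applies to $X^{(0)}$ and the $j$-th smallest calibration score $\hat Q$: the event $X^{(0)}\le\hat Q$ contains the event that the rank of $X^{(0)}$ among the $K+1$ values is at most $j$. With ties broken uniformly at random, this rank is uniform on $\{1,\dots,K+1\}$, so
\[
\bbP^{K+1}(X^{(0)}\le\hat Q)\ \ge\ \frac{j}{K+1}\ \ge\ p .
\]
Combining the two steps yields coverage at least $p-\rho=1-\delta+\tfrac{2+\rho-\delta}{K}\ge 1-\delta$, which proves \eqref{eq:LP_single_prob}. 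The additional $O(1/K)$ term in the level is therefore pure slack for this bound; it presumably comes from the way \cite{aolaritei2025conformal} convert the worst-case quantile over the LP ball around $\widehat{\bbP}_K$ into an order statistic.

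\emph{Main obstacle.} The delicate point is the bookkeeping in the exchangeability step. We must check that $\Quant_p(R^{1:K},\infty)$, taken as the $\lceil (K+1)p\rceil$-th smallest of $K+1$ values, gives coverage at least $\lceil (K+1)p\rceil/(K+1)$ and not only $\lceil (K+1)p\rceil/(K+1)-1/(K+1)$, and that ties and atoms of $\bbP$ only help, since the inequality $X^{(0)}\le\hat Q$ is non-strict. We must also handle measurability of the coupling, including the possibly non-attained infimum. If the index convention turns out to lose $1/(K+1)$, the slack term $(2+\rho-\delta)/K$ is comfortably large enough to absorb it.
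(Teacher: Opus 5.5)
Your proof is correct. The paper does not prove this statement itself. It restates it from \cite[Cor.~4.2]{aolaritei2025conformal}, whose approach, per the paper's related-work description, goes through a characterization of worst-case quantiles over the LP ball.

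Your argument is instead the marginal counterpart of the paper's own proof of Theorem~\ref{thm:pac_levy}. The LP shift relation $F_{\bbP}(t-\eta)-\rho\le F_{\bbP_{\mathrm{test}}}(t)$ transfers coverage from $\bbP$ to $\bbP_{\mathrm{test}}$ at a cost of $\rho$; the paper invokes this relation without proof, whereas you derive it from a near-optimal coupling. Coverage under $\bbP$ is then supplied by the exchangeability rank bound $j/(K+1)\ge p$ rather than by Lemma~\ref{lem:2}.

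Your route buys simplicity and a sharper conclusion. It gives marginal coverage at least $p-\rho$, so the level $1-\delta+\rho$ alone already suffices and the $(2+\rho-\delta)/K$ term really is slack. The worst-case-quantile route instead gives an exact description of the worst case over the LP ball, which this one-sided validity statement does not need.

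Your concern about losing $1/(K+1)$ is unfounded, and you do not even need tie-breaking. The event $X^{(0)}>\hat Q$ forces at least $j$ calibration scores strictly below $X^{(0)}$. Among $K+1$ exchangeable values, at most $K+1-j$ indices can have at least $j$ others strictly below them. Hence this event has probability at most $(K+1-j)/(K+1)$.

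There is one bookkeeping slip: $p<1$ does not imply $j=\lceil (K+1)p\rceil\le K$. For $K/(K+1)<p<1$ you get $j=K+1$ and $\hat Q=\infty$. The correct trivial case is therefore $p>K/(K+1)$. Nothing breaks, since your rank argument goes through verbatim with $j=K+1$.
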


No non-vacuity condition is needed here: if the quantile level exceeds
$K/(K+1)$ the estimator returns $\infty$ and the guarantee holds trivially.
Remark~\ref{rem:cp_finite_sample} gives the condition under which the threshold
is finite, and hence useful.

The results of \cite{aolaritei2025conformal} provide single-probability 
(marginal) coverage guarantees under LP distribution shift. 
We now strengthen this to the calibration-conditional (two-level) guarantee of \eqref{eq:cal_cond_goal}, which holds with high probability over the calibration sample, by combining the usual CP index inflation in Lemma~\ref{lem:2} with the LP shift relation.

\begin{table*}[htb]
\centering
\small
\caption{Summary of uncertainty quantification under distribution shift.}
\label{tab:comparison_test_shift}
\setlength{\tabcolsep}{4pt}
\renewcommand{\arraystretch}{1.2}
\resizebox{\linewidth}{!}{
\begin{tabular}{@{}p{3.8cm}p{2.5cm}p{6cm}p{5.5cm}p{2.2cm}@{}}
\toprule
\textbf{Method} &
\textbf{Shift model} &
\textbf{Coverage guarantee} &
\textbf{Estimator $\displaystyle \bar\alpha(R^{1:K})$} &
\textbf{Result} \\
\midrule
% -------------------------------------------------------------------
\textbf{CP under $W_\infty$ shift} &
$\bbP_{\mathrm{test}}\in\calM^{\eta}(\bbP)$ &
$\displaystyle
\bbP^K\Bigl[
\bbP_{\mathrm{test}}\{R^{(0)}\le\bar\alpha\}\ge 1-\delta
\Bigr]\ge 1-\beta$ &
$\displaystyle
\Quant_{\,1-\delta+\sqrt{\frac{\ln(1/\beta)}{2K}}}
\bigl(R^{1:K},\infty\bigr)+\eta$ &
Lemma~\ref{lem:cp_shift} \\[8pt]
% -------------------------------------------------------------------
\textbf{DRO under $W_\infty$ shift} &
$\bbP_{\mathrm{test}}\in\calM^{\eta}(\bbP)$ &
$\displaystyle
\bbP^K\Bigl[
\bbP_{\mathrm{test}}\{R^{(0)}\le\bar\alpha\}\ge 1-\delta
\Bigr]\ge 1-\beta$ &
$\displaystyle
\Quant_{\,1-\delta}(R^{1:K})
+r_K(\beta)+\eta$ &
Lemma~\ref{lem:dro_shift} \\[8pt]
% -------------------------------------------------------------------
\textbf{LP robust CP (marginal) \cite{aolaritei2025conformal}} &
$\bbP_{\mathrm{test}}\in\mathcal{B}_{\eta,\rho}(\bbP)$ &
$\displaystyle
(\bbP^K\times\bbP_{\mathrm{test}})
\{R^{(0)}\le\bar\alpha\}\ge 1-\delta$
\; &
$\displaystyle
\Quant_{\,1-\delta+\rho+\frac{2+\rho-\delta}{K}}
\bigl(R^{1:K},\infty\bigr)+\eta$ &
Theorem~\ref{thm:lp_robust_cp} \\[8pt]
% -------------------------------------------------------------------
\textbf{CP under LP shift} &
$\bbP_{\mathrm{test}}\in\mathcal{B}_{\eta,\rho}(\bbP)$ &
$\displaystyle
\bbP^K\Bigl[
\bbP_{\mathrm{test}}\{R^{(0)}\le\bar\alpha\}\ge 1-\delta
\Bigr]\ge 1-\beta$ &
$\displaystyle
\Quant_{\,1-\delta+\sqrt{\frac{\ln(1/\beta)}{2K}}+\rho}
\bigl(R^{1:K},\infty\bigr)+\eta$ &
Theorem~\ref{thm:pac_levy} \\[8pt]
% -------------------------------------------------------------------
\textbf{DRO under LP shift} &
$\bbP_{\mathrm{test}}\in\mathcal{B}_{\eta,\rho}(\bbP)$ &
$\displaystyle
\bbP^K\Bigl[
\bbP_{\mathrm{test}}\{R^{(0)}\le\bar\alpha\}\ge 1-\delta
\Bigr]\ge 1-\beta$ &
$\displaystyle
\Quant_{\,1-\delta+\rho}(R^{1:K})
+r_K(\beta)+\eta$ &
Theorem~\ref{thm:lp_dro_shift} \\
\bottomrule
\end{tabular}}
\end{table*}

\begin{theorem}
\label{thm:pac_levy}
Fix $\delta,\beta\in(0,1)$ and suppose that $0\le\rho<\delta$.
Given $R^{(1)},\ldots,R^{(K)}\sim\bbP$ i.i.d.\ and
$R^{(0)}\sim\bbP_{\mathrm{test}}$ satisfying
Assumption~\ref{assump:test_shift_LP}, define
\begin{equation}
\label{eq:pac_lp_estimator}
\bar\alpha_{\mathrm{LP\text{-}CP}}^{\mathrm{test}}(\widehat{\bbP}_K)
:= \Quant_{\,1-\delta+\sqrt{\tfrac{\ln(1/\beta)}{2K}}+\rho}(R^{1:K},\infty)+\eta.
\end{equation}
Assume that the quantile level satisfies
$\frac{1}{K+1}\le 1-\delta+\sqrt{\frac{\ln(1/\beta)}{2K}}+\rho\le\frac{K}{K+1}$.
Then the following calibration-conditional guarantee holds:
\begin{equation}
\label{eq:pac_lp_bound}
\bbP^K\Bigl(
\bbP_{\mathrm{test}}\big(R^{(0)}\le
\bar\alpha_{\mathrm{LP\text{-}CP}}^{\mathrm{test}}(\widehat{\bbP}_K)\big)
\ge 1-\delta
\Bigr)\ge 1-\beta.
\end{equation}
\resultend
\end{theorem}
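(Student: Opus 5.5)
The plan is to separate the argument into a deterministic transfer step from $\bbP$ to $\bbP_{\mathrm{test}}$ under the LP model and a probabilistic calibration step that reuses Lemma~\ref{lem:2} at the inflated level $1-(\delta-\rho)$.

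\textbf{Transfer step.} Under Assumption~\ref{assump:test_shift_LP}, there exists a coupling $\pi\in\Gamma(\bbP,\bbP_{\mathrm{test}})$ with $\pi(|X-Y|>\eta)\le\rho$. If the infimum in \eqref{eq:lp_distance} is not attained, we take a near-optimal coupling with $\rho+\epsilon'$ and let $\epsilon'\to0$ at the end. Then for every $t\in\mathbb{R}$,
\[
\bbP_{\mathrm{test}}(Y\le t+\eta)\;\ge\;\pi\bigl(X\le t,\;|X-Y|\le\eta\bigr)\;\ge\;\bbP(X\le t)-\rho .
\]
This is the LP analogue of \eqref{eq:cdf_shift_winf}. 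Applying it with $t=\Quant_{1-\delta+\epsilon_K+\rho}(R^{1:K},\infty)$, where $\epsilon_K=\sqrt{\ln(1/\beta)/(2K)}$, gives
\[
\bbP_{\mathrm{test}}\bigl(R^{(0)}\le\bar\alpha^{\mathrm{test}}_{\mathrm{LP\text{-}CP}}\bigr)\ge\bbP\bigl(X\le t\bigr)-\rho .
\]
So it suffices to show that, with $\bbP^K$-probability at least $1-\beta$,
\[
\bbP(X\le t)\ge 1-(\delta-\rho).
\]

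\textbf{Calibration step.} The required event is exactly the calibration-conditional statement of Lemma~\ref{lem:2} with risk level $\delta$ replaced by $\delta'':=\delta-\rho$. The hypothesis $\rho<\delta$ guarantees $\delta''\in(0,1)$. The level in Lemma~\ref{lem:2} then becomes
\[
1-\delta''+\epsilon_K=1-\delta+\rho+\epsilon_K,
\]
which is precisely the level appearing in \eqref{eq:pac_lp_estimator}. Lemma~\ref{lem:2} therefore yields
\[
\bbP^K\bigl(\bbP(X\le t)\ge 1-\delta+\rho\bigr)\ge1-\beta .
\]
On this event the transfer inequality gives test coverage of at least $1-\delta$, which proves \eqref{eq:pac_lp_bound}.

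\textbf{Main obstacle.} The delicate point is making sure Lemma~\ref{lem:2} applies verbatim, since it was stated for a generic $\delta\in(0,1)$ and $\rho$ enters only through the level. The assumed level bounds $\tfrac{1}{K+1}\le\cdot\le\tfrac{K}{K+1}$ keep the selected order statistic finite and well defined, so the binomial-tail/Hoeffding argument underlying Lemma~\ref{lem:2} goes through. If the coupling infimum is not attained, the limiting argument on $\epsilon'$ only needs right-continuity of the CDF of $\bbP_{\mathrm{test}}$.

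A second subtlety is that the exceptional mass $\rho$ must be removed at the population level rather than the sample level, so no extra randomness is introduced. This is why the result is stated conditionally on the calibration sample and the transfer inequality is applied pointwise in $t$.
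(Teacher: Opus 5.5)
Your proposal is correct and follows the paper's proof essentially step for step. You apply Lemma~\ref{lem:2} at miscoverage level $\delta-\rho$ to get $F_{\bbP}(\Quant_{p_K}(R^{1:K},\infty))\ge 1-\delta+\rho$ with probability at least $1-\beta$, then transfer to $\bbP_{\mathrm{test}}$ via $F_{\bbP}(t-\eta)-\rho\le F_{\bbP_{\mathrm{test}}}(t)$. The one difference is that you derive this LP relation from a (near-)optimal coupling, which the paper only asserts; also, your $\epsilon'\to0$ limit does not need right-continuity, since $\epsilon'$ appears only on the right-hand side.
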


\begin{proof}
Let
\begin{equation}
p_K := 1-\delta+\sqrt{\frac{\ln(1/\beta)}{2K}}+\rho.
\end{equation}
By Lemma~\ref{lem:2}, applied with miscoverage level $\delta-\rho$,
\begin{equation}
\bbP^K\!\left(
F_{\bbP}\!\left(\Quant_{p_K}(R^{1:K},\infty)\right) \ge 1-\delta+\rho\right) \ge 1-\beta.
\end{equation}
Equivalently,
\begin{equation}
\bbP^K\!\left(
F_{\bbP}\!\left(\Quant_{p_K}(R^{1:K},\infty)\right)-\rho
\ge 1-\delta \right) \ge 1-\beta.
\end{equation}

Under Assumption~\ref{assump:test_shift_LP}, the LP shift relation implies
that, for every $t\in\mathbb{R}$,
\begin{equation}
F_{\bbP}(t-\eta)-\rho
\le
F_{\bbP_{\mathrm{test}}}(t).
\end{equation}
Applying this inequality with
$t=\Quant_{p_K}(R^{1:K},\infty)+\eta$ gives
\begin{equation}
\begin{split}
F_{\bbP}&\!\left(\Quant_{p_K}(R^{1:K},\infty)\right)-\rho \\
&\le
F_{\bbP_{\mathrm{test}}}\!\left(
\Quant_{p_K}(R^{1:K},\infty)+\eta
\right).
\end{split}
\end{equation}
Combining the preceding inequalities proves
\eqref{eq:pac_lp_bound}.
\end{proof}

The preceding result handles the empirical--calibration discrepancy through
CP level inflation. A parallel DRO construction instead accounts for this
discrepancy through the Wasserstein radius $r_K(\beta)$. Under LP
calibration-test shift, the resulting estimator combines an additive
value correction $r_K(\beta)+\eta$ with the quantile-level correction
$\rho$.

\begin{theorem}[\textbf{DRO under LP distribution shift}]
\label{thm:lp_dro_shift}
Fix $\delta,\beta\in(0,1)$ and suppose that $0\le\rho<\delta$.
Further assume that $\mathbb{P}$ is supported on $[a,b] \subset \mathbb{R}$, has
density $f$ satisfying $\inf_{x\in[a,b]}f(x) \ge m>0$, and define $r_K(\beta)$ as in Lemma~\ref{lem:choice-r}.
Given $R^{(1)},\ldots,R^{(K)}\sim\bbP$ i.i.d.\ and
$R^{(0)}\sim\bbP_{\mathrm{test}}$ satisfying
Assumption~\ref{assump:test_shift_LP}, define:
\begin{equation}
\label{eq:pac_dro_estimator}
\bar\alpha_{\mathrm{LP\text{-}DRO}}^{\mathrm{test}}
:=
\Quant_{1-\delta+\rho}(R^{1:K})
+r_K(\beta)+\eta,
\end{equation}
Then the following calibration-conditional guarantee holds:
\begin{equation}
\label{eq:pac_dro_bound}
\bbP^K\Bigl(
\bbP_{\mathrm{test}}\big(R^{(0)}\le
\bar\alpha_{\mathrm{LP\text{-}DRO}}^{\mathrm{test}}\big)
\ge 1-\delta
\Bigr)\ge 1-\beta.
\end{equation}
Moreover, for every realization of the calibration sample,
\begin{equation}
\label{eq:lp_dro_full_ball}
\inf_{\bbQ\in
\mathcal{B}_{r_K(\beta)+\eta,\rho}(\widehat{\bbP}_K)}
\bbQ\left(
R^{(0)}\le
\bar\alpha_{\mathrm{LP\text{-}DRO}}^{\mathrm{test}}
\right)
\ge 1-\delta.
\end{equation}
\resultend
\end{theorem}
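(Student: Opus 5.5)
The plan is to combine the quantile-Lipschitz argument from the proof of Lemma~\ref{lem:dro_quantile_refined} with the LP shift relation used in the proof of Theorem~\ref{thm:pac_levy}. Write $u:=1-\delta+\rho\in(1-\delta,1)$; since $\rho<\delta$, this is a legitimate level. For brevity, set $\hat q:=\Quant_{u}(R^{1:K})=q_u(\widehat{\bbP}_K)$.

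\emph{Step 1: high-probability event.} By Lemma~\ref{lem:choice-r}, the event $E:=\{W_\infty(\widehat{\bbP}_K,\bbP)\le r_K(\beta)\}$ has $\bbP^K(E)\ge 1-\beta$. On $E$, the representation \eqref{eq:proof_dro_Winf_quantiles}, evaluated at level $u$ rather than $1-\delta$, gives
\[
q_u(\bbP)\le \hat q+r_K(\beta).
\]
By the definition of the quantile (right-continuity of $F_{\bbP}$), this yields $F_{\bbP}(\hat q+r_K(\beta))\ge u=1-\delta+\rho$.

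\emph{Step 2: transfer to the test distribution.} Assumption~\ref{assump:test_shift_LP} supplies the LP shift relation $F_{\bbP_{\mathrm{test}}}(t)\ge F_{\bbP}(t-\eta)-\rho$ for all $t$, already invoked in Theorem~\ref{thm:pac_levy}. I would justify it briefly: take a near-optimal coupling $\pi$ and observe that $\{X\le t-\eta\}\subseteq\{Y\le t\}\cup\{|X-Y|>\eta\}$. Applying the relation with $t=\bar\alpha^{\mathrm{test}}_{\mathrm{LP\text{-}DRO}}=\hat q+r_K(\beta)+\eta$ gives, on $E$,
\[
F_{\bbP_{\mathrm{test}}}(t)\ge 1-\delta+\rho-\rho=1-\delta,
\]
which proves \eqref{eq:pac_dro_bound}.

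\emph{Step 3: uniform coverage over the ball, \eqref{eq:lp_dro_full_ball}.} This holds deterministically. Take any $\bbQ$ with $\mathrm{LP}_{r_K(\beta)+\eta}(\widehat{\bbP}_K,\bbQ)\le\rho$. The same coupling argument, now with $\widehat{\bbP}_K$ as the source distribution and radius $r_K(\beta)+\eta$, gives
\[
\bbQ(R\le t)\ge \widehat{\bbP}_K\bigl(R\le t-r_K(\beta)-\eta\bigr)-\rho=\widehat{\bbP}_K(R\le \hat q)-\rho.
\]
Since $\widehat{\bbP}_K(R\le \hat q)\ge \lceil Ku\rceil/K\ge u$, the right-hand side is at least $1-\delta$.

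\emph{Expected obstacle.} The main technical point is rigor in the LP coupling inequality: the infimum in \eqref{eq:lp_distance} may not be attained. I would handle this with an $\varepsilon$-optimal coupling and let $\varepsilon\to0$, or use the closedness of $\{|x_1-x_2|>\eta\}$'s complement, which gives attainment by tightness. Everything else is a direct reuse of the earlier arguments.
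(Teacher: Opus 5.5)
Your proof is correct. For the calibration-conditional bound \eqref{eq:pac_dro_bound}, however, it takes a different route from the paper.

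The paper first proves the deterministic ball statement \eqref{eq:lp_dro_full_ball}, exactly as in your Step 3. It then obtains \eqref{eq:pac_dro_bound} as a corollary. On the event $W_\infty(\bbP,\widehat{\bbP}_K)\le r_K(\beta)$, it glues the $W_\infty$ coupling of $(\widehat{\bbP}_K,\bbP)$ with the LP coupling of $(\bbP,\bbP_{\mathrm{test}})$. The triangle inequality then gives $\mathrm{LP}_{r_K(\beta)+\eta}(\widehat{\bbP}_K,\bbP_{\mathrm{test}})\le\rho$, i.e., $\bbP_{\mathrm{test}}\in\mathcal{B}_{r_K(\beta)+\eta,\rho}(\widehat{\bbP}_K)$, so the uniform guarantee applies to $\bbP_{\mathrm{test}}$.

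You never place $\bbP_{\mathrm{test}}$ in the data-centred ball. Instead you chain two scalar inequalities at a single level and threshold:
the quantile-Lipschitz bound $q_u(\bbP)\le q_u(\widehat{\bbP}_K)+r_K(\beta)$, from the one-dimensional representation \eqref{eq:proof_dro_Winf_quantiles}, and the CDF form of the LP relation between $\bbP$ and $\bbP_{\mathrm{test}}$. This is the template of the proof of Theorem~\ref{thm:pac_levy}, with the CP level inflation replaced by the Wasserstein radius.

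\emph{What your route buys.} It is more elementary: it needs no composition (gluing) of couplings, and the two claims are proved independently.

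\emph{What the paper's route buys.} It yields a stronger structural fact: the enlarged LP ball around $\widehat{\bbP}_K$ contains the test distribution with probability at least $1-\beta$. This makes \eqref{eq:pac_dro_bound} an immediate consequence of \eqref{eq:lp_dro_full_ball}. It also mirrors the ``true distribution lies in the ambiguity set'' logic of Lemma~\ref{lem:dro_quantile_refined}. Finally, it would transfer to any quantity controlled uniformly over the ball, without relying on the one-dimensional quantile representation of $W_\infty$.

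A trivial slip: since $\rho=0$ is allowed, $u\in[1-\delta,1)$ rather than $(1-\delta,1)$. Nothing in your argument depends on this.
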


\begin{proof}
For any
$\bbQ\in\mathcal{B}_{r_K(\beta)+\eta,\rho}(\widehat{\bbP}_K)$,
the LP shift relation implies that, for every $t\in\mathbb{R}$,
\begin{equation}
F_{\widehat{\bbP}_K}\bigl(t-r_K(\beta)-\eta\bigr)-\rho
\le
F_{\bbQ}(t).
\end{equation}
Applying this inequality with
$t=\Quant_{1-\delta+\rho}(R^{1:K})+r_K(\beta)+\eta$
gives
\begin{equation}
F_{\bbQ}\left(
\bar\alpha_{\mathrm{LP\text{-}DRO}}^{\mathrm{test}}
\right)
\ge
F_{\widehat{\bbP}_K}\left(
\Quant_{1-\delta+\rho}(R^{1:K})
\right)-\rho
\ge 1-\delta.
\end{equation}
Since this holds for every distribution in the ambiguity set,
\eqref{eq:lp_dro_full_ball} follows. Then, by the choice of $r_K(\beta)$,
\begin{equation}
\bbP^K\left(
W_\infty(\bbP,\widehat{\bbP}_K)\le r_K(\beta)
\right)\ge 1-\beta.
\end{equation}
On this event, combining the $W_\infty$ coupling between
$\widehat{\bbP}_K$ and $\bbP$ with the LP coupling between
$\bbP$ and $\bbP_{\mathrm{test}}$ gives
\begin{equation}
\bbP_{\mathrm{test}}
\in
\mathcal{B}_{r_K(\beta)+\eta,\rho}(\widehat{\bbP}_K).
\end{equation}
Therefore, the deterministic guarantee
\eqref{eq:lp_dro_full_ball} applies to $\bbP_{\mathrm{test}}$
with probability at least $1-\beta$, proving
\eqref{eq:pac_dro_bound}.
\end{proof}

In summary, there are two sources of uncertainty in the shifted setting.
First, the finite-sample discrepancy between the calibration data
$\widehat{\bbP}_K$ and the calibration distribution $\bbP$ can be handled
either by CP, through quantile-level inflation, or by DRO, through the
additive Wasserstein radius $r_K(\beta)$. Second, calibration-test
distribution shift can be modeled by placing $\bbP_{\mathrm{test}}$ in a
user-specified ambiguity set around $\bbP$.

Under the $W_\infty$ shift model, calibration-test shift contributes the
same additive value correction $+\eta$ to both the CP and DRO thresholds.
Under the LP shift model with parameters $(\eta,\rho)$, it contributes an
additive value correction $+\eta$ and a quantile-level correction $+\rho$.
Consequently, the LP-shifted CP estimator combines $\rho$ with the usual
CP level inflation, whereas the LP-shifted DRO estimator combines the
quantile-level correction $\rho$ with the additive value correction
$r_K(\beta)+\eta$. Table~\ref{tab:comparison_test_shift} summarizes the resulting estimators and guarantees.

%
% \marginJC{I wonder if it would be possible in Tables I and II to add 1 more column that refers to the specific result, e.g., cf. Theorem~BLA. Maybe using a smaller font in the table to make room?}
%

% \marginJC{It's slightly ugly to use "Quantile" in the table, but $\widehat Q$ in the result. Also, why $\widehat{\bbP}_K \cup \{\infty \}$ in the table but only $\widehat{\bbP}_K$ in the results?}
%

\section{Application Studies in Vision, Language, and Autonomous Driving}
\label{sec:exp_setup_cp_cls}
%
% \marginJC{Ideally, a more informative title than just "Simulations". The previous section is already titled "Numerical Experiments", which is pretty synonym with "Simulations".}
% %

% \NA{Add a brief overview of this section before jumping in.}
% %
% \marginJC{Yes to the overview! And pay attention to explaining what is it that we intend to show/visualize/display}
%

In Section~\ref{sec:examples}, we studied the behavior of the CP and DRO quantile estimators on scalar distributions. We now turn to four prediction settings in which the score distribution is induced by a model and a dataset.

First, we consider image classification (Section~\ref{subsec: image_classification}), which offers an i.i.d. baseline with a large discrete label space. Next, we evaluate the shift-aware estimators of Section~\ref{subsec:distribution_shift} on the ImageNet-C dataset (Section~\ref{subsec:dist_shift}), which adds controlled corruptions to the ImageNet validation images to obtain a test set with distribution shift. Multiple-choice question answering
(Section~\ref{subsec:language_models}) shrinks the label space to four options, so an estimator can more easily include all four, giving a valid but
uninformative prediction set. Finally, we consider a trajectory
prediction problem (Section~\ref{subsec:autonomous_driving}), in which the
discrete label set is replaced by a ball of radius $\bar\alpha$, which can
grow without bound, so no such limit exists. All four settings use $K=1000$ at $\delta=\beta=0.1$, for which Remark~\ref{rem:cp_finite_sample} gives $K_{\min}=135$ for Lemmas~\ref{lem:2} and~\ref{lem:4}, so every considered setting has sufficient calibration samples to exceed the vacuity threshold.

In each study, we report mean coverage, which tests the marginal guarantee, the two-level rate $\bbP^K\big(\bbP(R^{(0)}\le\bar\alpha)\ge 1-\delta\big)$, which tests the calibration-conditional guarantee \eqref{eq:cal_cond_goal}, and the prediction-set
size needed to attain the guarantees. We compare split CP in \eqref{eq:marginal_coverage_goal}, which targets marginal coverage only, against the calibration-conditional corrections of Lemmas~\ref{lem:2}, \ref{lem:3} and~\ref{lem:4} and against DRO (Lemma~\ref{lem:dro_quantile_refined}).

\subsection{Image Classification}
\label{subsec: image_classification}

% \marginJC{What happens in any of the applications in these subsections for low $K$, i.e., when CP becomes vacuous? Should we illustrate/discuss that too?}

\textbf{Problem setup.}
We consider $N$-class image classification with input space 
$\mathcal{X}$ and label space $\mathcal{Y}=\{1,\dots,N\}$.
A pre-trained ResNet-152 classifier $f$ outputs 
class probabilities $f(x)=(f_1(x),\dots,f_N(x))$, where 
$f_y(x)\approx \mathbb{P}(Y=y\mid X=x)$.
The dataset is split into a \emph{calibration set} 
$\mathcal{D}_{\text{cal}}=\{(X_i,Y_i)\}_{i=1}^K$ and an \emph{evaluation set} 
$\mathcal{D}_{\text{eval}}=\{(X_j^{\text{eval}},Y_j^{\text{eval}})\}_{j=1}^{n_{\text{eval}}}$, 
both obtained by a uniformly random split of the dataset, hence exchangeable.

For each $(X_i,Y_i)\in\mathcal{D}_{\text{cal}}$, define the nonconformity score
\[
R^{(i)} \;=\; 1 - f_{Y_i}(X_i),
\]
which measures how nonconforming the true label is with respect to the 
model's prediction. A small $R^{(i)}$ indicates high confidence in the 
correct class; a large value corresponds to greater uncertainty or 
misclassification.

For a new input $x$, our goal is to construct a prediction set 
$C(x)\subseteq\{1,\dots,N\}$ that contains the true label with probability 
at least $1-\delta$:
\[
\mathbb{P}\big(Y_{\text{test}} \in C(X_{\text{test}})\big) \;\ge\; 1-\delta,
\]
under the assumption that calibration and test samples are exchangeable. Coverage alone is trivially achieved by the full label set $C(x)=\{1,\dots,N\}$; the objective is therefore to attain this
coverage while keeping the prediction set \emph{as small as possible}. The prediction set
%
% \marginJC{Why conformal here? Shouldn't it simply be "the prediction set"? Later, we will determine if we do it with CP, or w/ DRO, or otherwise, no?}
%
is defined as
\[
C(x) = \big\{\, y \in \{1,\dots,N\} : f_y(x) \ge 1 - \bar{\alpha} \,\big\},
\]
where $\bar{\alpha}$ is a calibrated threshold. Intuitively, $C(x)$ 
collects all labels whose predicted probabilities exceed $1-\bar{\alpha}$, 
thereby guaranteeing the finite-sample coverage property 
$\mathbb{P}(Y_{\text{test}} \in C(X_{\text{test}})) \ge 1-\delta$, 
regardless of the underlying data distribution or classifier 
calibration~\cite{angelopoulos2021gentle}.

In the standard \emph{split conformal prediction} (SCP) framework \cite{shafer2008tutorial, angelopoulos2021gentle}, this 
threshold is the empirical quantile
\[
\bar{\alpha}_{\mathrm{SCP}} \;=\;
\Quant_{\,1-\delta}\!\big(R^{1:K},\infty\big),
\]
which calibrates the model's confidence scores to achieve the desired marginal coverage \eqref{eq:marginal_coverage_goal}. In contrast, we can also define $\bar{\alpha}$ 
according to either the \emph{calibration-conditional} conformal prediction 
in~\eqref{eq:cp_cond} and~\eqref{eq: cali_cp_lemma3}, or the 
\emph{distributionally robust} variant in~\eqref{eq: alpha_dro_bound}, 
which strengthen the coverage guarantees under calibration uncertainty.

\textbf{Experimental setup.}
We evaluate the CP and DRO estimators on the ImageNet validation set using the pre-trained ResNet-152 classifier. The dataset contains $50000$ samples. 
We randomly select $K=1000$ samples for calibration and use the remaining 
$n_{\text{eval}}=49000$ for evaluation. The target miscoverage level is $\delta=0.1$ (i.e., $0.9$ nominal coverage). For the DRO variants we report two Wasserstein radii, $r_K\in\{0.02,0.03\}$, spanning the regime where DRO transitions from compact-but-unreliable to conservative-but-bloated prediction sets.

\textbf{Results.}
We conduct $1000$ independent trials with different random splits of 
calibration and evaluation data. Figure~\ref{fig:imagenet_coverage} reports 
the mean empirical coverage across trials. As expected, all six methods
achieve marginal coverage at or above the nominal $0.9$. The DRO radii shown are design choices rather than the certified $r_K(\beta)$, so this is an empirical observation and not a verification of the guarantee. 

\begin{figure}[htb]
    \centering
    \includegraphics[width=\linewidth]{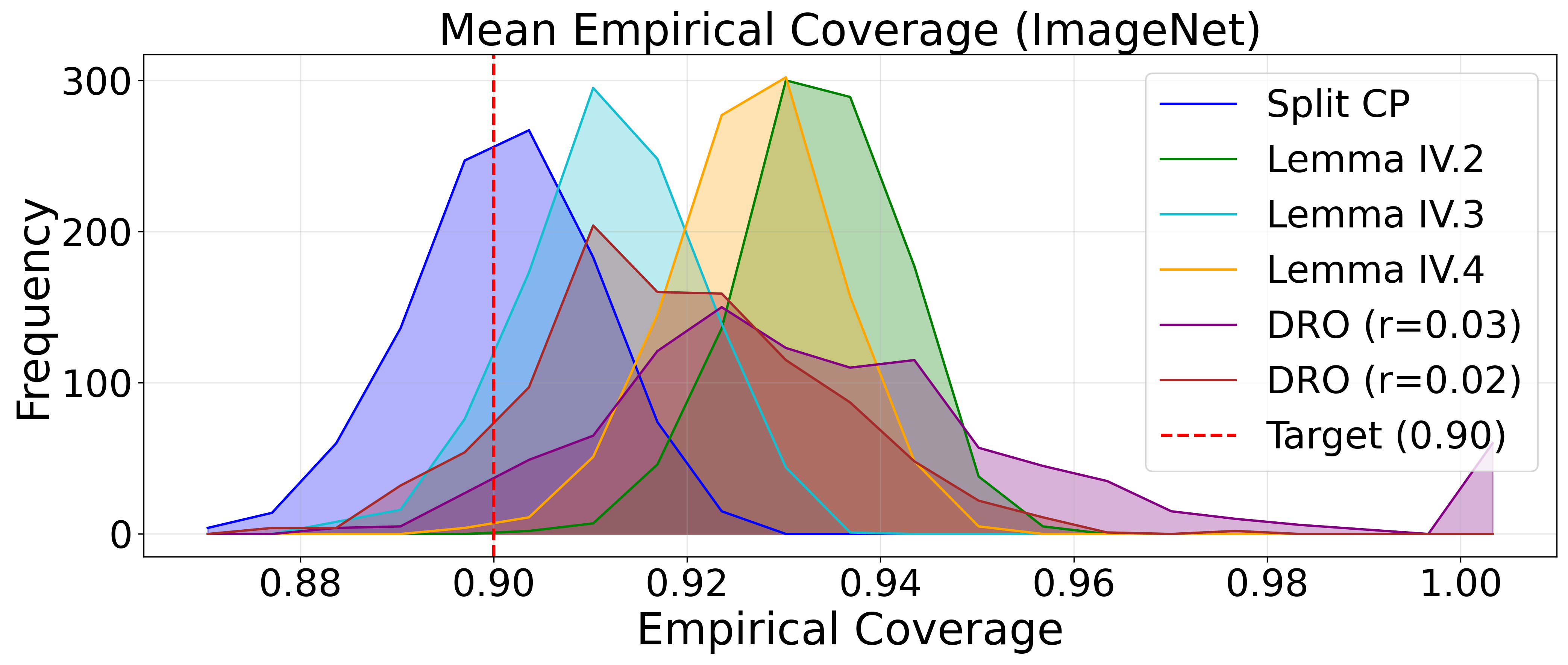}
    \caption{Mean empirical coverage across $1000$ random trials on ImageNet with $\delta = 0.1$. All methods achieve marginal coverage at or above the nominal $0.9$. Split CP is centered on the target, while the
    calibration-conditional and DRO variants concentrate above it.}
    \label{fig:imagenet_coverage}
    \vspace{-1.5ex}
\end{figure}

\begin{figure}[htb]
    \centering
    \includegraphics[width=\linewidth]{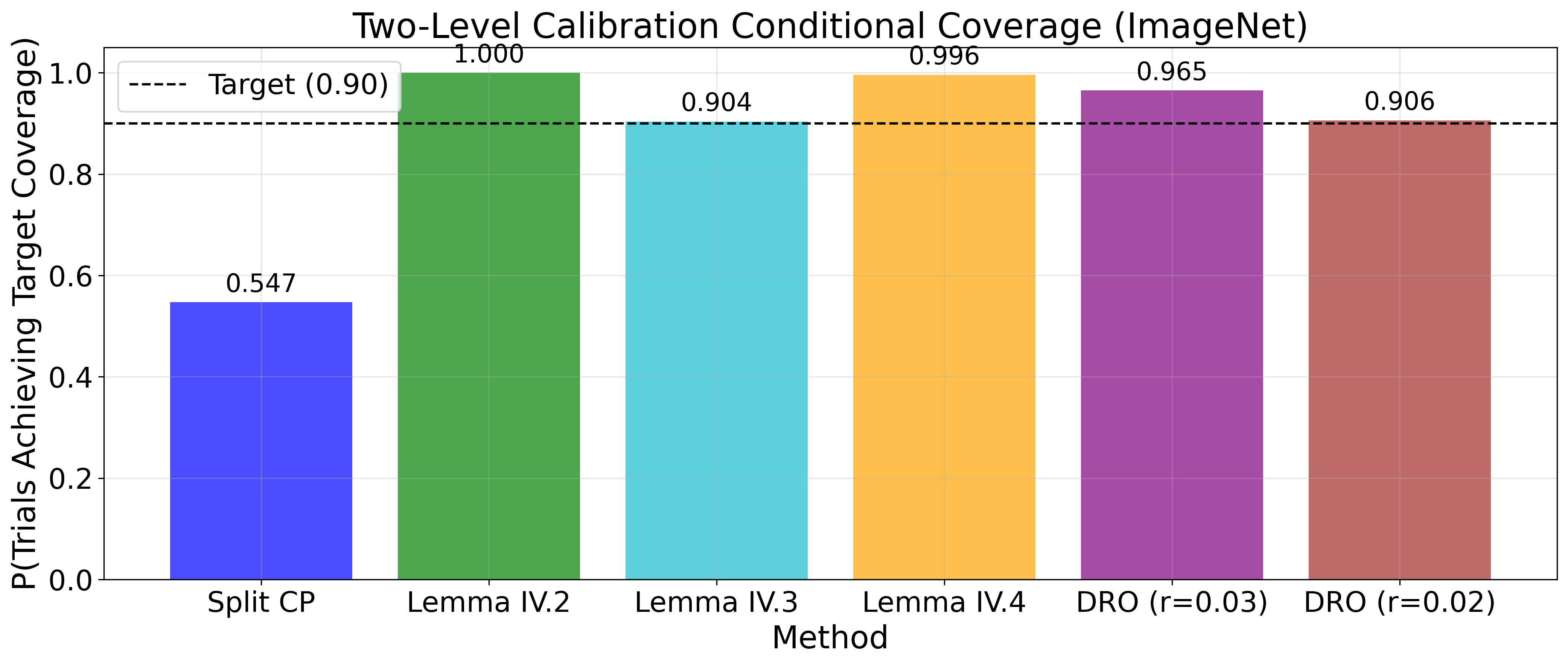}
    \caption{Two-level calibration-conditional coverage at
    $(\delta,\beta)=(0.1,0.1)$. Each of the $1000$ trials draws one calibration
    set and so yields one conditional coverage
    $\mathbb{P}(R^{(0)} \le \bar{\alpha})$; plotted is the fraction of trials for
    which this is at least $1-\delta$, which estimates $\mathbb{P}^K$ in \eqref{eq:cal_cond_goal} and must reach $1-\beta$ for the guarantee to hold.
    Split CP reaches only $0.547$, while Lemma~\ref{lem:2} and Lemma~\ref{lem:4}
    achieve $1.000$ and $0.996$, Lemma~\ref{lem:3} achieves $0.904$, and DRO requires a sufficiently large radius.}
    \label{fig:imagenet_two_level}
    \vspace{-2ex}
\end{figure}

To evaluate the stronger conditional guarantee, for each trial we test the 
two-level criterion
\[
\mathbb{P}^K\!\big(\mathbb{P}(R^{(0)} \le \bar{\alpha}(R^{1:K})) \ge 1-\delta\big) \ge 1-\beta,
\]
with $(\delta,\beta)=(0.1,0.1)$. Figure~\ref{fig:imagenet_two_level} 
highlights a key limitation of standard SCP: although it achieves the
target marginal coverage on average ($0.901$), it satisfies the two-level
guarantee with rate only $0.547$. This shortfall arises because SCP
does not account for randomness in the calibration scores, limiting its
robustness under the stronger conditional criterion. In contrast, both
calibration-conditional CP methods (Lemmas~\ref{lem:2} and~\ref{lem:4}) consistently
satisfy the two-level guarantee in essentially all trials ($1.000$ and
$0.996$), while the DRO variants require a sufficiently large radius
($r_K = 0.03$ achieves $0.965$, $r_K=0.02$ only $0.906$). Lemma~\ref{lem:3}
behaves differently from the other two corrections: its condition is exact
rather than a relaxation, so its two-level rate stays near the target $1-\beta$ ($0.904$) instead of saturating at $1$, and its sets are correspondingly smaller. Because
coverage is itself estimated on a finite evaluation set, the measured rate of
such a tight method sits at the nominal value and can fall marginally on either
side of it.

\begin{table}[htb]
    \centering
    \small
    \caption{Coverage, two-level satisfaction probability, and mean 
    prediction-set size across $1000$ trials on ImageNet 
    ($K=1000$, $\delta=0.1$, $\beta=0.1$).
    $^{*}$DRO ($r_K=0.03$) yields a vacuous prediction set (size $1000$)
    in a fraction $0.060$ of trials where the additive radius pushes the threshold
    $\bar{\alpha}$ above $1$; the reported mean and std exclude these 
    trials. Including all trials, the mean rises to $63.1$ with 
    std $236.7$.}
    \label{tab:imagenet_set_size}    
    \setlength{\tabcolsep}{4pt}
    \resizebox{\linewidth}{!}{\begin{tabular}{lccc}
    \toprule
    Method & Mean coverage & $\mathbb{P}^K(\text{cov}\geq 1{-}\delta)$ $\uparrow$ & Set size $\downarrow$ \\
    \midrule
    Split CP        & $0.901$ & $0.547$ & $1.8 \pm 0.2$ \\
    Lemma~\ref{lem:2}      & $0.934$ & $1.000$ & $2.7 \pm 0.3$ \\
    Lemma~\ref{lem:3}      & $0.912$ & $0.904$ & $2.0 \pm 0.2$ \\
    Lemma~\ref{lem:4}      & $0.927$ & $0.996$ & $2.4 \pm 0.3$ \\
    DRO ($r_K=0.03$)  & $0.935$ & $0.965$ & $3.0 \pm 1.9^{*}$ \\
    DRO ($r_K=0.02$)  & $0.919$ & $0.906$ & $2.3 \pm 0.5$ \\
    \bottomrule
    \end{tabular}}
\end{table}

Table~\ref{tab:imagenet_set_size} shows the conservativeness/robustness trade-off across the methods. The calibration-conditional CP methods (Lemmas~\ref{lem:2} and~\ref{lem:4}) emerge as the sweet spot because they satisfy the stronger two-level guarantee in essentially all trials while keeping prediction sets small (mean size $2.4$ to $2.7$ out of $1000$ classes), while
Lemma~\ref{lem:3} gives up that margin for sets of mean size $2.0$. Split CP
gives the smallest sets (mean $1.8$) but fails the two-level guarantee
at rate $0.547$. The DRO variant $r = 0.02$ is
similarly compact and clears the criterion only marginally ($0.906$).
DRO with $r = 0.03$ satisfies the guarantee with rate $0.965$ and a
comparable mean set size on non-degenerate trials ($3.0$), but in a fraction
$0.060$ of trials the calibrated threshold $\bar{\alpha}$ exceeds $1$ and the
prediction set degenerates to all $1000$ classes.

This degenerate behavior reflects a structural limitation of the 
additive form $\bar{\alpha}_{\text{DRO}} = \bar{\alpha}_{\text{emp}} + r$:
the nonconformity scores are bounded in $[0,1]$, so if the empirical
%
% \marginJC{Aren't we missing here "if" so that this reads "so IF the empirical..."? }
%
quantile $\bar{\alpha}_{\text{emp}}$ is already close to the upper end  of its range under moderate calibration noise then an additive offset $r$ can push the threshold past $1$, at which point any class is admitted into the prediction set. Clipping $\bar{\alpha}$ at $1$ does not help because the prediction set $C(x)=\{y: f_y(x)\ge 1-\bar{\alpha}\}$ then admits every label, so it remains vacuous. A remedy would instead need a data-adaptive radius that scales with the
local score density near the $(1-\delta)$-quantile, with its own validity argument. 

Overall, this experiment shows that calibration-conditional CP is the most efficient route to satisfying the two-level guarantee under i.i.d. data.

\subsection{Image Classification Under Distribution Shift}
\label{subsec:dist_shift}
%
% \marginJC{Title here should be something like "Image Classification under Distribution Shift"?}
%

\textbf{Problem setup.}
We next test robustness under distribution shift using ImageNet-C
\cite{hendrycks2019robustness}, which applies controlled corruptions to the
ImageNet validation images. We use three of its corruption families spanning distinct
mechanisms: Gaussian noise, motion blur, and fog, each at the five severity levels defined by ImageNet-C~\cite{hendrycks2019robustness}, illustrated in Figure~\ref{fig:corruption_examples}. 
\begin{figure}[htb]
    \centering
    \includegraphics[width=\linewidth]{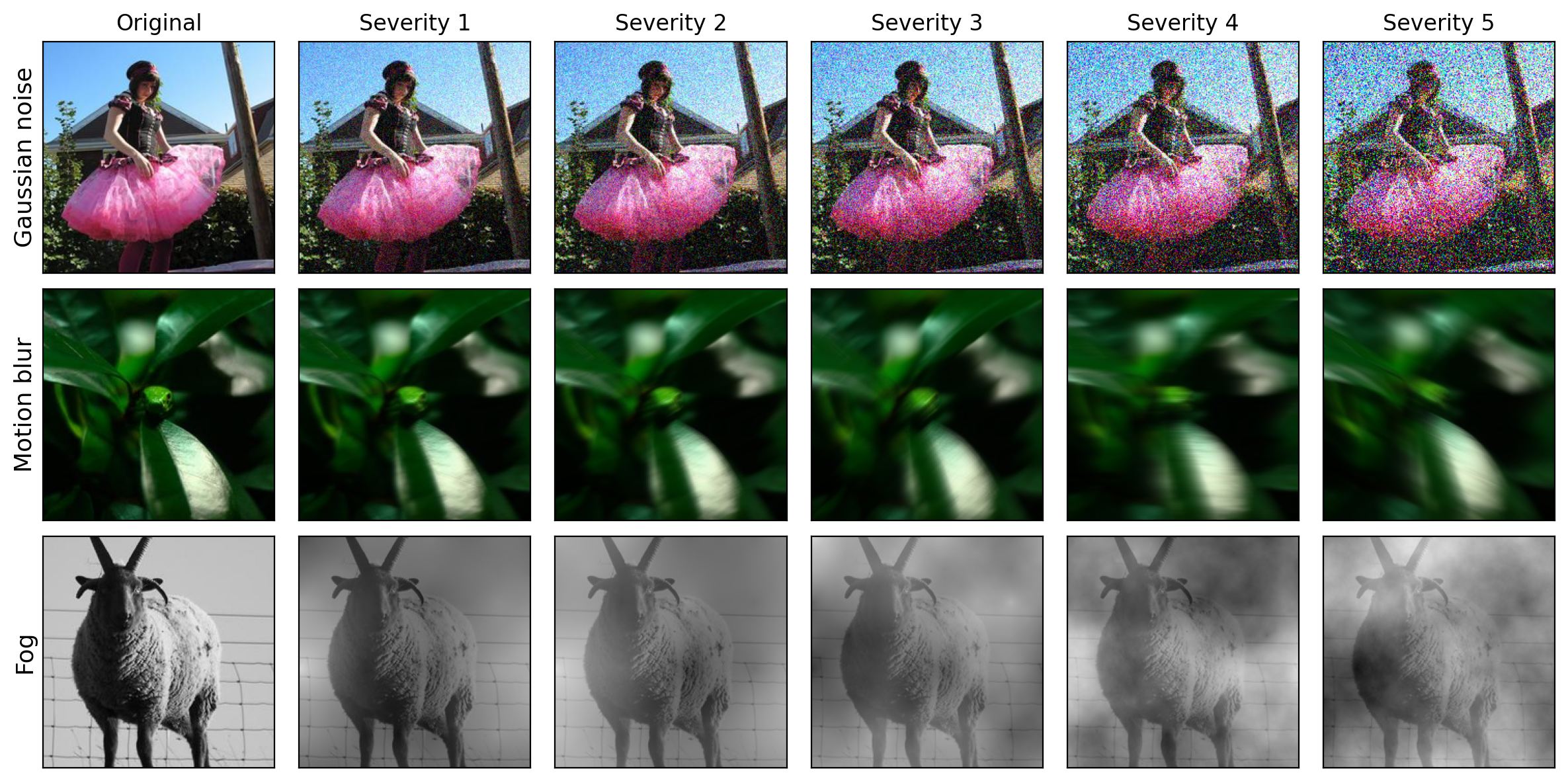}
    \caption{ImageNet-C corruptions used in our experiments. Each row applies one corruption family to a clean validation image (leftmost) at increasing
    severity ($1$ to $5$). Severity controls the magnitude of the shift between
    the clean calibration distribution and the corrupted evaluation distribution.}
    \label{fig:corruption_examples}
\end{figure}

\begin{figure*}[htb]
    \centering
    \includegraphics[width=\linewidth]{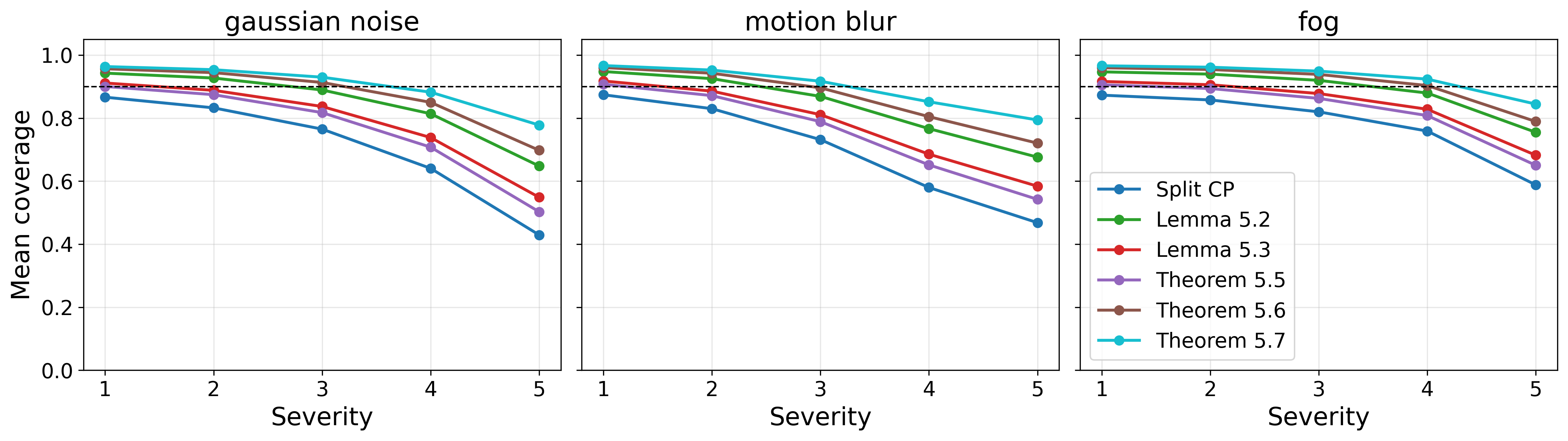}
    \caption{Mean coverage on ImageNet-C versus severity ($\eta=0.002$,
    $\rho=0.020$), for Gaussian noise, motion blur, and fog. Split CP degrades
    monotonically below the $0.90$ target; the shift-aware methods hold higher
    coverage, highest for Theorem~\ref{thm:lp_dro_shift} and then
    Theorem~\ref{thm:pac_levy}. The margin of
    Theorem~\ref{thm:lp_dro_shift} is partly obtained with vacuous sets
    (Table~\ref{tab:imagenet_c_shift}).}
    \label{fig:imagenet_c_graded}
\end{figure*}

\begin{figure*}[htb]
    \centering
    \includegraphics[width=\linewidth]{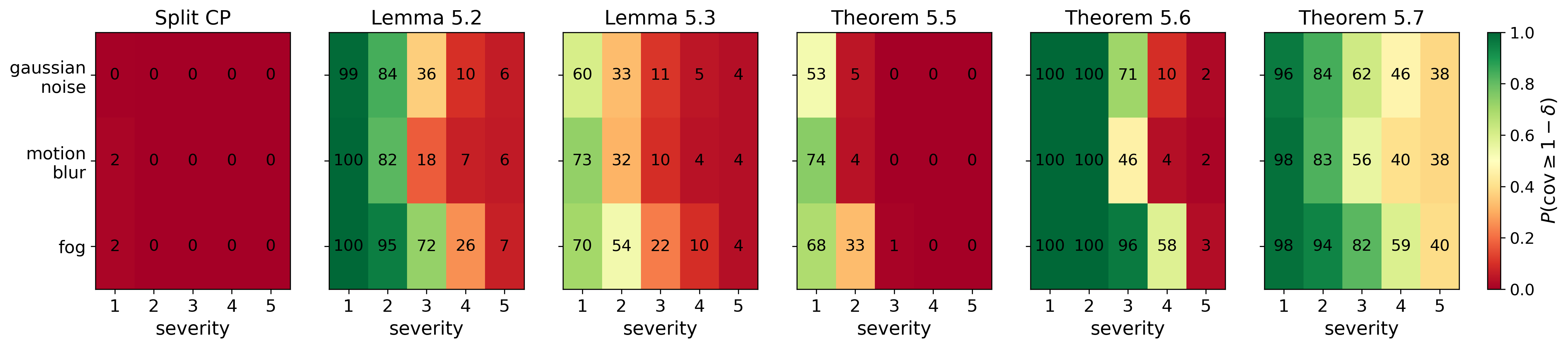}
    \caption{Two-level guarantee probability
    $\mathbb{P}^K(\mathbb{P}_{\mathrm{test}}\{R^{(0)}\le\bar\alpha\}\ge 1-\delta)$
    over the $3\times5$ (corruption, severity) grid ($\eta=0.002$, $\rho=0.020$).
    Green indicates the guarantee is met in nearly all trials. Split CP fails everywhere under shift. Theorem~\ref{thm:pac_levy} holds the guarantee at low severity and degrades gracefully. The marginal method
    (Theorem~\ref{thm:lp_robust_cp}) does not satisfy the two-level criterion, as expected of a marginal guarantee. Theorem~\ref{thm:lp_dro_shift} degrades the most slowly of all, retaining roughly $0.4$ at the highest severity, but does so by returning vacuous sets.}
    \label{fig:imagenet_c_heatmap}
\end{figure*}

%
% \marginJC{How about also checking the performance of our shift-unaware estimators? I bet DRO would still well -- depending on the conservativeness of the radius.}
%
\textbf{Experimental setup.}
In each trial, we draw $K=1000$ clean images for calibration and use the corrupted versions of the remaining $n_{\text{eval}}=49000$ for evaluation. We keep $\delta=0.1$, $\beta=0.1$, and report $200$ independent trials.
We compare the five shift-aware estimators of
Table~\ref{tab:comparison_test_shift}: CP under $W_\infty$ shift
(Lemma~\ref{lem:cp_shift}), DRO under $W_\infty$ shift
(Lemma~\ref{lem:dro_shift}), the marginal LP-robust CP of
\cite{aolaritei2025conformal} (Theorem~\ref{thm:lp_robust_cp}), and our
calibration-conditional extensions under LP shift
(Theorem~\ref{thm:pac_levy} and its DRO counterpart
Theorem~\ref{thm:lp_dro_shift}), together with split CP as a no-adjustment baseline.

The two shift models take different budgets. For the $W_\infty$ methods (Lemmas~\ref{lem:cp_shift} and~\ref{lem:dro_shift}) we set $\eta=0.008$. For the LP methods (Theorems~\ref{thm:lp_robust_cp}, \ref{thm:pac_levy} and~\ref{thm:lp_dro_shift}) we set $\eta=0.002$ and $\rho=0.020$, chosen so that Theorem~\ref{thm:lp_robust_cp} attains $1-\delta$ marginal coverage at severity~1. The $W_\infty$ budget is larger because it has no $\rho$ to absorb the level part of the shift.

\textbf{Results.}
Figure~\ref{fig:imagenet_c_graded} reports mean coverage versus severity, and
Figure~\ref{fig:imagenet_c_heatmap} the two-level guarantee probability for all three corruptions and five severities. Split CP, which has no shift mechanism, degrades sharply:
its mean coverage falls from $0.87$ at severity~1 to $0.50$ at severity~5
(Gaussian noise), and it satisfies the two-level guarantee in essentially no
shifted cell. The shift-aware methods hold higher coverage. Among them, Theorem~\ref{thm:pac_levy} is the most robust, maintaining the two-level guarantee at severity~1--2 across all three corruptions.

Table~\ref{tab:imagenet_c_shift} details a representative mild cell (motion blur, severity~1). Two points stand out. First, the marginal method
(Theorem~\ref{thm:lp_robust_cp}) achieves its target marginal coverage
($0.907$) with small sets (median $2.5$), but satisfies the stronger
two-level criterion with rate only $0.745$, which illustrates the gap between marginal robustness and the calibration-conditional guarantee. Second, Theorem~\ref{thm:pac_levy} closes this gap by satisfying the two-level guarantee in all trials with usable prediction sets (median
$6.8$ of $1000$ classes, $0.000$ vacuous). Its DRO counterpart (Theorem~\ref{thm:lp_dro_shift}) shows the limit of the value-space route on a bounded score: it applies the level correction $\rho$ and the additive radius
together, which drives $\bar{\alpha}$ past $1$ in $34\%$ of trials even at the reduced radius $r_K=0.02$, so its median set of $10.7$ classes is obtained at the cost of frequent degeneracy. On the continuous nuScenes score, the same estimator is instead the tightest of the shift-aware methods (Table~\ref{tab:nuscenes_shift}).

\begin{figure*}[htb]
    \centering
    \includegraphics[width=\linewidth]{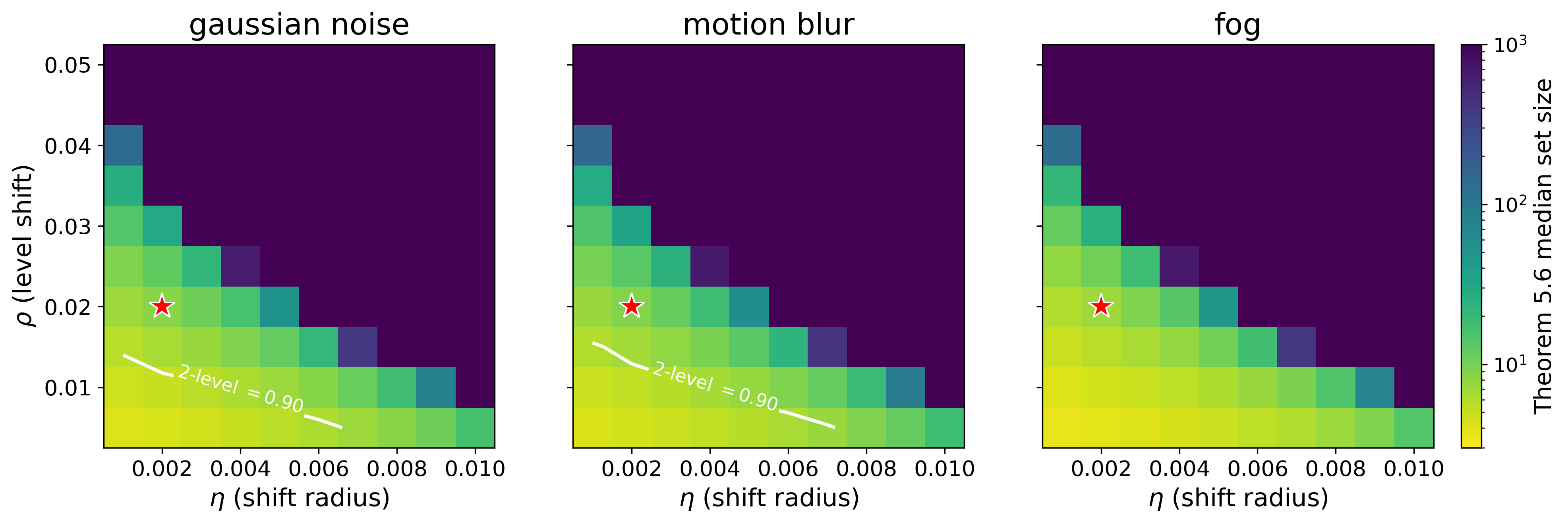}
    \caption{Effect of the LP ambiguity-set parameters $(\eta,\rho)$ on
    Theorem~\ref{thm:pac_levy} at severity~2, for each corruption. Color is the
    median prediction-set size (log scale); the white curve marks where the measured two-level rate equals $1-\beta$; the red star marks $(\eta,\rho)=(0.002,0.020)$, just above the curve in the low-set-size region. The measured rate exceeds $1-\beta$ above this curve but the set size grows steeply with $\eta$ (the additive radius saturates
    the threshold toward the all-class set) and more gently with $\rho$ at small
    $\eta$. For fog, the guarantee is already met at the smallest budget shown, so its curve lies below the plotted range and no contour appears.}
    \label{fig:imagenet_c_rrho}
\end{figure*}

Figure~\ref{fig:imagenet_c_rrho} examines how the LP parameters $(\eta,\rho)$ affect Theorem~\ref{thm:pac_levy} across the three corruptions at severity~2. The measured two-level rate exceeds $1-\beta$ above the white curve but the median prediction-set size grows steeply with the Wasserstein radius $\eta$. This occurs because the additive value correction pushes the threshold toward the all-class set. In contrast, the set size increases relatively gently with the level parameter $\rho$ when $\eta$ is small. Robustness is therefore most efficiently supplied through $\rho$ while keeping $\eta$ small. This contrasts with the pure $W_\infty$ methods, whose only lever is $\eta$, and explains their higher set sizes and vacuity in Table~\ref{tab:imagenet_c_shift}.

\begin{table}[t]
    \centering
    \small
    \caption{Results at motion blur severity~1 ($K=1000$, $\delta=0.1$, $\beta=0.1$, $200$ trials). The two shift models carry their own budgets, recorded in the $(\eta,\rho)$ column. ``Mean coverage'' is the average coverage over trials; $\mathbb{P}^K(\text{cov}\geq 1{-}\delta)$ is the
    calibration-conditional success rate
    $\mathbb{P}^K(\mathbb{P}_{\mathrm{test}}(R^{(0)}\le\bar\alpha)\ge1-\delta)$;
    ``Set size'' is the median prediction-set size; ``Vac.''\ is the fraction
    of trials whose threshold $\bar\alpha$ exceeds $1$. The
    marginal method (Theorem~\ref{thm:lp_robust_cp}) attains marginal coverage ($0.907$) but not the two-level guarantee ($0.745$). Theorem~\ref{thm:pac_levy} attains both with usable sets. Its DRO
    counterpart (Theorem~\ref{thm:lp_dro_shift}) meets the guarantee only
    degenerately on this bounded score: stacking the level correction $\rho$ on
    the additive radius still pushes $\bar\alpha$ above $1$ in $34\%$ of trials.
    The DRO radius is set to $r_K=0.02$, carried over from
    Table~\ref{tab:imagenet_set_size} as the smallest i.i.d.\ radius meeting the
    two-level guarantee (it is a design choice in the sense of
    Section~\ref{sec:choosing_radius}, not the certified value).}
    \label{tab:imagenet_c_shift}    
    \setlength{\tabcolsep}{4pt}
    \resizebox{\linewidth}{!}{\begin{tabular}{llcccc}
    \toprule
    Method & $(\eta,\rho)$ & Mean coverage & $\mathbb{P}^K(\text{cov}\geq 1{-}\delta)$ $\uparrow$ & Set size $\downarrow$ & Vac.\ $\downarrow$ \\
    \midrule
    Split CP                       & -- & $0.874$ & $0.015$ & $1.8$ & $0.000$ \\
    \midrule
    \multicolumn{6}{l}{\emph{$W_\infty$ shift model} (Assumption~\ref{assump: test_shift_wasserstein})} \\
    Lemma~\ref{lem:cp_shift}       & $(0.008,-)$ & $0.948$ & $1.000$ & $4.4$ & $0.030$ \\
    Lemma~\ref{lem:dro_shift}      & $(0.008,-)$ & $0.918$ & $0.730$ & $2.7$ & $0.035$ \\
    \midrule
    \multicolumn{6}{l}{\emph{L\'evy--Prokhorov shift model} (Assumption~\ref{assump:test_shift_LP})} \\
    Theorem~\ref{thm:lp_robust_cp} & $(0.002,0.02)$ & $0.907$ & $0.745$ & $2.5$ & $0.000$ \\
    Theorem~\ref{thm:pac_levy}     & $(0.002,0.02)$ & $0.960$ & $1.000$ & $6.8$ & $0.000$ \\
    Theorem~\ref{thm:lp_dro_shift} & $(0.002,0.02)$ & $0.967$ & $0.980$ & $10.7$ & $0.340$ \\
    \bottomrule
    \end{tabular}}
    \vspace{-2ex}
\end{table}

Distribution shift sharpens the i.i.d. findings of Section~\ref{subsec: image_classification}. Split CP loses coverage outright. The marginal LP-robust method achieves marginal coverage but not the calibration-conditional guarantee, while Theorem~\ref{thm:pac_levy} delivers the two-level guarantee with usable sets at mild-to-moderate corruption. As severity in the distribution shift grows, all methods degrade. A fixed $(\eta,\rho)$ budget eventually cannot absorb the shift, and a larger budget raises the threshold and so enlarges the prediction sets. Matching the budget to the anticipated shift magnitude, thus, trades efficiency for robustness, with Theorem~\ref{thm:pac_levy} being at the most favorable point of this trade-off.

\subsection{Language Model Question Answering}
\label{subsec:language_models}

\textbf{Problem setup.}
We next apply the uncertainty quantification methods to a multiple-choice question answering task using a language model on the MMLU dataset \cite{hendryckstest2021}. We evaluate on the standard test split of $14042$ questions across $57$ academic subjects, each with four answer options $\mathcal{Y}=\{A,B,C,D\}$. A pre-trained Qwen2.5-7B-Instruct model~\cite{yang2024qwen25} reads each question and
its four options and produces a probability $f_y(x)$ over the options from the
next-token logits of the choice letters. This model attains $71.7\%$ top-1 accuracy. As before, we define the nonconformity score as
\[
R^{(i)} = 1 - f_{Y_i}(X_i),
\]
and the prediction set as $C(x)=\{y\in\mathcal{Y}: f_y(x)\ge 1-\bar\alpha\}$.

\textbf{Experimental setup.}
We pool all $14042$ questions and, in each trial, draw $K=1000$ for
calibration and use the remaining $n_{\text{eval}}=13042$ for evaluation, so the calibration and test samples are exchangeable. We set
$\delta=0.1$ and $\beta=0.1$, and report $1000$ independent trials. We compare
Split CP, the three calibration-conditional CP variants Lemmas~\ref{lem:2},
\ref{lem:3} and~\ref{lem:4}), and the $W_\infty$-DRO threshold at radii $r_K\in\{0.002,0.003\}$. Because the four-option scores concentrate near
$1$, the DRO radii here are an order of magnitude smaller than in the ImageNet
study. 

\textbf{Results.}
Figure~\ref{fig:mmlu_coverage} reports the mean empirical coverage across trials and
Figure~\ref{fig:mmlu_two_level}  the two-level guarantee probability. As in the image setting, every method attains marginal coverage at or above the nominal level of $0.9$. The two-level guarantee, however, separates the methods sharply (Table~\ref{tab:mmlu}). Split CP, which controls marginal but not calibration-conditional coverage, attains the target marginal coverage ($0.901$) but satisfies the two-level criterion with rate only $0.557$. The calibration-conditional methods (Lemma~\ref{lem:2}, Lemma~\ref{lem:4}) satisfy it in essentially all trials while keeping the prediction sets compact (mean size $\approx 2.0$ of $4$ options), and Lemma~\ref{lem:3} again tracks the nominal level ($0.891$) with the smallest
calibration-conditional set ($1.84$). The DRO
threshold interpolates between these regimes as $r_K$ grows: $r_K=0.002$ reaches
$0.887$ and $r_K=0.003$ reaches $0.955$. This
improvement, however, comes from a constant additive radius that inflates
every prediction set uniformly. Already at $r_K=0.003$, DRO returns the full four-option set in a fraction $0.160$ of trials (Table~\ref{tab:mmlu}), whereas the
calibration-conditional methods are never vacuous. The same additive-radius
limitation appears in the image experiments
(cf. Sections~\ref{subsec: image_classification} and~\ref{subsec:dist_shift}): a
constant $r_K$ cannot adapt to the score distribution, and calibration-conditional
CP attains the two-level guarantee with smaller sets. 

\begin{figure}[htb]
    \centering
    \includegraphics[width=\linewidth]{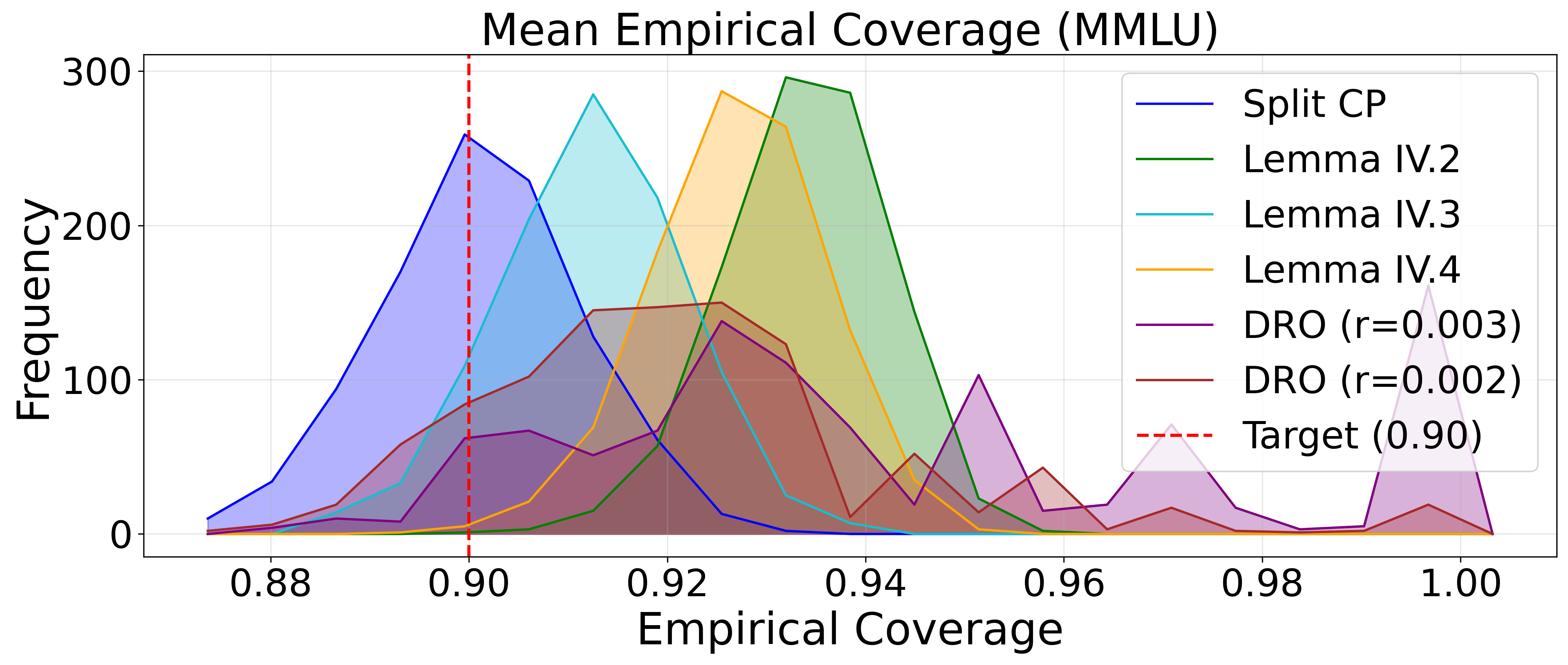}
    \caption{Mean empirical coverage across $1000$ random calibration/evaluation
    splits of MMLU with $\delta=0.1$. All methods achieve marginal coverage at or above the nominal $0.9$. Split CP concentrates on the target while the calibration-conditional and DRO variants concentrate above it.}
    \label{fig:mmlu_coverage}
\end{figure}

\begin{figure}[htb]
    \centering
    \includegraphics[width=\linewidth]{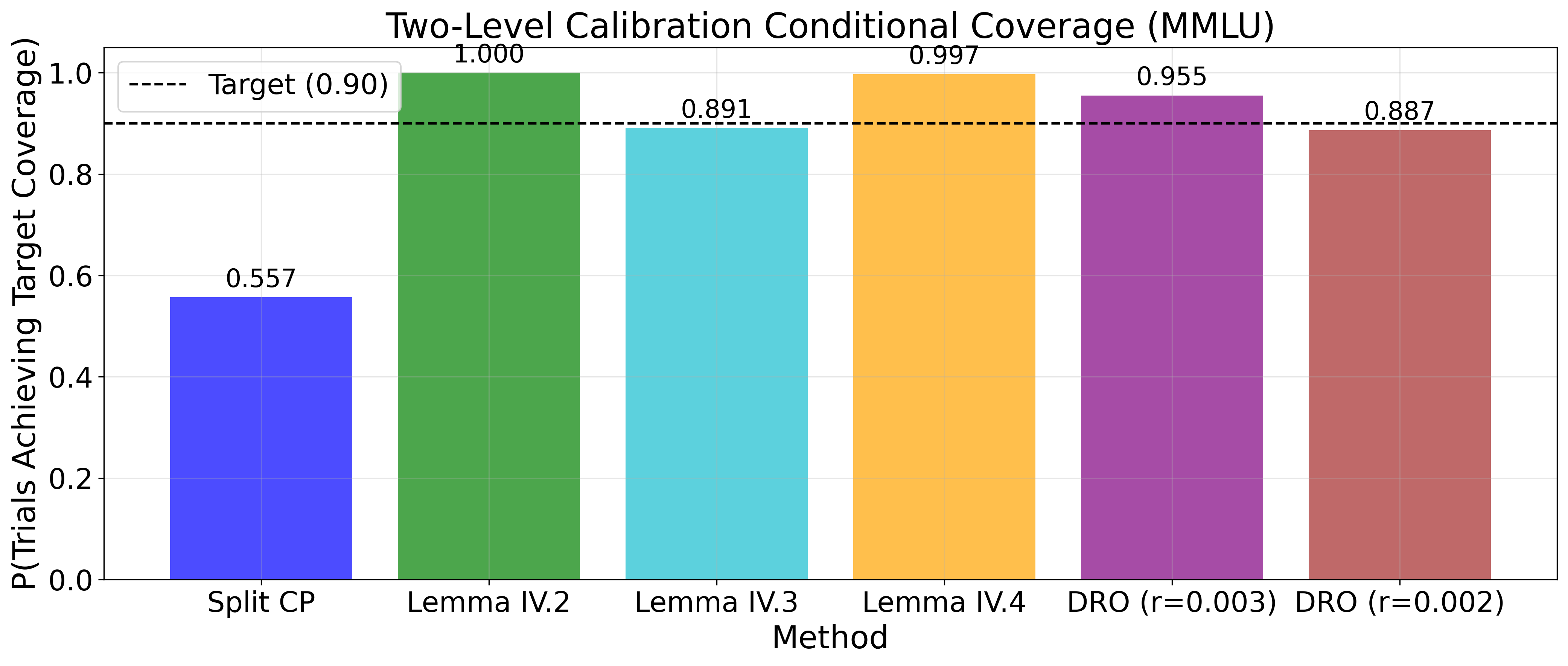}
    \caption{Two-level calibration-conditional coverage on MMLU at
    $(\delta,\beta)=(0.1,0.1)$: the fraction of the $1000$ trials whose
    calibration set attains conditional coverage
    $\mathbb{P}(R^{(0)}\le\bar\alpha)\ge1-\delta$, which estimates $\mathbb{P}^K$ in \eqref{eq:cal_cond_goal} and must reach $1-\beta$ for the guarantee to hold. Split CP reaches only $0.557$,
    Lemma~\ref{lem:2} and Lemma~\ref{lem:4} achieve $1.000$, Lemma~\ref{lem:3} tracks the
    nominal level, and DRO requires a sufficiently large radius.}
    \label{fig:mmlu_two_level}
\end{figure}

\begin{table}[htb]
    \centering
    \small
    \caption{MMLU multiple-choice QA with Qwen2.5-7B-Instruct ($K=1000$,
    $\delta=0.1$, $\beta=0.1$, $1000$ trials, i.i.d.). ``Set size''
    is the mean $\pm$ std prediction-set size, out of four options. Split CP
    attains marginal but not two-level coverage; the calibration-conditional
    methods (Lemma~\ref{lem:2}, Lemma~\ref{lem:4}) attain the two-level guarantee with compact
    sets and Lemma~\ref{lem:3} tracks the nominal level with smaller sets still. $^{*}$The constant additive DRO radius returns the full four-option (vacuous) set in fractions $0.019$ and $0.160$ of trials at $r_K=0.002$ and $r_K=0.003$ respectively; as in Table~\ref{tab:imagenet_set_size}, the reported mean and std exclude these trials. Including all trials they are $1.97 \pm 0.34$ and $2.38 \pm 0.75$.}
    \label{tab:mmlu}    
    \setlength{\tabcolsep}{5pt}
    \resizebox{\linewidth}{!}{\begin{tabular}{lccc}
    \toprule
    Method & Mean coverage & $\mathbb{P}^K(\text{cov}\geq 1{-}\delta)$ $\uparrow$ & Set size $\downarrow$ \\
    \midrule
    Split CP        & $0.901$ & $0.557$ & $1.76 \pm 0.07$ \\
    Lemma~\ref{lem:2}      & $0.934$ & $1.000$ & $2.04 \pm 0.08$ \\
    Lemma~\ref{lem:3}      & $0.912$ & $0.891$ & $1.84 \pm 0.07$ \\
    Lemma~\ref{lem:4}      & $0.927$ & $0.997$ & $1.97 \pm 0.08$ \\
    DRO ($r_K=0.003$) & $0.944$ & $0.955$ & $2.07 \pm 0.27^{*}$ \\
    DRO ($r_K=0.002$) & $0.922$ & $0.887$ & $1.93 \pm 0.19^{*}$ \\
    \bottomrule
    \end{tabular}}
\end{table}

\subsection{Autonomous Driving Trajectory Prediction}
\label{subsec:autonomous_driving}

\textbf{Problem setup.}
Our final experiment moves from discrete classification to a
\emph{continuous-output} task: vehicle trajectory prediction on
nuScenes~\cite{caesar2020nuscenes}. For each agent, we observe $2$\,s of history and predict its position over a $6$\,s horizon with a constant velocity-and-heading baseline, which is the standard physics model of the nuScenes prediction
challenge \cite{phan2020covernet}. We set the nonconformity score to the final displacement error in meters,
\[
R^{(i)} = \big\| \hat{s}_T^{(i)} - s_T^{(i)} \big\|_2,
\]
the distance between the predicted and true position at horizon $T=6$\,s. The prediction set is now a ball of radius $\bar{\alpha}$ around the predicted endpoint, so the analogue of prediction-set size is the calibrated \emph{radius} in meters. Unlike the discrete label sets of Sections~\ref{subsec: image_classification}--\ref{subsec:language_models}, there
is no finite label space to exhaust. The additive DRO radius simply grows the ball, so the prediction set is never vacuous.

\textbf{Experimental setup.}
We score $49787$ agent trajectories from the nuScenes prediction challenge with the
constant-velocity-and-heading model. The final displacement error has a median of $9.4$\,m, a mean of $11.5$\,m, and
a $90$th percentile of $23.8$\,m. In each of the $1000$ trials, we draw $K=1000$ trajectories for calibration and use the remaining $n_{\text{eval}}=48787$ for evaluation, with $\delta=0.1$ and $\beta=0.1$. We compare Split CP, the three calibration-conditional CP variants, and $W_\infty$-DRO at additive radii $r_K\in\{1,3\}$\,m.

\textbf{Results.}
Figure~\ref{fig:nuscenes_coverage} reports the mean empirical coverage across the trials,
Figure~\ref{fig:nuscenes_two_level}  the two-level guarantee probability, and
Table~\ref{tab:nuscenes} the calibrated radius. The pattern matches the classification and language-model experiments: every method attains marginal coverage at or above the nominal $0.9$ but Split CP, which controls only marginal coverage, satisfies the two-level criterion with rate only $0.554$. The calibration-conditional methods (Lemma~\ref{lem:2}, Lemma~\ref{lem:4}) satisfy it
in essentially all trials with radii of $26$--$27$\,m, while Lemma~\ref{lem:3} tracks the
nominal level with a $24.8$\,m ball, and DRO reaches it once
$r\ge 3$\,m. The key difference from the discrete settings is the absence of vacuity. As the DRO radius grows from $1$ to $3$\,m the calibrated ball grows from $24.9$ to $26.9$\,m and never degenerates. 
%Here the additive Wasserstein radius is the natural object, a safety margin in meters.
%  rather than the blunt instrument it becomes on a bounded score.

Figure~\ref{fig:nuscenes_tube} shows the physical scale of the calibrated margin for a single turning agent. The constant velocity-and-heading prediction extrapolates straight and misses the turn, while the calibrated tube (per-step radius growing from $\approx 1$\,m at $0.5$\,s to $\approx 27$\,m at $6$\,s) still contains the true trajectory.
%, i.e., coverage expressed as a physical margin.

\begin{figure}[htb]
    \centering
    \includegraphics[width=\linewidth]{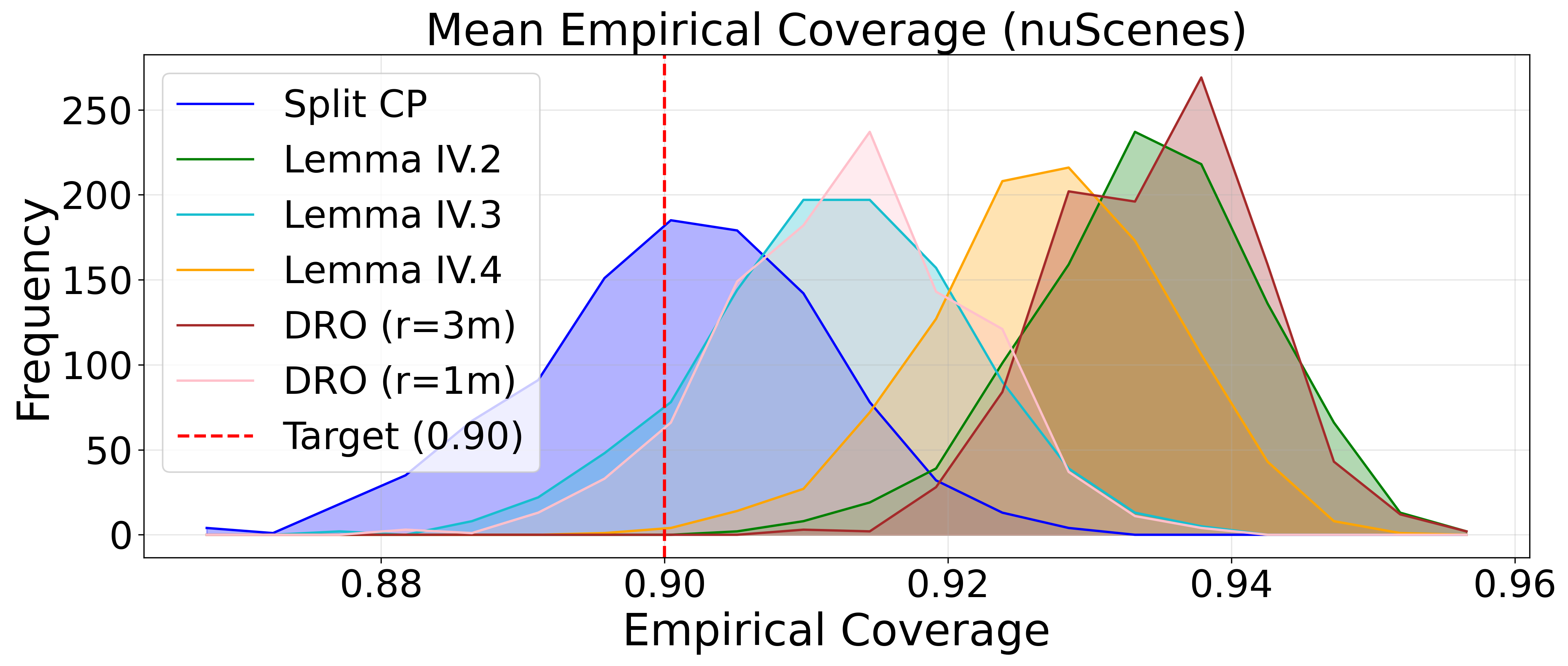}
    \caption{Mean empirical coverage across $1000$ random calibration/test
    splits of the nuScenes prediction set ($\delta=0.1$). All methods achieve
    marginal coverage at or above the nominal $0.90$; Split CP concentrates on the target while the calibration-conditional and DRO variants concentrate above it.}
    \label{fig:nuscenes_coverage}
\end{figure}

\begin{figure}[htb]
    \centering
    \includegraphics[width=\linewidth]{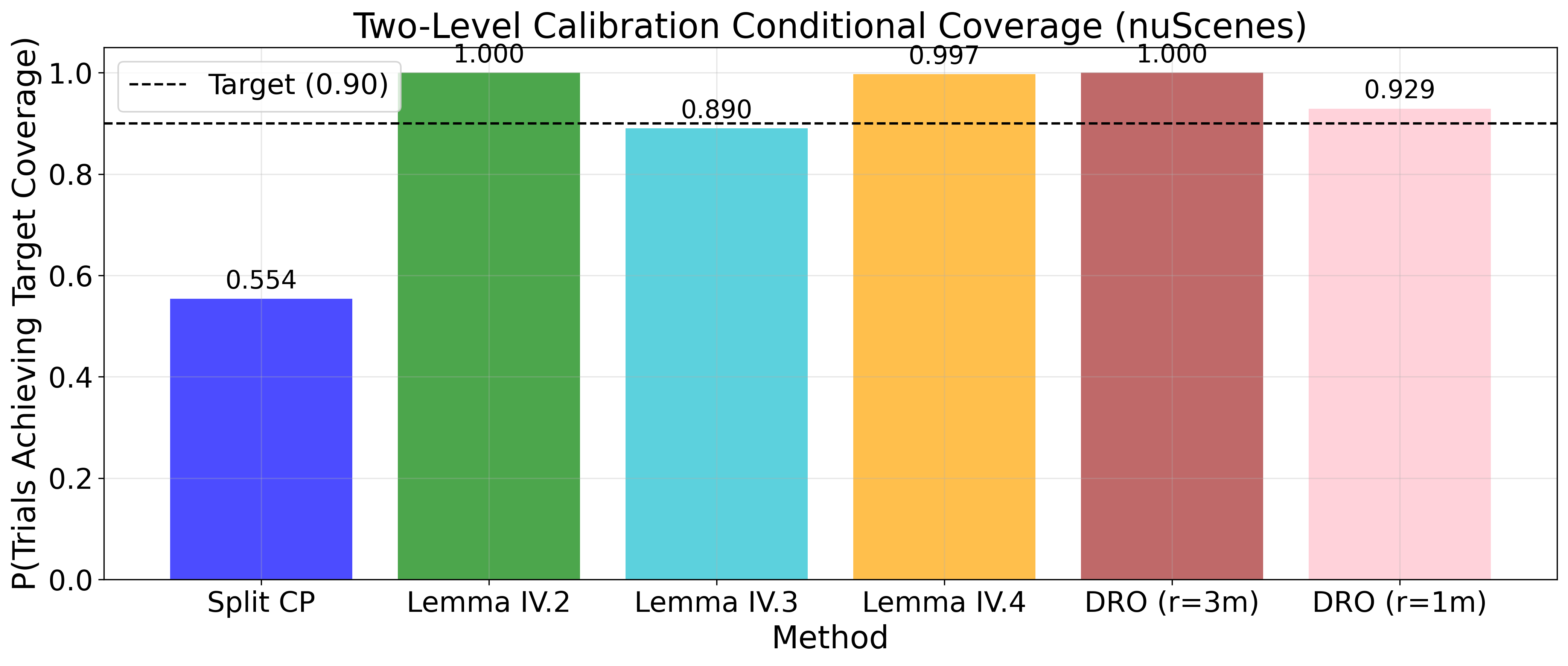}
    \caption{Two-level calibration-conditional coverage on nuScenes trajectory
    prediction (i.i.d., $\delta=\beta=0.1$, $1000$ trials). Plotted is the
    fraction of trials whose calibration set attains conditional coverage at least
    $1-\delta$, which estimates $\mathbb{P}^K$ in \eqref{eq:cal_cond_goal} and must reach $1-\beta$ for the guarantee to hold. Split CP reaches only $0.554$, Lemma~\ref{lem:2} and Lemma~\ref{lem:4} reach $1.000$ and $0.997$, Lemma~\ref{lem:3} reaches $0.890$, and DRO reaches the level once $r\ge 3$\,m.}
    \label{fig:nuscenes_two_level}
\end{figure}

\begin{table}[htb]
    \centering
    \small
    \caption{nuScenes trajectory prediction ($K=1000$, $\delta=0.1$, $\beta=0.1$,
    $1000$ trials, i.i.d.). ``Set size'' is the calibrated ball radius in meters
    (mean $\pm$ std). Split CP attains marginal but not two-level coverage; the
    calibration-conditional methods attain the two-level guarantee. Unlike the
    classification settings, the DRO radius grows smoothly with $r$ and is never
    vacuous, since the prediction region is a continuous ball rather than a
    discrete label set.}
    \label{tab:nuscenes}    
    \setlength{\tabcolsep}{4pt}
    \resizebox{\linewidth}{!}{\begin{tabular}{lccc}
    \toprule
    Method & Mean coverage & $\mathbb{P}^K(\text{cov}\geq 1{-}\delta)$ $\uparrow$ & Set size (m) $\downarrow$ \\
    \midrule
    Split CP       & $0.901$ & $0.554$ & $24.0 \pm 0.7$ \\
    Lemma~\ref{lem:2}     & $0.934$ & $1.000$ & $26.8 \pm 0.8$ \\
    Lemma~\ref{lem:3}     & $0.912$ & $0.890$ & $24.8 \pm 0.8$ \\
    Lemma~\ref{lem:4}     & $0.927$ & $0.997$ & $26.2 \pm 0.8$ \\
    DRO ($r_K=3$\,m) & $0.934$ & $1.000$ & $26.9 \pm 0.7$ \\
    DRO ($r_K=1$\,m) & $0.913$ & $0.929$ & $24.9 \pm 0.7$ \\
    \bottomrule
    \end{tabular}}
    \vspace{-2ex}
\end{table}

\begin{figure}[htb]
    \centering
    \includegraphics[width=\linewidth]{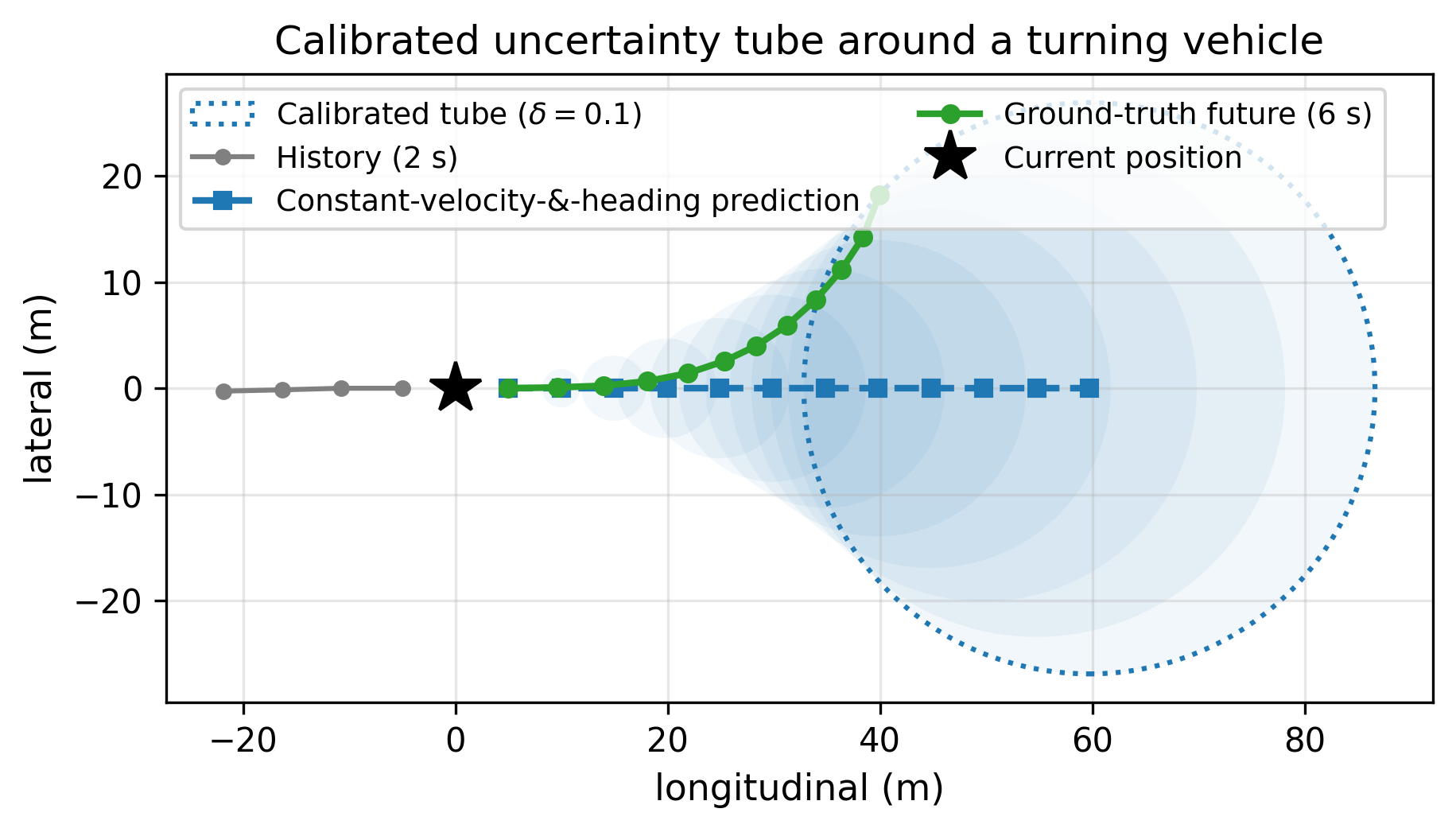}
    \caption{Calibrated uncertainty tube for a turning agent. The
    constant velocity-and-heading prediction (blue) extrapolates straight and misses the turn, while the ground-truth future (green) curves away. The per-step calibrated tube (radius growing from $\approx 1$\,m at $0.5$\,s to
    $\approx 27$\,m at $6$\,s) contains the ground-truth. The radii are calibrated separately at
    each time step at level $1-\delta$, so the figure shows the physical scale of a
    calibrated margin rather than a guarantee over the horizon.}
    \label{fig:nuscenes_tube}
    \vspace{-2ex}
\end{figure}

\begin{table}[t]
    \centering
    \small
     \caption{Maneuver shift on nuScenes ($K=1000$,
    $\delta=\beta=0.1$, $1000$ trials) with calibration on straight-driving agents and testing on turning agents. As in Table~\ref{tab:imagenet_c_shift}, each shift model has a budget $\eta$ in meters. ``Mean coverage'' is the average coverage over trials;
    $\mathbb{P}^K(\text{cov}\geq 1{-}\delta)$ is the calibration-conditional success rate
    $\mathbb{P}^K(\mathbb{P}_{\mathrm{test}}\{R^{(0)}\le\bar\alpha\}\ge1-\delta)$;
    ``Set size'' is the calibrated ball radius in meters. The
    Split CP margin under-covers the shifted population ($0.669$). The
    shift-aware estimators recover coverage, with the marginal method
    (Theorem~\ref{thm:lp_robust_cp}) attaining marginal but not two-level coverage and Theorem~\ref{thm:pac_levy} attaining both; its DRO counterpart (Theorem~\ref{thm:lp_dro_shift}) attains both with a tighter ball.
    }
    \label{tab:nuscenes_shift}
    \setlength{\tabcolsep}{4pt}
    \resizebox{\linewidth}{!}{\begin{tabular}{llccc}
    \toprule
    Method & $(\eta,\rho)$ & Mean coverage & $\mathbb{P}^K(\text{cov}\geq 1{-}\delta)$ $\uparrow$ & Set size (m) $\downarrow$ \\
    \midrule
    Split CP                       & -- & $0.669$ & $0.000$ & $20.3 \pm 0.7$ \\
    \midrule
    \multicolumn{5}{l}{\emph{$W_\infty$ shift model} (Assumption~\ref{assump: test_shift_wasserstein})} \\
    Lemma~\ref{lem:cp_shift}       & $(8,-)$ & $0.918$ & $0.966$ & $31.1 \pm 0.8$ \\
    Lemma~\ref{lem:dro_shift}      & $(8,-)$ & $0.923$ & $0.991$ & $31.5 \pm 0.8$ \\
    \midrule
    \multicolumn{5}{l}{\emph{L\'evy--Prokhorov shift model} (Assumption~\ref{assump:test_shift_LP})} \\
    Theorem~\ref{thm:lp_robust_cp} & $(5,0.05)$ & $0.905$ & $0.657$ & $30.2 \pm 0.9$ \\
    Theorem~\ref{thm:pac_levy}     & $(5,0.05)$ & $0.969$ & $1.000$ & $36.7 \pm 1.4$ \\
    Theorem~\ref{thm:lp_dro_shift} & $(5,0.05)$ & $0.941$ & $0.999$ & $33.1 \pm 1.3$ \\
    \bottomrule
    \end{tabular}}
\end{table}

\textbf{Distribution shift.}
We additionally probe two covariate shifts, calibrating on one subpopulation and testing on another. A \emph{geographic} shift, where we calibrate on Boston and test on Singapore, barely moves the score distribution (the $90$th-percentile error is $23.9$\,m versus $23.7$\,m) because constant-velocity error depends on agent dynamics rather than city. Thus, we consider a \emph{maneuver} shift, where we calibrate on straight driving and test on turning. Turning roughly doubles the prediction error, raising the $90$th-percentile from $20.2$ to $29.7$\,m. The i.i.d. Split CP margin under-covers the turning population at only $0.669$. Table~\ref{tab:nuscenes_shift} shows that the shift-aware estimators of Section~\ref{subsec:dist_shift} recover coverage, with the same marginal-versus-two-level distinction as the ImageNet-C study (Table~\ref{tab:imagenet_c_shift}): the marginal estimator (Theorem~\ref{thm:lp_robust_cp}) achieves marginal coverage ($0.905$) but satisfies the two-level guarantee with rate only $0.657$, whereas Theorem~\ref{thm:pac_levy} attains both ($0.969$, $1.000$). Because the score is continuous, the correction enlarges the safety ball
smoothly (from $20$ to $30$--$37$\,m) rather than collapsing to a vacuous label set seen in classification. The budget behaves differently here. On the bounded ImageNet-C score, set size grows far more slowly in $\rho$ than in $\eta$
(Figure~\ref{fig:imagenet_c_rrho}), whereas on this unbounded distance score every budget that achieves marginal coverage yields essentially the same ball, $30$--$31$\,m, so the two parameters are interchangeable. This is the density scaling of Section~\ref{subsec:stat_props} in the shifted setting: $\rho$ shifts the quantile level and $\eta$ shifts its value, and the score density at the threshold converts one into the other.

\subsection{Choosing an Estimator}
\label{subsec:takeaways}

This section gives guidance on which estimator to use. Although the three applications (Sections~\ref{subsec: image_classification}, \ref{subsec:language_models} and~\ref{subsec:autonomous_driving}) differ in label space and in whether the score is bounded, they rank the CP estimators identically by both set size and two-level rate, so
the same guidance applies to all three.

If only marginal coverage is required, split CP is the standard choice and gives the smallest sets everywhere. However, it does not control coverage for the particular calibration set drawn, and its two-level rate is far below $0.9$ in the three studies.
%, so about half of the calibration sets give coverage below $1-\delta$.}

Among the calibration-conditional corrections, Lemma~\ref{lem:3} gives the
smallest sets, because it solves the beta-function condition exactly, whereas Lemmas~\ref{lem:2} and~\ref{lem:4} relax it. Because Lemma~\ref{lem:3} carries no slack, its two-level rate tracks $1-\beta$ rather than exceeding it and the measured value can fall on either side. Lemmas~\ref{lem:2} and~\ref{lem:4} achieve a wide margin at the cost of larger prediction sets, and their level correction is explicit, so it can be allocated across constraints or time steps. Of the two, Lemma~\ref{lem:4} is preferable once $\delta$ falls below roughly $0.1$.

Without calibration-test distribution shift, DRO's certified radius gives the larger correction (Sections~\ref{sec:examples} and~\ref{sec:exp_setup_cp_cls}), so CP produces smaller prediction sets and is the preferable choice. DRO is nevertheless preferable in two cases: when $K$ lies below the non-vacuity thresholds, where CP returns no finite estimator at all, and when coverage must hold over an ambiguity set rather than for $\bbP$ alone. Whether the score is bounded also decides what DRO's additive correction costs: on the continuous nuScenes distance DRO matches the calibration-conditional sets ($26.9$ against $26.8$\,m at a two-level rate of $1.000$), whereas on the bounded MMLU score the same correction returns the full label set in a fraction $0.160$ of trials.

Every shift-aware estimator requires the shift budget as an input: $\eta$ for the $W_\infty$ corrections of Lemmas~\ref{lem:cp_shift} and~\ref{lem:dro_shift}, and $(\eta,\rho)$ for the LP estimators. None is assumption-free. When the shift is purely value-space, the
$W_\infty$ corrections are the tighter choice, as they do not pay for a level perturbation ($31.1$ against $36.7$\,m on nuScenes). Under the LP model, Theorem~\ref{thm:lp_robust_cp} achieves marginal coverage only ($0.905$, with a
two-level rate of $0.657$), whereas Theorem~\ref{thm:pac_levy} and Theorem~\ref{thm:lp_dro_shift} achieve both. The choice between those two again follows the score, the latter giving the tighter ball on the continuous nuScenes distance but degenerating on the bounded ImageNet-C score. How to split a given budget between $\eta$ and $\rho$ depends on the score as well: on a
bounded score $\rho$ inflates sets far more slowly than $\eta$
(Figure~\ref{fig:imagenet_c_rrho}), whereas on an unbounded distance score the
two are interchangeable, so there $\eta$ may be set directly from the
anticipated perturbation.

%
% \marginJC{B/c of the large number of experiments and tables/figures (which I like!), I wonder if it would be helpful to say/show what is better in each one (e.g., larger coverage is better, smaller set size is better, etc.) so that it is easy to compare approaches.}
%

% \begin{figure}[t]
%     \centering
%     \includegraphics[width = 0.44\textwidth]{fig/dro_calibrate_radius/cp_calibrated_dro_delta0.5_beta0.1_use_approximationTrue.png}
%     \caption{Comparison of CP and DRO quantile estimates with approximated calibrated radius in \eqref{eq: uniform_approximate_radius}.}
%     \label{fig:match_mean_approx}
% \end{figure}

% \section{Potential Directions}

% Some potential directions are presented as follows: 

% \begin{itemize}
%     \item Safe robot motion planning with trajectory data (e.g., \cite{lindemann2023safe})
%     \item Robust model-based RL / certificate function (e.g., CLF, CBF) learning under model uncertainty (e.g., \cite{long2023dro_lf, Robey_2020_learn_CBF_demonstration, Dawson_2023_TRO})
%     \item Mixed integer QP formulation for CLF-CBF based approaches under uncertainty (in states, dynamics, barreir functions and its gradients). \cite{zhao2024conformal_cc_opti, long2024sensorbased_dro}

% \end{itemize}

%% file: tex/Conclusion.tex
\section{Conclusions}
% \& Future Work
\label{sec:conclusion}

This article developed a unified probabilistic perspective on conformal prediction (CP) and Wasserstein distributionally robust optimization (DRO), viewing both as procedures that turn finite calibration data into a quantile threshold with finite-sample coverage. The two correct the empirical quantile along different coordinates: CP raises the target quantile level, whereas DRO shifts the quantile value through an ambiguity radius. Making the correspondence precise exposes an asymmetry in what each method certifies. CP's level correction is closed-form and distribution-free, whereas the radius that certifies DRO's coverage depends on a density lower bound that is typically unknown; in return, DRO certifies coverage uniformly over the ambiguity set rather than for the true distribution alone.

In the scalar case, the comparison is exact. The two estimators share the same first-order asymptotic variance and differ only in a deterministic offset. With certified radius, that offset is provably larger for DRO and is governed by a uniform lower bound on the density, whereas CP's is governed by the density at the target quantile.

In the experiments, calibration-conditional CP and DRO attain the two-level guarantee that the standard split-CP baseline fails to meet. Among the methods that meet it, CP produces the smallest prediction sets, while DRO's additive correction is well suited to a continuous score but not to a bounded score, where it can drive the prediction set to vacuity. The same distinction applies to the shift-aware estimators under both shift models. These behaviors are consistent across ImageNet classification, MMLU question answering, and nuScenes trajectory prediction.
% , clean and corrupted

Two open problems follow. The first is the DRO radius. The value that certifies coverage depends on unknown properties of the distribution, is provably the looser correction, and can saturate a bounded score. Data-driven and adaptive radii can help, but they require more than the $K$ calibration scores, such as a model for the score distribution or repeated problem instances. The second is the shift budget. Every shift-aware estimator requires it as an input, so none is assumption-free. The open problem is to estimate the budget from data and to account for the estimation error in the guarantee. Beyond these, a further open direction is to propagate the two-level guarantee from coverage into decision-making, e.g., in chance-constrained optimization, model predictive control, and planning, where a calibrated threshold would provide a closed-loop guarantee. Another direction is related to online calibration, where data arrive as a non-stationary stream and the threshold must be updated continually to preserve coverage under drift, as in autonomous systems operating over long horizons.